%
%
%


\documentclass{mcom-l}





\usepackage{latexsym, amsmath, amssymb}
\usepackage{amsthm}
\usepackage{color,tabularx,booktabs}
\usepackage{graphicx,subfigure} 
\usepackage{epstopdf}
\usepackage{multirow}
\usepackage{array}
\usepackage{booktabs}
\usepackage{caption}

\usepackage{array}

\usepackage{hyperref}
\usepackage{empheq}

\newcommand{\f}{\frac}

\newcommand{\mbf}{\mathbf}

\newcommand{\mt}{\mathcal{ T }}

\newcommand{\mo}{\mathcal{ O }}

\newcommand{\mm}{\mathcal{ M }}

\newtheorem{theorem}{Theorem}[section]
\newtheorem{lemma}[theorem]{Lemma}

\theoremstyle{definition}

\theoremstyle{remark}
\newtheorem{remark}[theorem]{Remark}


\topmargin -0.5in
\textwidth 5 in \textheight 8.5 in 

\numberwithin{equation}{section}

\begin{document}

\title[Dynamic-stabilization-based linear schemes]{Dynamic-stabilization-based linear schemes for the Allen--Cahn equation with degenerate mobility: MBP and energy stability}



\author{Hongfei Fu}
\address{School of Mathematical Sciences \& Laboratory of Marine Mathematics, Ocean University of China, Qingdao, Shandong 266100, China}
\email{fhf@ouc.edu.cn}
\thanks{The first author was supported in part by the NSFC grant 12131014, the NSF of Shandong Province grant ZR2024MA023, the Fundamental Research Funds for the Central Universities grant 202264006 and the OUC Scientific Research Program for Young Talented Professionals.}

\author{Dianming Hou}
\address{School of Mathematics and Statistics, Jiangsu Normal University, Xuzhou, Jiangsu 221116, China}
\email{dmhou@jsnu.edu.cn}
\thanks{The second author was supported in part by the NSFC grants 12001248 and 12571387, and the NSF of Jiangsu Province grant BK20201020.}
	
\author{Zhonghua Qiao}
\address{Corresponding author. Department of Applied Mathematics, The Hong Kong Polytechnic University, Hung Hom, Kowloon, Hong Kong}
\email{zqiao@polyu.edu.hk}
\thanks{The third author was supported in part by the Hong Kong Research Grants Council GRF grant 15305624, and CAS AMSS-PolyU Joint Laboratory of Applied Mathematics (Grant no. JLFS/P-501/24).}

\author{Bingyin Zhang}
\address{School of Mathematical Sciences, Ocean University of China, Qingdao, Shandong 266100, China}
\email{zhangbingyin@stu.ouc.edu.cn}
\thanks{The fourth author was supported in part by the Fundamental Research Funds for the Central Universities under grant 202461103.}

\subjclass[2020]{Primary 35K55, 65M06, 65M12, 65M15, 65M50}

\date{\today}

\dedicatory{}

\keywords{Degenerate Allen--Cahn equation, Dynamic stabilization, Maximum bound principle, Energy stability}

\begin{abstract}
In this paper, we investigate linear first- and second-order numerical schemes for the Allen--Cahn equation with a general (possibly degenerate) mobility. Compared with existing numerical methods, our schemes employ a novel dynamic stabilization approach that guarantees unconditional preservation of the maximum bound principle (MBP) and energy stability. A key advance is that the discrete energy stability remains valid even in the presence of degenerate mobility-a property we refer to as mobility robustness. Rigorous maximum-norm error estimates are also established. In particular, for the second-order scheme, we introduce a new prediction strategy with a cut-off preprocessing procedure on the extrapolation solution, and only one linear system needs to be solved per time level.
Representative numerical examples are provided to validate the theoretical findings and performance of the proposed schemes.
\end{abstract}

\maketitle

\section{Introduction}
The classic Allen–Cahn equation, first introduced by Allen and Cahn in 1979 \cite{Acta_Allen_1979}, was designed to simulate the motion of anti-phase boundaries in crystalline solids. Since its inception, it has been extensively used to model a wide array of phenomena, such as interface dynamics in multiphase fluids \cite{CiCP_2012_Kim,SISC_2010_Shen,JCP_2006_Yang}, mean curvature flow \cite{NM_2003_Feng,JDG_1993_Ilmanen}, image segmentation \cite{ANM_2004_Benes,LiuQiaoZhang}, and various applications in materials science. In numerous practical situations, the two phases present in the system (such as the components of a binary alloy) exhibit distinct physical or chemical characteristics, which naturally leads to the diffusion mobility being dependent on the phase variable \cite{SIAM_2012_Du}. Furthermore, in specific cases, accurately modeling the phase separation process necessitates that the mobility vanishes in pure phases \cite{JCP_2011_Gomez,CMS_Shen_2016}. This study focuses on the following Allen–Cahn equation with a general (potentially degenerate) mobility:
\begin{empheq}{align}
	& \phi_{t} = - \mm(\phi) \mu, & t >0, \  \mbf{x} \in \Omega , \label{Model:MAC1}\\
	& \mu = - \varepsilon^2 \Delta \phi - f( \phi ),  &  t >0, \  \mbf{x} \in \Omega,  \label{Model:MAC2}
\end{empheq}
subject to the initial condition $ \phi( \mbf{x}, 0 ) = \phi_{\text {init }} ( \mbf{x} ) $ for any $ \mbf{x} \in \bar{\Omega} $, together with either periodic or homogeneous Neumann boundary conditions. Here, $\Omega \subset \mathbb{R}^d$ $(d = 1,2,3)$ is an open, bounded Lipschitz domain. In \eqref{Model:MAC1}--\eqref{Model:MAC2}, $\phi( \mbf{x}, t ) $ represents the relative concentration difference between the two phases, $  0 < \varepsilon \ll 1 $ is a parameter related to the interfacial thickness, and $ f(\phi) = -F'(\phi) $ with $ F(\phi) = \f{1}{4} ( 1 - \phi^2 )^2 $ being the standard double-well potential. The mobility function $\mm(\phi) \geq 0$ is allowed to be nonlinear and possibly degenerate. One commonly used degenerate mobility function in materials science is as follows:
\begin{equation}\label{Def:Mobi_1}
	\mm(\phi) = ( 1 - \phi^{2} )^{m} \quad \text{with} \  m > 0,
\end{equation}
while other forms of mobility can be found in \cite{SIAM_2012_Du,JCP_2016_Du,ARMA_2016_Du,PRE_1997_Puri,JSP_1994_Taylor} or in subsection \ref{sub:Eff_mobi}.

The Allen--Cahn equation \eqref{Model:MAC1}--\eqref{Model:MAC2} satisfies following \textit{energy dissipation law}
\begin{equation}\label{Eg_law}
	\begin{aligned}
		\f{ d }{ dt } E[ \phi ] = - \int_{\Omega} \mm( \phi ) \mu^2 d \mbf{x} \leq 0 ,
	\end{aligned}
\end{equation}
where the free energy $E[\phi]$ is defined by
\begin{equation}\label{def:energy}
	\begin{aligned}
		E[\phi] = \int_{\Omega} \left( \f{\varepsilon^2}{2} \vert \nabla \phi(\mbf{x}) \vert^2 + F( \phi(\mbf{x}) ) \right) d \mbf{x}.
	\end{aligned}
\end{equation}
This shows that the free energy $E[\phi]$ decreases monotonically over time. Moreover, the Allen--Cahn equation also satisfies the maximum bound principle (MBP), that is, if the initial data satisfy $|\phi_{\text {init }}(\mbf{x})| \leq 1$, then the solution remains bounded as $|\phi(\mbf{x},t)| \leq 1$ for all $t>0$; see \cite{CMS_Shen_2016} for further details. It is noteworthy that the energy dissipation law and the MBP are two fundamental properties of the Allen-Cahn equation. Therefore, it is crucial for numerical schemes to preserve such physical features at the discrete level, particularly the MBP, since its violation may cause the mobility (e.g., \eqref{Def:Mobi_1}) to become negative, leading to ill-posedness and nonphysical numerical solutions.

In the past decade, significant progress has been made in developing numerical schemes for Allen–Cahn type equations that ensure the preservation of the discrete MBP and energy stability, particularly for cases with \textit{constant} mobility. Tang et al. \cite{JCM_Tang_2016} introduced a first-order linear stabilized scheme that was both energy-dissipative and MBP-preserving. To achieve second-order accuracy in time while maintaining (modified) energy dissipation and MBP, nonlinear variable-step second-order backward differentiation formula (BDF2) schemes \cite{SINUM_2020_Liao} and stabilized Crank-Nicolson/Adams-Bashforth schemes \cite{AML_2020_Hou} have been explored. High-order MBP-preserving time integration schemes, reaching up to third- and fourth-order, have been developed for semilinear parabolic equations using the (stabilized) integrating factor Runge-Kutta (IFRK) method \cite{JCP_2021_Ju,SISC_2021_Li,CMAME_2022_Zhang}. More recently, Liao et al. \cite{JCP_2024_Liao} proposed corrected IFRK methods using two classes of difference corrections, the telescopic correction and nonlinear-term translation correction, which preserve both steady-state solutions and the original energy dissipation law. Li et al. \cite{SISC_2020_LiBuyang} developed high-order in time numerical methods for parabolic equations, enforcing the discrete MBP at nodal points through a cut-off operation applied to the lumped mass finite element solution, although energy stability was not addressed. This cut-off approach was subsequently combined with the scalar auxiliary variable (SAV) approach \cite{JCP_2018_Shen,SIREV_2019_Shen}, resulting in (modified) energy-stable and MBP-preserving linear schemes \cite{JSC_2022_Yang}. Further advancements include the use of Lagrange multiplier techniques for positivity- and bound-preserving discretizations of parabolic equations \cite{CMAME_2022_Huang,SINUM_2022_Huang}, offering a fresh perspective on the practical cut-off method. Additionally, Ju et al. \cite{SINUM_2022_Ju} integrated the SAV method with the stabilized exponential time differencing (ETD) approach \cite{SINUM_2019_Du,SIREV_Du_2021} to design first- and second-order linear schemes that unconditionally preserve the discrete MBP and a modified energy dissipation law for the Allen–Cahn equation with constant mobility.

The presence of nonlinear and potentially degenerate \textit{variable} mobility significantly complicates the construction and analysis of numerical schemes for the generalized Allen–Cahn equation \eqref{Model:MAC1}--\eqref{Model:MAC2}, compared to the classical Allen–Cahn equation with \textit{constant} mobility. In \cite{CMS_Shen_2016}, a stabilized, MBP-preserving first-order linear scheme was developed for the generalized Allen–Cahn equation with an advection term. To circumvent the challenges posed by potential mobility degeneracy, Cai et al. \cite{CMS_2023_Cai} assumed the existence of a positive constant $\mm_{*}$ such that
\begin{equation}\label{Assu:M}
	\mm(s) \geq \mm_{*} > 0, \quad \forall s \in \mathbb{R},
\end{equation}
which excludes degenerate mobility functions like \eqref{Def:Mobi_1} from practical applications. Under this assumption, they proposed stabilized first-order ETD and second-order ETD Runge–Kutta schemes that unconditionally preserve the discrete MBP. Recently, Hou et al. \cite{MOC_2023_Ju} introduced a linear variable-step second-order scheme for the Allen–Cahn equation with general mobility, combining a prediction-correction approach with stabilization. This scheme employs a classical first-order method to predict a locally second-order and MBP-preserving solution, followed by a stabilized BDF2-type correction step. As a result, two variable-coefficient Poisson-type systems are solved per time step. Using the kernel recombination technique \cite{SINUM_2020_Liao,SISC_2016_Lv}, the discrete MBP was established for general mobility under certain constraints on the time stepsize and the ratio of adjacent time stepsizes, while energy stability was proven only for constant mobility. Subsequently, by imposing two linear stabilization terms, a family of double stabilized prediction-correction schemes was proposed in \cite{AAMM_2024_Hou,CMA_2025_Hou,ANM_2025_Hou} to unconditionally preserve the discrete MBP. However, these results rely on assumptions of either constant mobility or variable mobility with a uniformly positive lower bound, which may not reflect physical reality or practical needs. This motivates us to design and analyze efficient numerical schemes that maintain the structure-preserving properties of previous methods while relaxing restrictive mobility assumptions.

In this paper, we aim to develop and analyze unconditionally MBP-preserving and energy-stable linear schemes for the Allen–Cahn equation \eqref{Model:MAC1}--\eqref{Model:MAC2} with degenerate mobility. The main contributions of this work in constructing and analyzing structure-preserving numerical methods include the following:
\begin{itemize}
	\setlength{\itemindent}{0pt}
	\setlength{\leftskip}{0pt}
	
	\item \textbf{Mobility-robust energy stability analysis}: We introduce a novel dynamic stabilization technique, distinct from previous strategies \cite{MOC_2023_Ju,ANM_2025_Hou,JSC_2022_Ju}, to establish two linear schemes with mobility-robust energy stability. Notably, the stability analysis does not depend on $1/\mm(\phi)$, which is irrelevant when $\mm(\phi)=0$. To our knowledge, this is the first work to develop and analyze a linear scheme ensuring mobility-robust energy stability for the degenerate Allen--Cahn equation.
	
	\item \textbf{Unconditional MBP-preservation}: The new dynamic stabilization approach guarantees unconditional preservation of the discrete MBP for both schemes. A novel prediction-correction strategy is designed and analyzed for the second-order unconditionally MBP-preserving linear scheme. This approach precludes using the standard M-matrix-based analysis framework for discrete MBP \cite{MOC_2023_Ju,ANM_2025_Hou,JSC_2022_Ju,SINUM_2020_Liao,JSC_2025_Zhang}. Instead, a discrete maximum principle (DMP) analysis approach is adopted for MBP analysis, as in \cite{SIREV_Du_2021,JCP_Lan_2023} for continuous problems and \cite{CMS_Shen_2016} for time-discrete schemes.
	
	\item \textbf{Maximum-norm error analysis}:
	Utilizing the discrete MBP property and DMP analysis framework, we rigorously establish optimal maximum-norm error estimates for both schemes. Theorems \ref{thm:Conver_sBE} and \ref{thm:Conver_sCN} present first-order temporal and second-order spatial accuracy for the dynamically stabilized backward Euler (DsBE) scheme, and second-order accuracy in time and space for the dynamically stabilized Crank-Nicolson (DsCN) scheme.
	
	\item \textbf{Improved computational efficiency}: Unlike existing MBP-preserving second-order linear schemes \cite{MOC_2023_Ju,SINUM_2022_Ju,JSC_2022_Ju,JSC_2025_Zhang}, which require solving an auxiliary first-order scheme for prediction, we employ a cut-off preprocessing procedure on the extrapolation solution. This modification reduces computational expense while retaining MBP preservation and energy stability. Moreover, the construction and analysis of the scheme are based on variable time stepsizes, which allows us to employ an adaptive time-stepping strategy \cite{SISC_2011_Qiao} to further improve computational efficiency without sacrificing accuracy, as demonstrated in subsection \ref{sub:adaptive}.
\end{itemize}

The remainder of this paper is organized as follows. In Section \ref{Sec:BE}, we first present a first-order linear dynamically stabilized scheme, and then provide rigorous unconditional mobility-robust numerical analyses of MBP preservation, energy dissipation, and maximum-norm error estimates. Section \ref{Sec:CN} introduces a second-order linear dynamically stabilized Crank–Nicolson scheme using a novel prediction technique within the prediction-correction framework, and corresponding structure-preserving analysis and error estimates are also discussed. In Section \ref{sec:test}, representative numerical examples are provided to demonstrate the performance of the proposed schemes. Finally, some concluding remarks are drawn in the last section.

\section{First-order stabilized backward Euler scheme}\label{Sec:BE}
In this section, we investigate the linear dynamically stabilized backward Euler (DsBE) scheme for the Allen--Cahn equation \eqref{Model:MAC1}--\eqref{Model:MAC2}. For simplicity, we restrict the following discussion to the $d$-dimensional square domain $ \Omega = [0, L]^{d} $ for $d \leq 3$, subject to periodic boundary conditions. It is worth noting that the proposed scheme and its associated analysis can be readily extended to the case of homogeneous Neumann boundary condition.

\subsection{Dynamically stabilized backward Euler scheme}
For a given positive integer $ M $, let $ \Omega_{h} := \big\{ \mbf{x_{i}} = h \mbf{i} \mid \mbf{i} \in \{ 1, 2, \cdots , M \}^{d} \big\} $ be the set of nodes of a uniform Cartesian mesh over $ \Omega $, where the spatial grid length is $ h = L/M $. For ease of notation, we denote the grid function by $ v_{ \mbf{i} } := v (\mbf{x_i} ) $ for any $ \mbf{x_i} \in \Omega_{h} $. Let $ \mathbb{V}_h $ be the set of all $M$-periodic, real-valued grid functions on $ \Omega_{h} $, i.e.,
$$
\mathbb{V}_h:=\left\{v \mid v= \{ v_{ \mbf{i} } \}_{\mbf{x_{i}}\in\Omega_{h}}~ \text{and}~  v ~ \text{is periodic}\right\} .
$$
For any $ v, w \in \mathbb{V}_h $, define the following discrete inner product, and discrete $ L^2 $ and $ L^{\infty} $ norms by
$$
\langle v, w\rangle=h^2 \sum_{\mbf{x_i}, \mbf{x_j} \in \Omega_{h}} v_{ \mbf{i} } w_{\mbf{j}}, \quad\|v\|=\sqrt{\langle v, v\rangle}, \quad\|v\|_{\infty} = \max_{ \mbf{x_i} \in \Omega_{h} } | v_{ \mbf{i} } |.
$$
Let $ \Delta_{h} $ be the discrete Laplacian operator obtained using the standard second-order central finite difference method.
Finally, we introduce a discrete analogue of the energy functional \eqref{def:energy} defined by
\begin{equation}\label{def:dis_energy}
	E_h[v] := - \f{\varepsilon^2}{2} \langle \Delta_{h} v, v \rangle + \langle F(v), 1 \rangle, \quad \forall v \in \mathbb{V}_h.
\end{equation}

In what follows, we consider a general partition of the time interval $J:= [0,T] $ by $ 0 = t_{0} < t_{1} < \cdots < t_{N} = T$ with subinterval $J_{k} :=[t_{k-1},t_{k}]$ and variable time stepsize $ \tau_{k} := t_{k} - t_{k-1} $ for $ 1 \leq k \leq N $. Assume the partition is regular, i.e., there is a positive constant $\sigma$ such that $r_{k+1}:=\tau_{k+1}/\tau_{k}\le \sigma$. Denote by $ \tau := \max_{1 \leq k \leq N} \tau_{k}$ the maximum time stepsize. For any real-valued sequence $ \{ v^{k} \}^{N}_{k=0} $, set $ \nabla_{\tau} v^{k} = v^{k} - v^{k-1}$ and $ \delta_t v^{k} = \nabla_{\tau} v^{k}/\tau_{k} $ for $ k \geq 1 $.
Then, the first-order fully-discrete DsBE scheme takes the form
\begin{empheq}{align*}
	& \delta_t \phi^{n+1} = - \mm( \phi^{n} ) \mu^{n+1} + \mathcal{S}_{2}^{n+1}[ \phi ], \\
	& \mu^{n+1} = - \varepsilon^2 \Delta_{h} \phi^{n+1} - f( \phi^{n} ) + \mathcal{S}_{1}^{n+1}[ \phi ],
\end{empheq}
where $ \mathcal{S}_{1}^{n+1}[ \phi ] $ and $ \mathcal{S}_{2}^{n+1}[ \phi ] $ denote suitable stabilization terms, which play a crucial role in preserving the discrete MBP. To ensure a mobility-robust energy stability analysis, these stabilization terms are assumed to satisfy the boundedness condition
\begin{equation}\label{Ass:sBE}
	- ( \mathcal{S}_{1}^{n+1}[\phi], \delta_t \phi^{n+1} ) + ( \mathcal{S}_{2}^{n+1}[\phi], \mu^{n+1} )  \leq ( \mm(\phi^{n}) \mu^{n+1}, \mu^{n+1} ).
\end{equation}
A natural and effective choice is to set $ \mathcal{S}_{1}^{n+1}[\phi] = S_{1} ( \phi^{n+1} - \phi^{n} ) $ and $ \mathcal{S}_{2}^{n+1}[\phi] = 0 $, where $ S_{1} \geq 0 $ is a constant. Therefore, the first-order DsBE scheme can be reformulated as
\begin{empheq}{align}
	& \delta_t \phi^{n+1} = - \mm( \phi^{n} ) \mu^{n+1},\label{sch:sBE_1}\\
	& \mu^{n+1} = - \varepsilon^2 \Delta_{h} \phi^{n+1} - f( \phi^{n} ) + S_{1} ( \phi^{n+1} - \phi^{n} ), \label{sch:sBE_2}
\end{empheq}
which is simply denoted as $ \phi^{n+1}: = \text{DsBE}( S_{1}, \phi^{n}, \mm(\phi^{n}), \tau_{n+1} )$.

\begin{remark}
	Another first-order linear stabilization scheme was proposed as follows:
	\begin{equation}\label{sch:sBE:e2}
		\delta_t \phi^{n+1} = \mm( \phi^{n} ) \left( \varepsilon^2 \Delta_{h} \phi^{n+1} + f( \phi^{n} ) \right) - S_{1}^{*} ( \phi^{n+1} - \phi^{n} )
	\end{equation}
	in \cite{MOC_2023_Ju,CMS_Shen_2016}. The primary difference between this scheme and our approach lies in the stabilization term. By substituting \eqref{sch:sBE_2} into \eqref{sch:sBE_1}, one finds that the stabilization term in our scheme takes the form $ - S_{1} \mm( \phi^{n} ) ( \phi^{n+1} - \phi^{n} ) $, which satisfies \eqref{Ass:sBE} and features a non-negative coefficient that dynamically depends on $ \phi^{n} $. This dynamic design is particularly advantageous for establishing mobility-robust energy stability, especially for the second-order scheme discussed in next section.
\end{remark}

\begin{remark}
	Using the M-matrix-based analysis technique \cite{MOC_2023_Ju,ANM_2025_Hou}, it has been shown that the scheme \eqref{sch:sBE:e2} unconditionally preserves the MBP, provided that the stabilization parameter satisfies
	\begin{equation}\label{sch:sBE:e3}
		S_{1}^{*} \geq \max_{\xi\in[-1,1]} \left( \mm'(\xi) F'(\xi) + \mm(\xi) F''(\xi) \right).
	\end{equation}
	However, owing to the introduction of dynamic stabilization terms, it leads to significant challenges for the theoretical verification of MBP preservation (cf. Theorems \ref{thm:MBP_sBE} and \ref{thm:MBP_sCN}) and for the maximum-norm error analysis (cf. Theorems \ref{thm:Conver_sBE} and \ref{thm:Conver_sCN}) for our schemes. To address these difficulties, we employ a DMP analysis framework, which effectively circumvents the influence of the mobility variation---a key obstacle for the M-matrix-based framework commonly employed in \cite{MOC_2023_Ju,ANM_2025_Hou,JSC_2022_Ju,JSC_2025_Zhang}. The following theorem shows that the novel DsBE scheme preserves the MBP only under $S_{1} \geq\left\|f^{\prime}\right\|_{C[-1, 1]}=2$.
\end{remark}

\subsection{The MBP and energy dissipation}
In this subsection, we demonstrate that the proposed first-order DsBE scheme \eqref{sch:sBE_1}--\eqref{sch:sBE_2} unconditionally preserves both the discrete MBP and the discrete energy dissipation law.

To establish the discrete MBP of the DsBE scheme \eqref{sch:sBE_1}--\eqref{sch:sBE_2}, the following useful lemma is presented.
\begin{lemma}[\cite{SIREV_Du_2021}]\label{lem:MBP_right}
	If $\kappa \geq \left\|f^{\prime}\right\|_{C[-1, 1]} = 2$ holds for some positive constant $\kappa$, then we have $|f( \xi ) + \kappa \xi | \leq \kappa $ for any $\xi \in[-1, 1]$.
\end{lemma}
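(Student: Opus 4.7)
The plan is to exploit monotonicity. Define the auxiliary function $g(\xi) := f(\xi) + \kappa \xi$ on $[-1,1]$ and show that it is non-decreasing, so that its range on $[-1,1]$ is controlled by its endpoint values. Since $f(\phi) = -F'(\phi) = \phi - \phi^3$, we have $f'(\xi) = 1 - 3\xi^2$, whose maximum of $|f'|$ on $[-1,1]$ is attained at $\xi=\pm 1$ with value $2$, which matches the stated $\|f'\|_{C[-1,1]} = 2$.

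Given the hypothesis $\kappa \geq 2$, the first key step is to observe
\[
g'(\xi) = f'(\xi) + \kappa \geq -\|f'\|_{C[-1,1]} + \kappa \geq 0 \quad \text{for all } \xi \in [-1,1],
\]
so $g$ is monotone non-decreasing on $[-1,1]$. Hence for any $\xi \in [-1,1]$,
\[
g(-1) \leq g(\xi) \leq g(1).
\]

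The second key step is to evaluate the endpoints. A direct computation gives $f(\pm 1) = (\pm 1) - (\pm 1)^3 = 0$, therefore $g(-1) = -\kappa$ and $g(1) = \kappa$. Combining with the monotonicity bound yields $-\kappa \leq f(\xi) + \kappa\xi \leq \kappa$, i.e., $|f(\xi) + \kappa\xi| \leq \kappa$ as required.

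There is no genuine obstacle here; the lemma is elementary and the only content is recognizing that $\kappa \geq \|f'\|_{C[-1,1]}$ is precisely what makes $g$ monotone, and that the specific cubic double-well nonlinearity vanishes at the pure phases $\xi = \pm 1$, which is exactly what anchors the bound at the endpoints.
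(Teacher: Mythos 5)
Your proof is correct: the monotonicity of $g(\xi)=f(\xi)+\kappa\xi$ under $\kappa\geq\|f'\|_{C[-1,1]}$ together with $f(\pm 1)=0$ gives exactly the claimed bound. The paper itself states this lemma without proof, citing \cite{SIREV_Du_2021}, and your argument is essentially the standard one from that reference (there it is usually phrased via the mean value theorem, $f(\xi)+\kappa\xi-\kappa=(f'(\eta)+\kappa)(\xi-1)\leq 0$, which is the same monotonicity observation), so there is nothing to add.
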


\begin{theorem}[MBP of the DsBE scheme]\label{thm:MBP_sBE}
	Assume that $S_{1} \geq\left\|f^{\prime}\right\|_{C[-1, 1]}=2$, then
	the DsBE scheme \eqref{sch:sBE_1}--\eqref{sch:sBE_2} unconditionally preserves the discrete MBP, i.e.,
	\begin{equation*}\label{MBP:sBE}
		\left\|\phi_{\text {init }}\right\|_{\infty} \leq 1 \quad  \Longrightarrow \quad \left\|\phi^{n+1}\right\|_{\infty} \leq 1 \quad \text{for} \  n \geq 0.
	\end{equation*}
\end{theorem}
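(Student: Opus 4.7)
The plan is to argue by induction on $n$ using a discrete maximum principle (DMP) argument applied directly to the combined equation for $\phi^{n+1}$. First I would eliminate $\mu^{n+1}$ by substituting \eqref{sch:sBE_2} into \eqref{sch:sBE_1} and rearranging pointwise at each grid node $\mbf{x_i}\in\Omega_{h}$, yielding
\[
\bigl(1+\tau_{n+1}S_{1}\mm(\phi^{n}_{\mbf{i}})\bigr)\phi^{n+1}_{\mbf{i}}-\tau_{n+1}\varepsilon^{2}\mm(\phi^{n}_{\mbf{i}})(\Delta_{h}\phi^{n+1})_{\mbf{i}}=\phi^{n}_{\mbf{i}}+\tau_{n+1}\mm(\phi^{n}_{\mbf{i}})\bigl(f(\phi^{n}_{\mbf{i}})+S_{1}\phi^{n}_{\mbf{i}}\bigr).
\]
Before invoking the DMP I would briefly observe that this linear system admits a unique solution: each row has a strictly positive diagonal $1+\tau_{n+1}\mm(\phi^{n}_{\mbf{i}})(S_{1}+2d\varepsilon^{2}/h^{2})$, non-positive off-diagonals $-\tau_{n+1}\varepsilon^{2}\mm(\phi^{n}_{\mbf{i}})/h^{2}$, and absolute row-sum residual $1+\tau_{n+1}S_{1}\mm(\phi^{n}_{\mbf{i}})\geq 1$, so the matrix is weakly diagonally dominant, with strict dominance at any node where $\mm(\phi^{n})>0$, while rows with $\mm(\phi^{n})=0$ decouple trivially into $\phi^{n+1}_{\mbf{i}}=\phi^{n}_{\mbf{i}}$.

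Next I would run the induction step. Assuming $\|\phi^{n}\|_{\infty}\leq 1$, let $\mbf{i}^{*}\in\Omega_{h}$ be a node at which $\phi^{n+1}$ attains its maximum. Every nearest-neighbor difference $\phi^{n+1}_{\mbf{j}}-\phi^{n+1}_{\mbf{i}^{*}}$ is non-positive, so $(\Delta_{h}\phi^{n+1})_{\mbf{i}^{*}}\leq 0$, and since $\mm\geq 0$ the term $-\tau_{n+1}\varepsilon^{2}\mm(\phi^{n}_{\mbf{i}^{*}})(\Delta_{h}\phi^{n+1})_{\mbf{i}^{*}}$ is non-negative. Evaluating the displayed equation at $\mbf{i}^{*}$ and dropping this non-negative term from the left-hand side, I would obtain
\[
\bigl(1+\tau_{n+1}S_{1}\mm(\phi^{n}_{\mbf{i}^{*}})\bigr)\phi^{n+1}_{\mbf{i}^{*}}\leq \phi^{n}_{\mbf{i}^{*}}+\tau_{n+1}\mm(\phi^{n}_{\mbf{i}^{*}})\bigl(f(\phi^{n}_{\mbf{i}^{*}})+S_{1}\phi^{n}_{\mbf{i}^{*}}\bigr).
\]
The induction hypothesis $|\phi^{n}_{\mbf{i}^{*}}|\leq 1$ combined with Lemma \ref{lem:MBP_right} (applied with $\kappa=S_{1}\geq 2$) bounds the right-hand side above by $1+\tau_{n+1}S_{1}\mm(\phi^{n}_{\mbf{i}^{*}})$; dividing by the positive factor $1+\tau_{n+1}S_{1}\mm(\phi^{n}_{\mbf{i}^{*}})$ yields $\phi^{n+1}_{\mbf{i}^{*}}\leq 1$. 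An analogous argument at a minimizer $\mbf{i}_{*}$, using $(\Delta_{h}\phi^{n+1})_{\mbf{i}_{*}}\geq 0$ and the lower bound $f(\phi^{n}_{\mbf{i}_{*}})+S_{1}\phi^{n}_{\mbf{i}_{*}}\geq -S_{1}$ from Lemma \ref{lem:MBP_right}, gives $\phi^{n+1}_{\mbf{i}_{*}}\geq -1$, closing the induction.

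The main obstacle, and the reason the DMP is the natural tool here, is the grid-point-dependent coefficient $\mm(\phi^{n}_{\mbf{i}})$ multiplying both the discrete Laplacian and the stabilization contribution. This spatial variation, together with possible degeneracy $\mm(\phi^{n}_{\mbf{i}})=0$, obstructs the M-matrix-based MBP arguments of \cite{MOC_2023_Ju,ANM_2025_Hou,JSC_2022_Ju,SINUM_2020_Liao,JSC_2025_Zhang}, which typically rely on a uniformly positive lower bound on the mobility to control the inverse. The DMP circumvents this because it only uses the purely local factor $1+\tau_{n+1}S_{1}\mm(\phi^{n}_{\mbf{i}^{*}})$, which is strictly positive regardless of the behavior of $\mm$ elsewhere in the domain, so neither a restriction on $\tau_{n+1}$ nor non-degeneracy of $\mm$ is needed, delivering the unconditional discrete MBP.
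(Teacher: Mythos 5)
Your proof is correct and follows essentially the same route as the paper: substitute \eqref{sch:sBE_2} into \eqref{sch:sBE_1}, evaluate the resulting pointwise identity at a maximizer and a minimizer of $\phi^{n+1}$, use the sign of $\Delta_h\phi^{n+1}$ there together with $\mm\geq 0$ and Lemma \ref{lem:MBP_right} with $\kappa=S_1$, and close the induction. The added remark on unique solvability via diagonal dominance is a harmless (and in fact the dominance is strict at every row, since the residual is $1+\tau_{n+1}S_1\mm(\phi^n_{\mbf{i}})\geq 1$) supplement that the paper omits.
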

\begin{proof}
	This claim will be verified by a complete mathematical induction argument. Assume that $ \| \phi^{k} \|_{\infty} \leq 1 $ holds for $ 0 \leq k \leq n $. Inserting \eqref{sch:sBE_2} into \eqref{sch:sBE_1} gives
	\begin{equation}\label{MBP_sBE:1}
		\begin{aligned}
			& \left[ \f{ 1 }{ \tau_{n+1} } I + S_{1} \mm( \phi^{n} ) - \varepsilon^{2} \mm( \phi^{n} ) \Delta_{h} \right] \phi^{n+1} \\
			& \quad = \f{ 1 }{ \tau_{n+1} } \phi^{n} + \mm( \phi^{n} ) \big( f( \phi^{n} ) + S_{1} \phi^{n} \big).
		\end{aligned}
	\end{equation}
	
	We first show that $ \max_{ \mbf{x_{i}} \in \Omega_{h} } \phi^{n+1}_{ \mbf{i} } \leq 1 $. Without loss of generality,  Suppose that the numerical solution $ \phi^{n+1} $ achieves its maximum value at some $ \mbf{x_{j}} \in \Omega_{h} $, i.e., $
	\phi^{n+1}_{ \mbf{j} } = \max_{ \mbf{x_{i}} \in \Omega_{h} } \phi^{n+1}_{ \mbf{i} } $. Noting that $ \Delta_{h} \phi^{n+1}_\mbf{j} \leq 0 $, $\mm(\phi^{n}_\mbf{j})\geq0$ and Lemma \ref{lem:MBP_right}, we deduce from taking the spatial index $ \mbf{j} $ in \eqref{MBP_sBE:1} that
	\begin{align*}
		\left[ \f{ 1 }{ \tau_{n+1} } + S_{1} \mm( \phi^{n}_\mbf{j} ) \right] \phi^{n+1}_\mbf{j}
		\leq \f{ 1 }{ \tau_{n+1} } \phi^{n}_{ \mbf{j} } + S_{1} \mm( \phi^{n}_\mbf{j} )
		\leq \f{ 1 }{ \tau_{n+1} }  + S_{1} \mm( \phi^{n}_\mbf{j}),
	\end{align*}
	which implies that $ \phi^{n+1}_\mbf{j} \leq 1 $.
	
	Next, for the minimum, suppose that $\phi^{n+1}_{ \mbf{k} } = \min_{ \mbf{x_{i}} \in \Omega_{h} } \phi^{n+1}_{ \mbf{i} } $ at some $ \mbf{x_{k}} \in \Omega_{h} $. By the similar argument, we can show that $ \phi^{n+1}_\mbf{k} \geq -1 $. Consequently, it holds that $ \| \phi^{n+1} \|_{\infty} \leq 1$. This completes the proof.
\end{proof}

\begin{theorem}[Energy dissipation of the DsBE scheme]\label{thm:energy_1}
	If $S_{1} \geq\left\|f^{\prime}\right\|_{C[-1, 1]}=2$, the DsBE scheme \eqref{sch:sBE_1}--\eqref{sch:sBE_2} is unconditionally energy dissipative, i.e.,
	\begin{equation*}\label{EDL:sch1}
		E_h[\phi^{n+1}] - E_h[\phi^{n}] \leq - \tau_{n+1} \langle \mm(\phi^{n}) \mu^{n+1}, \mu^{n+1} \rangle, \quad n \geq 0.
	\end{equation*}
\end{theorem}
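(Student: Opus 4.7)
The plan is to test the scheme with the chemical potential, then recognize on the left-hand side the discrete energy difference together with a nonnegative Taylor remainder that can be absorbed by the dynamic stabilization term under $S_{1}\geq 2$.

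\textbf{Step 1 (testing and substitution).} First, I would take the discrete inner product of \eqref{sch:sBE_1} with $\tau_{n+1}\mu^{n+1}$ to obtain
\[
\langle\nabla_{\tau}\phi^{n+1},\mu^{n+1}\rangle = -\tau_{n+1}\langle\mm(\phi^{n})\mu^{n+1},\mu^{n+1}\rangle,
\]
and then substitute the expression for $\mu^{n+1}$ from \eqref{sch:sBE_2} on the left-hand side. This produces three contributions: a diffusive piece $-\varepsilon^{2}\langle\Delta_{h}\phi^{n+1},\nabla_{\tau}\phi^{n+1}\rangle$, a reactive piece $-\langle f(\phi^{n}),\nabla_{\tau}\phi^{n+1}\rangle$, and the stabilization piece $S_{1}\|\nabla_{\tau}\phi^{n+1}\|^{2}$.

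\textbf{Step 2 (diffusive rewriting and Taylor of the reactive term).} Using the symmetry and negative semidefiniteness of the periodic discrete Laplacian together with the identity $2\langle a-b,a\rangle=\|a\|^{2}-\|b\|^{2}+\|a-b\|^{2}$ applied to the bilinear form $-\langle\Delta_{h}\cdot,\cdot\rangle$, I would rewrite the diffusive piece as the difference of the quadratic energy $-\tfrac{\varepsilon^{2}}{2}\langle\Delta_{h}\phi,\phi\rangle$ between levels $n{+}1$ and $n$ plus the nonnegative remainder $-\tfrac{\varepsilon^{2}}{2}\langle\Delta_{h}\nabla_{\tau}\phi^{n+1},\nabla_{\tau}\phi^{n+1}\rangle$, which may be discarded. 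For the reactive piece, a nodewise Taylor expansion combined with $f=-F'$ gives
\[
-\langle f(\phi^{n}),\nabla_{\tau}\phi^{n+1}\rangle = \langle F(\phi^{n+1})-F(\phi^{n}),1\rangle - \tfrac{1}{2}\langle F''(\xi^{n+1})(\nabla_{\tau}\phi^{n+1})^{2},1\rangle,
\]
where $\xi^{n+1}$ is an intermediate grid function between $\phi^{n}$ and $\phi^{n+1}$.

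\textbf{Step 3 (closing with MBP and $S_{1}\geq 2$).} Combining the previous two steps, the identity from Step 1 collapses to
\[
E_{h}[\phi^{n+1}]-E_{h}[\phi^{n}] + S_{1}\|\nabla_{\tau}\phi^{n+1}\|^{2} - \tfrac{1}{2}\langle F''(\xi^{n+1})(\nabla_{\tau}\phi^{n+1})^{2},1\rangle \leq -\tau_{n+1}\langle\mm(\phi^{n})\mu^{n+1},\mu^{n+1}\rangle.
\]
Because Theorem \ref{thm:MBP_sBE} already guarantees $\|\phi^{n}\|_{\infty},\|\phi^{n+1}\|_{\infty}\leq 1$ under the same hypothesis $S_{1}\geq 2$, the intermediate $\xi^{n+1}$ lies in $[-1,1]$ and $|F''(\xi^{n+1})|\leq\|F''\|_{C[-1,1]}=2$. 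Hence the stabilization term absorbs the Taylor remainder, $S_{1}\|\nabla_{\tau}\phi^{n+1}\|^{2}-\tfrac{1}{2}\langle F''(\xi^{n+1})(\nabla_{\tau}\phi^{n+1})^{2},1\rangle\geq (S_{1}-1)\|\nabla_{\tau}\phi^{n+1}\|^{2}\geq 0$, and the desired inequality follows.

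The main obstacle is really just the uniform control of the Taylor remainder: without the MBP we would not be able to bound $|F''(\xi^{n+1})|$ by a mobility-independent constant, and the entire mobility-robust flavor of the estimate—i.e., no appearance of $1/\mm(\phi)$—would be lost. It is precisely Theorem \ref{thm:MBP_sBE} that supplies this uniform bound, and the requirement $S_{1}\geq 2$ inherited from Lemma \ref{lem:MBP_right} is (more than) strong enough to absorb the remainder since only $S_{1}\geq 1$ is actually needed for the energy argument itself.
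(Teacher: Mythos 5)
Your proposal is correct and follows essentially the same route as the paper: test \eqref{sch:sBE_1} and \eqref{sch:sBE_2} with $\tau_{n+1}\mu^{n+1}$ and $\nabla_{\tau}\phi^{n+1}$, telescope the diffusive term via $2a(a-b)\geq a^2-b^2$, and absorb the nonlinear remainder into $S_{1}\|\nabla_{\tau}\phi^{n+1}\|^{2}$ using the MBP. The only cosmetic difference is that you derive the reactive bound by an explicit Taylor expansion with the sharp constant $\tfrac12\|F''\|_{C[-1,1]}=1$ (so $S_{1}\geq 1$ suffices for this step), whereas the paper invokes the cruder pointwise inequality $-f(b)(a-b)\geq F(a)-F(b)-2(a-b)^{2}$; under the standing hypothesis $S_{1}\geq 2$ both close identically.
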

\begin{proof}
	Taking the inner products of \eqref{sch:sBE_1} and \eqref{sch:sBE_2} with $\tau_{n+1} \mu^{n+1} $ and  $\nabla_{\tau}\phi^{n+1}$  respectively, together with the relation $ 2 a ( a - b ) \ge a^2 - b^2 $, we obtain
	\begin{equation*}
		\begin{aligned}
			& - \f{\varepsilon^2}{2} \langle \Delta_{h} \phi^{n+1}, \phi^{n+1} \rangle + \f{\varepsilon^2}{2} \langle \Delta_{h} \phi^{n}, \phi^{n} \rangle - \langle f( \phi^{n} ), \nabla_{\tau} \phi^{n+1} \rangle + S_{1} \| \nabla_{\tau} \phi^{n+1} \|^{2} \\
			& \quad \le - \varepsilon^2 \langle \Delta_{h} \phi^{n+1}, \nabla_{\tau} \phi^{n+1} \rangle - \langle f( \phi^{n} ), \nabla_{\tau} \phi^{n+1} \rangle + S_{1} \| \nabla_{\tau} \phi^{n+1} \|^{2}\\
			& \quad  = - \tau_{n+1} \langle \mm(\phi^{n}) \mu^{n+1}, \mu^{n+1} \rangle.
		\end{aligned}
	\end{equation*}
	For any $ a , b \in [-1,1] $, it is easy to check that
	\begin{equation*}
		\begin{aligned}
			- f(b) ( a - b ) \geq F(a) - F( b ) - 2 ( a - b )^2.
		\end{aligned}
	\end{equation*}
	Thus, we arrive at
	\begin{equation*}
		\begin{aligned}
			E_{h}[\phi^{n+1}] - E_{h}[\phi^{n}] + ( S_{1} - 2 ) \| \nabla_{\tau} \phi^{n+1} \|^{2} \leq - \tau_{n+1} \langle \mm(\phi^{n}) \mu^{n+1}, \mu^{n+1} \rangle,
		\end{aligned}
	\end{equation*}
	which completes the proof.
\end{proof}

\subsection{Maximum-norm error estimate}
In this subsection, the maximum-norm error estimate of the DsBE scheme for the Allen--Cahn equation \eqref{Model:MAC1}--\eqref{Model:MAC2} with a general mobility is established.

\begin{theorem}[Error estimate of the DsBE scheme]\label{thm:Conver_sBE}
	Assume that $S_{1} \geq\left\|f^{\prime}\right\|_{C[-1, 1]}=2$ and $ \phi \in W^{2,1} ( J; L^{\infty}( \Omega ) ) \cap L^{\infty} ( J; W^{4,\infty}( \Omega ) ) $. Then, the solution of the DsBE scheme \eqref{sch:sBE_1}--\eqref{sch:sBE_2} is convergent in the maximum norm, i.e.,
	\begin{equation*}\label{Conclu:Cover_sBE}
		\| \phi( t_{n} ) - \phi^{n} \|_{\infty} \leq C ( \tau + h^2 ) \quad \text{for} \   n \geq 0.
	\end{equation*}
\end{theorem}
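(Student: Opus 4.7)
The plan is to prove the estimate via a discrete-maximum-principle (DMP) argument applied to the error equation for $e^n:=\phi(t_n)-\phi^n$, followed by a discrete Gronwall lemma. Two structural facts underpin the analysis: $\|\phi(t_n)\|_\infty\le 1$ by the continuous MBP and $\|\phi^n\|_\infty\le 1$ by Theorem \ref{thm:MBP_sBE}, so $\mm$ and $f$ are evaluated only on $[-1,1]$ where they are Lipschitz (with $\|f'\|_{C[-1,1]}=2$ and a finite $L_\mm:=\|\mm'\|_{C[-1,1]}$), while the regularity $\phi\in L^\infty(J;W^{4,\infty})$ delivers a uniform $L^\infty$ bound on $\Delta_h\phi(t_{n+1})$ independent of $h$.

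First I would write down the consistency identity
\[
\delta_t\phi(t_{n+1}) = \mm(\phi(t_n))\bigl[\varepsilon^2\Delta_h\phi(t_{n+1})+f(\phi(t_n))-S_1\bigl(\phi(t_{n+1})-\phi(t_n)\bigr)\bigr] + R^{n+1},
\]
with $\|R^{n+1}\|_\infty\le C(\tau+h^2)$; here $R^{n+1}$ packages the time Taylor error (controlled in $L^\infty$ through $W^{2,1}(J;L^\infty)$), the standard finite-difference truncation $\varepsilon^2(\Delta-\Delta_h)\phi=O(h^2)$, and the auxiliary lag terms $\mm(\phi(t_{n+1}))-\mm(\phi(t_n))$, $f(\phi(t_{n+1}))-f(\phi(t_n))$, and $\mm(\phi(t_n))S_1(\phi(t_{n+1})-\phi(t_n))$, each of order $\tau$. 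Subtracting the scheme \eqref{sch:sBE_1}--\eqref{sch:sBE_2} and grouping according to whether a factor of $\mm(\phi^n)$ is present yields the error equation
\[
\Bigl[\tfrac{1}{\tau_{n+1}}I + S_1\mm(\phi^n) - \varepsilon^2\mm(\phi^n)\Delta_h\Bigr]e^{n+1} = \tfrac{1}{\tau_{n+1}}e^n + \mm(\phi^n)\bigl[f(\phi(t_n))-f(\phi^n)+S_1 e^n\bigr] + [\mm(\phi(t_n))-\mm(\phi^n)]\,\mathcal Q^{n+1} + R^{n+1},
\]
with $\mathcal Q^{n+1}:=\varepsilon^2\Delta_h\phi(t_{n+1})+f(\phi(t_n))-S_1(\phi(t_{n+1})-\phi(t_n))$ uniformly bounded in $\tau$ and $h$.

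Next I would mimic the DMP argument of Theorem \ref{thm:MBP_sBE}. Let $\mbf{x_j}$ be a node at which $e^{n+1}$ attains its spatial maximum, and assume without loss of generality that this maximum is nonnegative (the negative case is trivial and the minimum is handled symmetrically). Since $\Delta_h e^{n+1}_{\mbf j}\le 0$, $\mm(\phi^n_{\mbf j})\ge 0$, and $e^{n+1}_{\mbf j}\ge 0$, the contributions $S_1\mm(\phi^n_{\mbf j})e^{n+1}_{\mbf j}$ and $-\varepsilon^2\mm(\phi^n_{\mbf j})\Delta_h e^{n+1}_{\mbf j}$ on the left-hand side are both nonnegative at $\mbf j$, and dropping them preserves the inequality---which is essential because it allows the argument to survive the degenerate case $\mm(\phi^n_{\mbf j})=0$. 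The Lipschitz estimates $|f(\phi(t_n)_{\mbf j})-f(\phi^n_{\mbf j})|\le 2|e^n_{\mbf j}|$ and $|\mm(\phi(t_n)_{\mbf j})-\mm(\phi^n_{\mbf j})|\le L_\mm|e^n_{\mbf j}|$, combined with the uniform bound on $\mathcal Q^{n+1}$, then give
\[
\max_{\mbf i}e^{n+1}_{\mbf i}\le (1+C\tau_{n+1})\|e^n\|_\infty + C\tau_{n+1}(\tau+h^2).
\]
A symmetric argument at a spatial minimizer controls $-\min_{\mbf i}e^{n+1}_{\mbf i}$ by the same right-hand side, so $\|e^{n+1}\|_\infty \le (1+C\tau_{n+1})\|e^n\|_\infty + C\tau_{n+1}(\tau+h^2)$. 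A standard discrete Gronwall inequality together with $e^0=0$ then closes the estimate.

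The main obstacle is the cross term $[\mm(\phi(t_n))-\mm(\phi^n)]\mathcal Q^{n+1}$: because it is \emph{not} premultiplied by $\mm$, it receives no damping when the mobility degenerates, and its control hinges entirely on the uniform $L^\infty$ bound on $\Delta_h\phi(t_{n+1})$ furnished by the $W^{4,\infty}$-regularity, together with the Lipschitz continuity of $\mm$ on $[-1,1]$. This is precisely why an M-matrix-based analysis (which would need to absorb such cross terms through a bounded inverse of the spatial operator, requiring $\mm$ bounded away from zero) is unavailable, and why the DMP framework is indispensable here.
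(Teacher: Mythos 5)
Your proposal is correct and follows essentially the same route as the paper: the same error equation (up to a cosmetic regrouping of the stabilization term into your residual $R^{n+1}$ and $\mathcal Q^{n+1}$), the same DMP argument at spatial extrema exploiting $\Delta_h e^{n+1}_{\mbf j}\le 0$ and $\mm(\phi^n_{\mbf j})\ge 0$, the same mean-value/MBP Lipschitz bounds, and a concluding discrete Gr\"onwall step. The only minor bookkeeping difference is that under the stated $W^{2,1}(J;L^\infty)$ regularity the paper bounds the temporal truncation error by $\|\phi\|_{W^{2,1}(J_{n+1};L^\infty)}$ per step and recovers the $O(\tau)$ factor after summation, rather than asserting $\|R^{n+1}\|_\infty\le C\tau$ pointwise in time.
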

\begin{proof}
	Define the error function as $ e^{n} := \phi( t_{n} ) - \phi^{n} $. Subtracting \eqref{sch:sBE_1}--\eqref{sch:sBE_2} from \eqref{Model:MAC1}--\eqref{Model:MAC2}, one gets the following error equation:
	\begin{equation}\label{err:sBE_0}
		\begin{aligned}
			\delta_t e^{n+1}
			& =  \varepsilon^2 \mm( \phi^{n} )  \Delta_{h} e^{n+1} + \mm( \phi^{n} ) \big( f( \phi( t_{n} ) ) - f( \phi^{n} ) \big) \\
			& \quad +\big( \mm( \phi( t_{n} ) ) - \mm( \phi^{n} ) \big) \big( \varepsilon^2 \Delta_{h} \phi( t_{n+1} ) + f( \phi( t_{n} ) ) \big)
			\\
			& \quad
			+ S_{1} \mm( \phi^{n} )  ( \phi^{n+1} - \phi^{n} ) + \sum_{\ell=1}^3  \mt_{1,\ell}^{n+1},
		\end{aligned}
	\end{equation}
	or equivalently,
	\begin{equation}\label{err:sBE_1}
		\begin{aligned}
			& \left[ \f{ 1 }{ \tau_{n+1} } I + S_{1}  \mm( \phi^{n} ) - \varepsilon^{2}  \mm( \phi^{n} ) \Delta_{h} \right] e^{n+1} \\
			& = \left[ \f{ 1 }{ \tau_{n+1} } + S_{1}  \mm( \phi^{n} ) \right] e^{n} +  \mm( \phi^{n} ) \big( f( \phi( t_{n} ) ) - f( \phi^{n} ) \big) \\
			& \quad +  \big( \mm( \phi( t_{n} ) ) - \mm( \phi^{n} ) \big) \big( \varepsilon^2 \Delta_{h} \phi( t_{n+1} ) + f( \phi( t_{n} ) ) \big)
			\\
			& \quad
			+ S_{1} \mm( \phi^{n} )  ( \phi( t_{n+1} ) - \phi( t_{n} ) )  + \sum_{\ell=1}^3  \mt_{1,\ell}^{n+1},
		\end{aligned}
	\end{equation}
	where the truncation errors are given by
	$$
	\mt_{1,1}^{n+1} = \delta_t \phi( t_{n+1} ) - \phi_{t} ( t_{n+1} ), \quad \mt_{1,2}^{n+1} = \varepsilon^{2} \mm( \phi( t_{n} ) ) \big( \Delta \phi( t_{n+1} )  - \Delta_{h} \phi( t_{n+1} )  \big),
	$$
	$$
	\mt_{1,3}^{n+1} =  \mm( \phi( t_{n+1} ) ) \big( \varepsilon^{2}\Delta \phi( t_{n+1} )  + f( \phi( t_{n+1} ) )  \big) - \mm( \phi( t_{n} ) ) \big(  \varepsilon^{2}\Delta \phi( t_{n+1} )  + f( \phi( t_{n} ) )  \big).
	$$
	It is easy to verify that
	\begin{equation}\label{err:sBE_2}
		\begin{aligned}
			\| \mt_{1,1}^{n+1} \|_{\infty} &\leq C \| \phi \|_{ W^{2,1}(J_{n+1};L^{\infty}(\Omega)) }, \quad \| \mt_{1,2}^{n+1} \|_{\infty} \leq C h^{2} \| \phi \|_{ L^{\infty}(J;W^{4,\infty}(\Omega)) }, \\
			\| \mt_{1,3}^{n+1} \|_{\infty} &\leq C \| \phi \|_{ W^{1,1}(J_{n+1};L^{\infty}(\Omega)) }.
		\end{aligned}
	\end{equation}
	By the mean value theorem and the boundedness of $\phi^n$, we obtain
	\begin{equation}\label{err:sBE_4}
		\begin{aligned}
			& \| \big( \mm( \phi( t_{n} ) ) - \mm( \phi^{n} ) \big) \big( \varepsilon^2 \Delta_{h} \phi( t_{n+1} ) + f( \phi( t_{n} ) ) \big) \|_{\infty} \leq C  \| e^{n} \|_{\infty},\\
			& \| \mm( \phi^{n} ) \big( f( \phi( t_{n} ) ) - f( \phi^{n} ) \big) \|_{\infty} \leq C \| e^{n} \|_{\infty}, \\
			& \| S_{1} \mm( \phi^{n} )  ( \phi( t_{n+1} ) - \phi( t_{n} ) ) \|_{\infty} \leq C \| \phi \|_{ W^{1,1}(J_{n+1};L^{\infty}(\Omega)) }.
		\end{aligned}
	\end{equation}
	
	Then,  similar to the discussion in Theorem \ref{thm:MBP_sBE}, we assume that the error function $ e^{n+1} $ achieves its maximum value at $ \mbf{ x_{j} } \in \Omega_{h} $; that is, $ e^{n+1}_{ \mbf{j} } = \max_{ \mbf{i} \in \Omega_{h} } e^{n+1}_{ \mbf{i} } $. Taking the spatial index $ \mbf{j} $ in \eqref{err:sBE_1} and using the facts that $ \Delta_{h} e^{n+1}_\mbf{j} \leq 0 $ and $\mm(\phi^{n}_\mbf{j})\geq0$,  together with the estimates \eqref{err:sBE_2}--\eqref{err:sBE_4}, we have
	\begin{equation*}
		\begin{aligned}
			& \left[\frac{1}{\tau_{n+1}} + S_{1} \mm( \phi^{n}_\mbf{j} )\right] e^{n+1}_\mbf{j} \\
			& \leq \left[ \frac{1}{\tau_{n+1}} + S_{1} \mm( \phi^{n}_\mbf{j} ) \right] e^{n}_\mbf{j} + C  ( \| e^{n} \|_{\infty} + \| \phi \|_{ W^{2,1}(J_{n+1};L^{\infty}(\Omega)) } + h^{2} \| \phi \|_{ L^{\infty}(J;W^{4,\infty}(\Omega)) } ),
		\end{aligned}
	\end{equation*}
	which further implies
	\begin{equation*}
		\begin{aligned}
			e^{n+1}_\mbf{j} \leq e^{n}_\mbf{j} + C \tau_{n+1} \big( \| e^{n} \|_{\infty} + \| \phi \|_{ W^{2,1}(J_{n+1};L^{\infty}(\Omega)) } + h^{2} \| \phi \|_{ L^{\infty}(J;W^{4,\infty}(\Omega)) } \big).
		\end{aligned}
	\end{equation*}
	Similarly, supposing that the minimum value of $ e^{n+1} $ is attained at $ \mbf{ x_{k} } \in \Omega_{h} $ and taking the spatial index $ \mbf{k} $ in \eqref{err:sBE_1}, we can derive
	\begin{equation*}
		\begin{aligned}
			e^{n+1}_\mbf{k} \geq - e^{n}_\mbf{k} - C \tau_{n+1} \big( \| e^{n} \|_{\infty} + \| \phi \|_{ W^{2,1}(J_{n+1};L^{\infty}(\Omega)) } + h^{2} \| \phi \|_{ L^{\infty}(J;W^{4,\infty}(\Omega)) } \big).
		\end{aligned}
	\end{equation*}
	As a consequence, the error function $ e^{n+1} $ is bounded by
	\begin{equation}\label{err:sBE_6}
		\| e^{n+1} \|_{\infty}  \leq \| e^{n} \|_{\infty} + C \tau_{n+1} \big( \| e^{n} \|_{\infty} + h^{2} + \| \phi \|_{ W^{2,1}(J_{n+1};L^{\infty}(\Omega)) } \big).
	\end{equation}	
	Summing it up from $0$ to $n$ and together with the discrete Gr\"onwall's inequality yields the desired estimate.
\end{proof}

\section{Second-order stabilized Crank-Nicolson scheme}\label{Sec:CN}
In this section, we construct a second-order linear dynamically stabilized Crank-Nicolson (DsCN) scheme for model \eqref{Model:MAC1}--\eqref{Model:MAC2}. For given real sequence $ \{ v^{k} \}^{N}_{k=0} $, define the intermediate time level $ t_{k-\f{1}{2}} := (t_{k} + t_{k-1})/{2} $ and the mean value $ \bar{v}^{k-\f{1}{2}} := (v^{k} + v^{k-1})/{2}$ for $ k \geq 1 $. The DsCN scheme is then formulated as follows:

\paragraph{ \bf Step 1}
For $ n = 0 $, compute $ \phi^{1} $ using the DsBE scheme \eqref{sch:sBE_1}--\eqref{sch:sBE_2}, that is,
\begin{equation}\label{sch:2_1_1}
	\phi^{1} = \text{DsBE}( S_{1}, \phi^{0}, \mm(\phi^{0}), \tau_{1} ).
\end{equation}

\paragraph{ \bf Step 2} For $ n \geq 1 $,  given $ \phi^{n}$ and $ \phi^{n-1} $, we find $ \phi^{n+1} \in \mathbb{V}_{h} $ such that
\begin{empheq}{align}
	& \hat{\phi}^{n+\f{1}{2}} = \min\left\{  \max\left\{ \phi^{*,n}  , - 1 \right\} , 1 \right\} \mbox{ with } \phi^{*,n}:=( 1 + \f{r_{n+1}}{2}  ) \phi^{n} - \f{r_{n+1}}{2}  \phi^{n-1}, \label{sch:2_n_1} \\
	& \delta_{t} \phi^{n+1} = - \mm( \hat{\phi}^{n+\f{1}{2}} ) \mu^{n+\f{1}{2}} + \mathcal{S}_{2}^{n+\f{1}{2}} [\phi], \label{sch:2_n_2} \\
	& \mu^{n+\f{1}{2}} = - \varepsilon^2 \Delta_{h} \bar{\phi}^{n+\f{1}{2}} - f( \hat{\phi}^{n+\f{1}{2}} ) + \mathcal{S}_{1}^{n+\f{1}{2}} [\phi],  \label{sch:2_n_3}
\end{empheq}
where the dynamic stabilization terms are defined as $ \mathcal{S}_{1}^{n+\f{1}{2}} [\phi] := S_{1} \big( \bar{\phi}^{n+\f{1}{2}} - \hat{\phi}^{n+\f{1}{2}} \big) $ and $ \mathcal{S}_{2}^{n+\f{1}{2}} [\phi] := - S_{2} \tau_{n+1} ( \phi^{n+1} - \phi^{n} ) $ with $ S_{1}, S_{2} \geq 0 $ being two constant parameters.

\begin{remark}
	The first stabilization term $ \mathcal{S}_{1}^{n+\f{1}{2}} [\phi] $, introduced in \eqref{sch:2_n_3}, indicates that the scheme incorporates a dynamic stabilization term, namely
	$$
	- S_{1} \mm( \hat{\phi}^{n+\f{1}{2}} ) \big( \bar{\phi}^{n+\f{1}{2}} - \hat{\phi}^{n+\f{1}{2}} \big).
	$$
	This term is designed to dominate the nonlinear term and thereby ensure the discrete MBP. Moreover, its dynamic nature enables a mobility-robust energy stability, with the orginal discrete energy uniformly bounded by the initial energy up to a fourth-order perturbation (cf. Theorem \ref{thm:energy_2}). Alternatively, if a classical constant -- rather than dynamic -- stabilization term is employed, as suggested in \cite{MOC_2023_Ju,AAMM_2024_Hou,ANM_2025_Hou} (i.e., $ \mathcal{S}_{1}^{n+\f{1}{2}} [\phi] $ is added directly to \eqref{sch:2_n_2} instead of \eqref{sch:2_n_3}), the resulting scheme can still preserve the MBP, provided that condition \eqref{sch:sBE:e3} is satisfied. However, in such situation, the energy stability will additionally require a strictly positive lower bound assumption for the variable mobility function (see \eqref{Assu:M}), thereby ruling out degenerate mobility such as \eqref{Def:Mobi_1}. 
\end{remark}

\begin{remark}
	The second stabilization term $ \mathcal{S}_{2}^{n+\f{1}{2}} [\phi] $  in \eqref{sch:2_n_2} plays a key role in unconditionally preserving the discrete MBP (cf. Theorem \ref{thm:MBP_sCN}).
	It can also be regarded as a high-order perturbation of $ \delta_{t} \phi^{n+1} $, since \eqref{sch:2_n_2} is equivalent to
	\begin{equation*}\label{sch:2_n_2_2}
		( 1 + S_{2} \tau_{n+1}^{2} ) \delta_{t} \phi^{n+1} = - \mm( \hat{\phi}^{n+\f{1}{2}} ) \mu^{n+\f{1}{2}} \quad \text{or} \quad  \delta_{t} \phi^{n+1} = - \f{\mm( \hat{\phi}^{n+\f{1}{2}} )}{ 1 + S_{2} \tau_{n+1}^{2} } \mu^{n+\f{1}{2}}.
	\end{equation*}
	Thus, it follows that
	\begin{align*}
			( \mathcal{S}_{2}^{n+\f{1}{2}}[\phi], \mu^{n+\f{1}{2}} ) &= - \f{ S_{2} \tau_{n+1}^{2} }{1 + S_{2} \tau_{n+1}^{2}} ( \mm( \hat{\phi}^{n + \f{1}{2}} ) \mu^{n+\f{1}{2}}, \mu^{n+\f{1}{2}} ) \\
    &\leq  0 \leq ( \mm( \hat{\phi}^{n + \f{1}{2}} ) \mu^{n+\f{1}{2}}, \mu^{n+\f{1}{2}} ),
	\end{align*}
	which demonstrates that the inclusion of this stabilization term does not compromise the mobility-robust energy stability, see Theorem \ref{thm:energy_2} for details.
\end{remark}

\begin{remark}
	For the Allen–Cahn equation, second-order linear MBP-preserving schemes are often constructed by first employing an auxiliary first-order method to generate a predicted solution with local second-order accuracy while preserving the MBP \cite{MOC_2023_Ju,ANM_2025_Hou,SINUM_2022_Ju,JSC_2022_Ju}.
	This predicted solution is then used to explicitly handle the nonlinear term, which typically requires solving two linear systems per time level. In contrast, the proposed DsCN scheme obtains the predicted solution through a straightforward cut-off preprocessing step applied to the extrapolation solution. This approach enhances computational efficiency by requiring the solution of only one linear system per time level, while still preserving the MBP as shown below.
\end{remark}

\subsection{Discrete MBP}
Now, we are ready to establish the discerte MBP for the linear second-order DsCN scheme \eqref{sch:2_1_1}--\eqref{sch:2_n_3}.

\begin{theorem}[MBP of the DsCN scheme]\label{thm:MBP_sCN}
	Assume that $S_{1} \geq\left\|f^{\prime}\right\|_{C[-1, 1]}=2$. If $ S_{2} = 0 $ and the time stepsize $ \tau_{n+1} $ satisfies
	\begin{equation}\label{MBP:Condi_sCN}
		\tau_{n+1} \leq \f{ 2 }{ S_{1} K_{\mm} + 2 d  K_{\mm} \varepsilon^{2}/h^{2} } \quad \text{with} \ K_{\mm} := \max_{ \xi \in [-1,1] } \mm( \xi ),
	\end{equation}
	then the DsCN scheme \eqref{sch:2_1_1}--\eqref{sch:2_n_3} preserves the discrete MBP.
	Moreover, if $ S_{2} $ satisfies
	\begin{equation}\label{MBP:S2_sCN}
		S_{2} \geq \left( \f{S_{1}K_{\mm}}{4} + \f{ d\varepsilon^{2} K_{\mm} }{ 2 h^2 } \right)^{2},
	\end{equation}
	then the DsCN scheme \eqref{sch:2_1_1}--\eqref{sch:2_n_3} unconditionally preserves the discrete MBP.
\end{theorem}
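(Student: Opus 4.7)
The plan is to proceed by induction on $n$, using a discrete-maximum-principle (DMP) argument in the spirit of Theorem \ref{thm:MBP_sBE}, rather than the M-matrix framework used in previous second-order schemes. The base case $n = 0$ (producing $\phi^{1}$) is already covered by Theorem \ref{thm:MBP_sBE}. For the induction step, assume $ \| \phi^{k} \|_{\infty} \le 1 $ for all $ k \le n $. The cut-off in \eqref{sch:2_n_1} then gives $ \| \hat{\phi}^{n+\f{1}{2}} \|_{\infty} \le 1 $ \emph{unconditionally}, and hence $ | f( \hat{\phi}^{n+\f{1}{2}} ) + S_{1} \hat{\phi}^{n+\f{1}{2}} | \le S_{1} $ by Lemma \ref{lem:MBP_right}. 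These two facts are what allow the nonlinearity and the $ \mathcal{S}_{1}^{n+\f{1}{2}} $ stabilizer to be absorbed in the pointwise estimate below.

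Next, I would substitute \eqref{sch:2_n_3} into \eqref{sch:2_n_2} and use $ \bar{\phi}^{n+\f{1}{2}} = \f{1}{2} ( \phi^{n+1} + \phi^{n} ) $ to assemble a single evolution equation for $ \phi^{n+1} $:
\[
\Bigl[ ( 1 + S_{2} \tau_{n+1}^{2} ) I + \f{ \tau_{n+1} S_{1} }{2} \alpha - \f{ \tau_{n+1} \varepsilon^{2} }{2} \alpha \Delta_{h} \Bigr] \phi^{n+1} = \Bigl[ ( 1 + S_{2} \tau_{n+1}^{2} ) I - \f{ \tau_{n+1} S_{1} }{2} \alpha + \f{ \tau_{n+1} \varepsilon^{2} }{2} \alpha \Delta_{h} \Bigr] \phi^{n} + \tau_{n+1} \alpha \bigl[ f( \hat{\phi}^{n+\f{1}{2}} ) + S_{1} \hat{\phi}^{n+\f{1}{2}} \bigr],
\]
where $ \alpha := \mm( \hat{\phi}^{n+\f{1}{2}} ) \ge 0 $ is a pointwise nonnegative multiplier. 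Suppose $ \phi^{n+1} $ attains its maximum at some $ \mbf{x_{j}} \in \Omega_{h} $, so that $ - \Delta_{h} \phi^{n+1}_{\mbf{j}} \ge 0 $ and the new-time Laplacian term can be discarded from the LHS. The crux is the old-time RHS term $ \f{ \tau_{n+1} \varepsilon^{2} }{2} \alpha_{\mbf{j}} \Delta_{h} \phi^{n}_{\mbf{j}} $, whose sign is \emph{uncontrolled}. I would expand $ \Delta_{h} \phi^{n}_{\mbf{j}} = \f{1}{h^{2}} ( \sum_{\text{neighbors}} \phi^{n} - 2 d \phi^{n}_{\mbf{j}} ) $ and reassign the diagonal part into the coefficient of $ \phi^{n}_{\mbf{j}} $, yielding the effective coefficient
\[
\beta_{\mbf{j}} := ( 1 + S_{2} \tau_{n+1}^{2} ) - \f{ \tau_{n+1} S_{1} \alpha_{\mbf{j}} }{2} - \f{ d \tau_{n+1} \varepsilon^{2} \alpha_{\mbf{j}} }{h^{2}}.
\]
If $ \beta_{\mbf{j}} \ge 0 $, then the pointwise bounds $ \phi^{n}_{\mbf{j}} \le 1 $, $ \phi^{n}_{\text{neighbors}} \le 1 $, and $ f + S_{1} \hat{\phi} \le S_{1} $ may all be applied, and a direct tally shows the RHS collapses exactly to $ ( 1 + S_{2} \tau_{n+1}^{2} ) + \f{ \tau_{n+1} S_{1} \alpha_{\mbf{j}} }{2} $, which matches the effective LHS coefficient and gives $ \phi^{n+1}_{\mbf{j}} \le 1 $.

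It remains to translate the sign condition $ \beta_{\mbf{j}} \ge 0 $ into constraints on the parameters. Using $ \alpha_{\mbf{j}} \le K_{\mm} $, it reduces to $ A \tau_{n+1} \le 1 + S_{2} \tau_{n+1}^{2} $, with $ A := \f{ S_{1} K_{\mm} }{2} + \f{ d \varepsilon^{2} K_{\mm} }{h^{2}} $. When $ S_{2} = 0 $ this is just $ \tau_{n+1} \le 1/A $, which is \eqref{MBP:Condi_sCN}. When $ S_{2} > 0 $, the quadratic $ S_{2} \tau_{n+1}^{2} - A \tau_{n+1} + 1 $ is globally nonnegative in $ \tau_{n+1} $ iff its discriminant satisfies $ A^{2} - 4 S_{2} \le 0 $, which is exactly \eqref{MBP:S2_sCN}, yielding the unconditional case. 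A fully symmetric DMP argument at the minimum point of $ \phi^{n+1} $, using $ \phi^{n} \ge -1 $ and the lower bound $ f + S_{1} \hat{\phi} \ge -S_{1} $ from Lemma \ref{lem:MBP_right}, gives $ \phi^{n+1}_{\mbf{k}} \ge -1 $, closing the induction.

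The main obstacle is the Crank--Nicolson mixing of $ \Delta_{h} $ across the two time levels, which blocks the standard M-matrix sign analysis of backward-Euler-type schemes and forces the DMP-based expansion described above. The dynamic stabilization $ \mathcal{S}_{2}^{n+\f{1}{2}}[\phi] = -S_{2} \tau_{n+1}( \phi^{n+1} - \phi^{n} ) $ is engineered precisely to inject the extra quadratic margin $ S_{2} \tau_{n+1}^{2} $ into $ \beta_{\mbf{j}} $, which is what permits unconditional MBP preservation once $ S_{2} \ge A^{2}/4 $.
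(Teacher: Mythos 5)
Your proposal is correct and follows essentially the same route as the paper: the same induction with the cut-off guaranteeing $\|\hat{\phi}^{n+\frac{1}{2}}\|_\infty\le 1$, the same reassembled one-step equation (yours is the paper's \eqref{MBP_sCN:1} multiplied by $\tau_{n+1}$), the same DMP argument at the extremum with the old-time Laplacian split into its diagonal and neighbor parts (the paper's estimate \eqref{MBP_sCN:3a}), and the same sign condition on the effective coefficient, which you resolve via the discriminant of $S_2\tau^2 - A\tau + 1$ where the paper uses the equivalent AM--GM bound $\frac{1}{\tau_{n+1}}+S_2\tau_{n+1}\ge 2\sqrt{S_2}$.
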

\begin{proof}
	For $ n = 0 $, Theorem \ref{thm:MBP_sBE} directly yields $ \| \phi^{1} \|_{\infty} \leq 1 $. It remains to consider the case of $ n \geq 1 $. For any $ 1 \leq n \leq N-1 $, we assume that $ \| \phi^{k} \|_{\infty} \leq 1 $ holds for $ 0 \leq k \leq n $. From \eqref{sch:2_n_1}, we immediately have $ \| \hat{\phi}^{n+\f{1}{2}} \|_{\infty} \leq 1 $. Moreover, equations \eqref{sch:2_n_2}--\eqref{sch:2_n_3} can be equivalently rewritten as
	\begin{equation}\label{MBP_sCN:1}
		\begin{aligned}
			& \left[ \left( \f{1}{ \tau_{n+1} } + S_{2} \tau_{n+1} \right)I + \f{S_{1}}{2} \mm( \hat{\phi}^{n+\f{1}{2}} ) - \f{\varepsilon^2}{2} \mm( \hat{\phi}^{n+\f{1}{2}} ) \Delta_{h} \right] \phi^{n+1} \\
			& \quad = Q^{n} \phi^{n} + \mm( \hat{\phi}^{n+\f{1}{2}} ) \big( f( \hat{\phi}^{n+\f{1}{2}} ) + S_{1} \hat{\phi}^{n+\f{1}{2}} \big),
		\end{aligned}
	\end{equation}
	where the operator $Q^{n}$ is defined by
	\begin{equation}\label{MBP_sCN:1b}
		Q^{n}  = \left( \f{1}{ \tau_{n+1} } + S_{2} \tau_{n+1} \right) I  - \f{S_{1}}{2} \mm( \hat{\phi}^{n+\f{1}{2}} ) + \f{\varepsilon^2}{2} \mm( \hat{\phi}^{n+\f{1}{2}} ) \Delta_{h}.
	\end{equation}
	
	Suppose that the numerical solution $ \phi^{n+1} $ achieves its maximum value at $ \mbf{x_{j}} \in \Omega_{h} $. Taking the spatial index $ \mbf{j} $ in \eqref{MBP_sCN:1} and noting that $ \Delta_{h} \phi^{n+1}_\mbf{j} \leq 0 $ together with Lemma \ref{lem:MBP_right}, we get
	\begin{equation}\label{MBP_sCN:2}
		\left[ \f{1}{ \tau_{n+1} } + S_{2} \tau_{n+1}
		+ \f{S_{1}}{2} \mm( \hat{\phi}^{n+\f{1}{2}}_\mbf{j} ) \right] \phi^{n+1}_\mbf{j}
		\leq  Q_{\mbf{j}}^{n} \phi^{n}_\mbf{j}  + S_{1} \mm( \hat{\phi}^{n+\f{1}{2}}_\mbf{j} ).
	\end{equation}
	We then give the estimate of $Q_{\mbf{j}}^{n} \phi^{n}_\mbf{j}$ in \eqref{MBP_sCN:2}. It follows from the definition of $\Delta_h$ and the inductive hypothesis $ \| \phi^{n} \|_{\infty} \leq 1 $ that
	\begin{equation}\label{MBP_sCN:3a}	
		\f{ 2 d }{ h^2 } ( - 1 - \phi^{n}_{ \mbf{j} } )
		\le \f{ 2 d }{ h^2 } ( - \| \phi^{n} \|_{\infty} - \phi^{n}_{ \mbf{j} } )
		\le \Delta_{h} \phi^{n}_{ \mbf{j} }
		\le \f{ 2 d }{ h^2 } ( \| \phi^{n} \|_{\infty} - \phi^{n}_{ \mbf{j} } )
		\le \f{ 2 d }{ h^2 } ( 1 - \phi^{n}_{ \mbf{j} } ).
	\end{equation}
	Then, together with the definition of $Q^{n}$ in \eqref{MBP_sCN:1b}, we obtain
	\begin{equation*}\label{MBP_sCN:3b}	
		\begin{aligned}
			Q_{\mbf{j}}^{n} \phi^{n}_\mbf{j} \leq \left[ \f{1}{ \tau_{n+1} } + S_{2} \tau_{n+1} - \f{S_{1}}{2} \mm( \hat{\phi}^{n+\f{1}{2}}_\mbf{j} ) - \f{d \varepsilon^{2}}{ h^{2} } \mm( \hat{\phi}^{n+\f{1}{2}}_\mbf{j} ) \right] \phi^{n}_\mbf{j} + \f{d \varepsilon^{2}}{ h^{2} } \mm( \hat{\phi}^{n+\f{1}{2}}_\mbf{j} ).
		\end{aligned}
	\end{equation*}
	If $ S_{2} = 0 $ and  the time-step condition \eqref{MBP:Condi_sCN}, or $S_{2}$ satisfies \eqref{MBP:S2_sCN},  we have
	\begin{align*}
	&\f{1}{ \tau_{n+1} } + S_{2} \tau_{n+1} - \f{S_{1}}{2} \mm( \hat{\phi}^{n+\f{1}{2}}_\mbf{j} ) - \f{d \varepsilon^{2}}{ h^{2} } \mm( \hat{\phi}^{n+\f{1}{2}}_\mbf{j} ) \\
    &\geq 2 \sqrt{ S_{2} } - \f{S_{1}}{2} \mm( \hat{\phi}^{n+\f{1}{2}}_\mbf{j} ) - \f{d \varepsilon^{2}}{ h^{2} } \mm( \hat{\phi}^{n+\f{1}{2}}_\mbf{j} ) \geq 0,
	\end{align*}
	which together with $\phi^{n}_\mbf{j}\leq1$ gives
	\begin{equation}\label{MBP_sCN:5}
		Q_{\mbf{j}}^{n} \phi^{n}_\mbf{j} \leq \f{1}{ \tau_{n+1} } + S_{2} \tau_{n+1} - \f{S_{1}}{2} \mm( \hat{\phi}^{n+\f{1}{2}}_\mbf{j} ).
	\end{equation}
	By inserting \eqref{MBP_sCN:5} into \eqref{MBP_sCN:2}, it yields $ \phi^{n+1}_{\mbf{j}} \leq 1 $.
	
	Similarly, if the minimum value of $ \phi^{n+1} $ is achieved at $ \mbf{x_{k}} \in \Omega_{h} $, we have
	\begin{equation}\label{MBP_sCN:7}
		\begin{aligned}
			\left[ \f{1}{ \tau_{n+1} } + S_{2} \tau_{n+1} + \f{S_{1}}{2} \mm( \hat{\phi}^{n+\f{1}{2}}_\mbf{k} ) \right] \phi^{n+1}_\mbf{k} \geq  Q_{\mbf{k}}^{n} \phi^{n}_\mbf{k} - S_{1} \mm( \hat{\phi}^{n+\f{1}{2}}_\mbf{k} ).
		\end{aligned}
	\end{equation}
	When $ S_{2} = 0 $ and $ \tau_{n+1} $ satisfy \eqref{MBP:Condi_sCN}, or  $ S_{2}  $ satisfies \eqref{MBP:S2_sCN}, we can also derive that
	\begin{equation*}
		\begin{aligned}
			Q_{\mbf{k}}^{n} \phi^{n}_\mbf{k} & \geq \left[ \f{1}{ \tau_{n+1} } + S_{2} \tau_{n+1} - \f{S_{1}}{2} \mm( \hat{\phi}^{n+\f{1}{2}}_\mbf{k} ) - \f{d \varepsilon^{2}}{ h^{2} } \mm( \hat{\phi}^{n+\f{1}{2}}_\mbf{k} ) \right] \phi^{n}_\mbf{k} - \f{d \varepsilon^{2}}{ h^{2} } \mm( \hat{\phi}^{n+\f{1}{2}}_\mbf{k} ) \\
			& \geq - \f{1}{ \tau_{n+1} } - S_{2} \tau_{n+1} + \f{S_{1}}{2} \mm( \hat{\phi}^{n+\f{1}{2}}_\mbf{k} ),
		\end{aligned}
	\end{equation*}
	where \eqref{MBP_sCN:3a} and $ \| \phi^{n} \|_{\infty} \leq 1 $ have been used. Inserting this estimate in to \eqref{MBP_sCN:7}, one gets $ \phi^{n+1}_\mbf{k} \geq -1 $. As a result, the discrete MBP is proved.
\end{proof}

\begin{remark}
	The conditions \eqref{MBP:Condi_sCN} and \eqref{MBP:S2_sCN} imply that the admissible choice of the time stepsize $ \tau_{n+1} $ or the stabilization parameter $ S_{2} $ is constrained by the spatial mesh size $ h $. For the Allen--Cahn equation, the parameter $ \varepsilon \ll 1 $ characterizes the width of the diffusive interface. In practice, one usually selects a spatial resolution $ h = \mo(\varepsilon) $ to accurately capture the interface dynamics. Hence, the mesh-dependent restrictions on $ \tau_{n+1} $ or $ S_{2} $ prescribed by \eqref{MBP:Condi_sCN} or \eqref{MBP:S2_sCN} are practically reasonable.
\end{remark}

\subsection{Maximum-norm error estimate}
We establish the maximum-norm error estimate of the DsCN scheme \eqref{sch:2_1_1}--\eqref{sch:2_n_3} for the Allen--Cahn equation \eqref{Model:MAC1}--\eqref{Model:MAC2} with a general mobility as follows.
\begin{theorem}[Error estimate of the DsCN scheme]\label{thm:Conver_sCN}
	Assume that the conditions in Theorem \ref{thm:MBP_sCN} hold and $ \phi \in W^{3,1} ( J; L^{\infty}( \Omega ) ) \cap W^{2,1}(J;W^{2,\infty}(\Omega)) \cap L^{\infty} (J; W^{4,\infty}( \Omega ) ) $. Then, the solution of the DsCN scheme \eqref{sch:2_1_1}--\eqref{sch:2_n_3} is convergent in the maximum norm, that is,
	\begin{equation*}\label{Conclu:Cover_sCN}
		\| \phi( t_{n} ) - \phi^{n} \|_{\infty} \leq C ( \tau^{2} + h^2 ) \quad \text{for} \   n \geq 0.
	\end{equation*}
\end{theorem}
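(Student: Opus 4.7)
The plan is to follow the structure of Theorem \ref{thm:Conver_sBE}, but at the level of the Crank-Nicolson-type operator that already appears in \eqref{MBP_sCN:1}, with a dedicated analysis of the cut-off predictor. Set $e^n := \phi(t_n) - \phi^n$. For the initialization, since $\phi^1$ is produced by a single DsBE step with $e^0 = 0$, the local bound \eqref{err:sBE_6} gives $\|e^1\|_\infty \le C\tau_1(\tau_1 + h^2) \le C(\tau^2 + h^2)$, a start-up of the required order.

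For $n \ge 1$, I would apply the scheme-side operator in \eqref{MBP_sCN:1} to $\phi(t_{n+1})$ and subtract \eqref{MBP_sCN:1} itself to arrive at an error equation of the form
$$\Bigl[\bigl(\tfrac{1}{\tau_{n+1}} + S_2\tau_{n+1}\bigr)I + \tfrac{S_1}{2}\mm(\hat\phi^{n+\f{1}{2}}) - \tfrac{\varepsilon^2}{2}\mm(\hat\phi^{n+\f{1}{2}})\Delta_h\Bigr] e^{n+1} = Q^n e^n + \mt_2^{n+\f{1}{2}},$$
where $Q^n$ is the operator from \eqref{MBP_sCN:1b} and $\mt_2^{n+\f{1}{2}}$ collects the truncation contributions: (i) the time-consistency terms $\delta_t\phi(t_{n+1}) - \phi_t(t_{n+\f{1}{2}})$ and $S_2\tau_{n+1}(\phi(t_{n+1}) - \phi(t_n))$, each of order $\tau^2$ by the regularity hypothesis; (ii) the discrete Laplacian error $\varepsilon^2 \mm(\cdot)(\Delta - \Delta_h)\phi$ of order $h^2$ together with the midpoint spatial error $\Delta_h(\bar\phi(t_{n+\f{1}{2}}) - \phi(t_{n+\f{1}{2}}))$ of order $\tau^2$; (iii) the Lipschitz differences $[\mm(\phi(t_{n+\f{1}{2}})) - \mm(\hat\phi^{n+\f{1}{2}})]$ and $[f(\phi(t_{n+\f{1}{2}})) - f(\hat\phi^{n+\f{1}{2}})]$ multiplied by uniformly bounded factors; and (iv) the dynamic-stabilization residue $S_1\mm(\hat\phi^{n+\f{1}{2}})(\bar\phi(t_{n+\f{1}{2}}) - \hat\phi^{n+\f{1}{2}})$.

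The crucial new step is to estimate $\|\phi(t_{n+\f{1}{2}}) - \hat\phi^{n+\f{1}{2}}\|_\infty$, which drives (iii)--(iv). Taylor-expanding $\phi(t_n)$ and $\phi(t_{n-1})$ around $t_{n+\f{1}{2}}$ shows that, thanks to the weight choice $(1 + r_{n+1}/2, -r_{n+1}/2)$, the linear-in-$\tau$ term cancels identically, hence $\|\phi^*(t_{n+\f{1}{2}}) - \phi(t_{n+\f{1}{2}})\|_\infty \le C\tau^2$, where $\phi^*(t_{n+\f{1}{2}}) := (1 + r_{n+1}/2)\phi(t_n) - (r_{n+1}/2)\phi(t_{n-1})$. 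Because $\|\phi(t_{n+\f{1}{2}})\|_\infty \le 1$ by the continuous MBP and the cut-off projection $\min\{\max\{\cdot,-1\},1\}$ is $1$-Lipschitz,
$$\|\hat\phi^{n+\f{1}{2}} - \phi(t_{n+\f{1}{2}})\|_\infty \le \|\phi^{*,n} - \phi(t_{n+\f{1}{2}})\|_\infty \le C\tau^2 + \bigl(1 + \tfrac{\sigma}{2}\bigr)\|e^n\|_\infty + \tfrac{\sigma}{2}\|e^{n-1}\|_\infty.$$
Combining with the Lipschitz continuity of $\mm$ and $f$ on $[-1,1]$ (valid by Theorem \ref{thm:MBP_sCN}) and with $\|\bar\phi(t_{n+\f{1}{2}}) - \phi(t_{n+\f{1}{2}})\|_\infty \le C\tau^2$, every item in (iii)--(iv) is pointwise controlled by $C(\tau^2 + \|e^n\|_\infty + \|e^{n-1}\|_\infty)$, so $\|\mt_2^{n+\f{1}{2}}\|_\infty \le C(\tau^2 + h^2 + \|e^n\|_\infty + \|e^{n-1}\|_\infty)$.

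To close the estimate, I would mirror the DMP argument of Theorem \ref{thm:MBP_sCN}: evaluate the error equation at the spatial maximizer $\mbf{x_j}$ of $e^{n+1}$, use $\Delta_h e^{n+1}_{\mbf{j}} \le 0$ on the left and $\Delta_h e^n_{\mbf{j}} \le (2d/h^2)(\|e^n\|_\infty - e^n_{\mbf{j}})$ on the right. Under condition \eqref{MBP:Condi_sCN} or \eqref{MBP:S2_sCN}, the coefficient of $e^n_{\mbf{j}}$ in the resulting scalar inequality is nonnegative, allowing its replacement by $\|e^n\|_\infty$. Symmetrically at the minimizer, one obtains
$$\|e^{n+1}\|_\infty \le (1 + C\tau_{n+1})\|e^n\|_\infty + C\tau_{n+1}\|e^{n-1}\|_\infty + C\tau_{n+1}(\tau^2 + h^2),$$
and the theorem follows from a two-step discrete Gr\"onwall inequality together with $\|e^1\|_\infty \le C(\tau^2 + h^2)$. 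The main obstacle, relative to the first-order proof, is the careful bookkeeping of the cut-off predictor: one must verify that the projection never inflates the second-order extrapolation accuracy, and that the dynamic stabilization residue $S_1\mm(\hat\phi^{n+\f{1}{2}})(\bar\phi^{n+\f{1}{2}} - \hat\phi^{n+\f{1}{2}})$ splits into a piece fully absorbed into the implicit operator in \eqref{MBP_sCN:1} on the left of the error equation and a consistency piece inside $\mt_2^{n+\f{1}{2}}$ with no surviving $\|e^{n+1}\|_\infty$ coupling on the right.
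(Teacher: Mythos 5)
Your proposal is correct and follows essentially the same route as the paper: the same operator-form error equation with $Q^n$ on the right, the same splitting of the dynamic-stabilization residue via $\bar{\phi}^{n+\f{1}{2}} - \hat{\phi}^{n+\f{1}{2}} = -\bar{e}^{n+\f{1}{2}} + (\bar{\phi}(t_{n+\f{1}{2}}) - \phi(t_{n+\f{1}{2}})) + \hat{e}^{n+\f{1}{2}}$, the same contractivity argument for the cut-off predictor combined with the second-order extrapolation consistency, the same DMP evaluation at the spatial maximizer and minimizer under \eqref{MBP:Condi_sCN} or \eqref{MBP:S2_sCN}, and the same two-level discrete Gr\"onwall closure. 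No gaps.
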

\begin{proof}
	Note that a useful estimate \eqref{err:sBE_6} for $ n = 0 $ indicates that
	\begin{equation}\label{err:sCN_1}
		\| e^{1} \|_{\infty}  \leq C \tau_{1} \big( \tau_{1} + h^2 \big).
	\end{equation}
	For $ n \geq 1 $, we subtract \eqref{sch:2_n_2}--\eqref{sch:2_n_3} from \eqref{Model:MAC1}--\eqref{Model:MAC2} to derive the error equation:
	\begin{equation}\label{err:sCN_2}
		\begin{aligned}
			\delta_t e^{n+1}
			& = \varepsilon^2 \mm( \hat{\phi}^{n+\f{1}{2}} ) \Delta_{h} \bar{e}^{n+\f{1}{2}} + \mm( \hat{\phi}^{n+\f{1}{2}} ) \big(  f( \phi( t_{n+\f{1}{2}} ) ) - f( \hat{\phi}^{n+\f{1}{2}} ) \big) \\
			& \quad +\big( \mm( \phi( t_{n+\f{1}{2}} ) ) - \mm( \hat{\phi}^{n+\f{1}{2}} ) \big) \left( \varepsilon^2 \Delta_{h} \bar{\phi}( t_{n+\f{1}{2}} ) + f( \phi( t_{n+\f{1}{2}} ) ) \right) \\
			& \quad
			+ S_{1} \mm( \hat{\phi}^{n+\f{1}{2}} )  ( \bar{\phi}^{n+\f{1}{2}} - \hat{\phi}^{n+\f{1}{2}} ) + S_{2} \tau_{n+1} ( \phi^{n+1} - \phi^{n} ) \\
			& \quad
			+ \mt_{2,1}^{n+1} + \mt_{2,2}^{n+1},
		\end{aligned}
	\end{equation}
	where $ \bar{e}^{n+\f{1}{2}} := \phi( t_{n+\f{1}{2}} ) - \bar{\phi}^{n+\f{1}{2}} $ and the truncation errors are given by
	$$
	\mt_{2,1}^{n+1} = \delta_t \phi( t_{n+1} ) - \phi_{t} ( t_{n+\f{1}{2}} ), \quad \mt_{2,2}^{n+1} = \varepsilon^{2} \mm( \phi( t_{n+\f{1}{2}} ) ) \left( \Delta \phi( t_{n+\f{1}{2}} ) - \Delta_{h} \bar{\phi}( t_{n+\f{1}{2}} ) \right).
	$$
	Moreover, we have following estimates for the truncation errors
	\begin{equation}\label{err:sCN_3}
		\begin{aligned}
			\| \mt_{2,1}^{n+1} \|_{\infty} &\leq C \tau_{n+1} \| \phi \|_{ W^{3,1}(J_{n+1};L^{\infty}(\Omega)) },\\
			\| \mt_{2,2}^{n+1} \|_{\infty} &\leq C \big( \tau_{n+1} \| \phi \|_{ W^{2,1}(J_{n+1};W^{2,\infty}(\Omega)) } + h^{2} \| \phi \|_{ L^{\infty}(J;W^{4,\infty}(\Omega)) } \big).
		\end{aligned}
	\end{equation}
	Observe that
	\begin{equation}\label{err:phi_phihat}
		\bar{\phi}^{n+\f{1}{2}} - \hat{\phi}^{n+\f{1}{2}} = - \bar{e}^{n+\f{1}{2}} + \bar{\phi}( t_{n+\f{1}{2}} ) - \phi( t_{n+\f{1}{2}} ) + \hat{e}^{n+\f{1}{2}},
	\end{equation}
	with $ \hat{e}^{n+\f{1}{2}} := \phi( t_{n+\f{1}{2}} ) - \hat{\phi}^{n+\f{1}{2}}.$
	Then, substituting \eqref{err:phi_phihat} into \eqref{err:sCN_2} yields
	\begin{equation}\label{err:sCN_4}
		\begin{aligned}
			& \left[ \left( \frac{1}{\tau_{n+1}} + S_{2} \tau_{n+1} \right) I + \f{S_{1}}{2} \mm( \hat{\phi}^{n+\f{1}{2}} ) - \f{\varepsilon^{2}}{2} \mm( \hat{\phi}^{n+\f{1}{2}} ) \Delta_{h} \right] e^{n+1} \\
			& = Q^{n} e^{n} + \big( \mm( \phi( t_{n+\f{1}{2}} ) ) - \mm( \hat{\phi}^{n+\f{1}{2}} ) \big) \big( \varepsilon^2 \Delta_{h} \bar{\phi}( t_{n+\f{1}{2}} ) + f( \phi( t_{n+\f{1}{2}} ) ) \big) \\
			& \quad +  \mm( \hat{\phi}^{n+\f{1}{2}} ) \big( f( \phi( t_{n+\f{1}{2}} ) ) - f( \hat{\phi}^{n+\f{1}{2}} ) \big)
			+ S_{2} \tau_{n+1} ( \phi( t_{n+1} ) - \phi( t_{n} ) ) \\
			& \quad + S_{1} \mm( \hat{\phi}^{n+\f{1}{2}} )  \big( \bar{\phi}( t_{n+\f{1}{2}} ) - \phi( t_{n+\f{1}{2}} ) + \hat{e}^{n+\f{1}{2}} \big)
			+ \mt_{2,1}^{n+1} + \mt_{2,2}^{n+1}.
		\end{aligned}
	\end{equation}
	For the right-hand side terms of \eqref{err:sCN_4}, by applying the mean value theorem and the discrete MBP law, it holds that
	\begin{equation*}\label{err:sCN_5}
		\begin{aligned}
			& \| \big( \mm( \phi( t_{n+\f{1}{2}} ) ) - \mm( \hat{\phi}^{n+\f{1}{2}} ) \big) \big( \varepsilon^2 \Delta_{h} \bar{\phi}( t_{n+\f{1}{2}} ) + f( \phi( t_{n+\f{1}{2}} ) ) \big) \|_{\infty} \leq C \| \hat{e}^{n+\f{1}{2}} \|_{\infty},\\
			& \| \mm( \hat{\phi}^{n+\f{1}{2}} ) \big( f( \phi( t_{n+\f{1}{2}} ) ) - f( \hat{\phi}^{n+\f{1}{2}} ) \big) \|_{\infty} \leq C \| \hat{e}^{n+\f{1}{2}} \|_{\infty}, \\
			& \|  S_{2} \tau_{n+1} ( \phi( t_{n+1} ) - \phi( t_{n} ) ) + S_{1} \mm( \hat{\phi}^{n+\f{1}{2}} )  \big( \bar{\phi}( t_{n+\f{1}{2}} ) - \phi( t_{n+\f{1}{2}} ) \|_{\infty} \\
			& \leq C \tau_{n+1} ( \| \phi \|_{ W^{1,1}(J_{n+1};L^{\infty}(\Omega)) } + \| \phi \|_{ W^{2,1}(J_{n+1};L^{\infty}(\Omega)) } ).
		\end{aligned}
	\end{equation*}
	
	Assume that the maximum value of $ e^{n+1} $ is achieved at $ \mbf{ x_{j} } \in \Omega_{h} $, i.e., $ e^{n+1}_{ \mbf{j} } = \max_{ \mbf{i} \in \Omega_{h} } e^{n+1}_{ \mbf{i} } $. By inserting \eqref{err:sCN_3} and the above estimates into \eqref{err:sCN_4}, and noting $ \Delta_{h} \phi^{n+1}_\mbf{j} \leq 0 $,  one gets
	\begin{equation*}
		\begin{aligned}
			& \left[ \frac{1}{\tau_{n+1}} + S_{2} \tau_{n+1} + \f{S_{1}}{2} \mm( \hat{\phi}^{n+\f{1}{2}}_{ \mbf{j} } )
			\right] e^{n+1}_{\mbf{j}} \\
			& \leq Q^{n}_{ \mbf{j} } e^{n}_{\mbf{j}} + C ( \| \hat{e}^{n+\f{1}{2}} \|_{\infty} + h^{2} \| \phi \|_{ L^{\infty}(J;W^{4,\infty}(\Omega)) } ) \\
			& \quad  + C \tau_{n+1} ( \| \phi \|_{ W^{3,1}(J_{n+1};L^{\infty}(\Omega)) } + \| \phi \|_{ W^{2,1}(J_{n+1};W^{2,\infty}(\Omega)) } ).
		\end{aligned}
	\end{equation*}
	Furthermore, under the condition \eqref{MBP:Condi_sCN} or \eqref{MBP:S2_sCN}, we can deduce a similar result of \eqref{MBP_sCN:3b} that
	\begin{equation*}
		\begin{aligned}
			Q_{\mbf{j}}^{n} e^{n}_\mbf{j} & \leq \left[ \f{1}{ \tau_{n+1} } + S_{2} \tau_{n+1} - \f{S_{1}}{2} \mm( \hat{\phi}^{n+\f{1}{2}}_\mbf{j} ) - \f{d \varepsilon^{2}}{ h^{2} } \mm( \hat{\phi}^{n+\f{1}{2}}_\mbf{j} ) \right] e^{n}_\mbf{j}
			+ \f{d \varepsilon^{2}}{ h^{2} } \mm( \hat{\phi}^{n+\f{1}{2}}_\mbf{j} ) \| e^{n} \|_{\infty} \\
			& \leq \left[ \f{1}{ \tau_{n+1} } + S_{2} \tau_{n+1} - \f{S_{1}}{2} \mm( \hat{\phi}^{n+\f{1}{2}}_\mbf{j} ) \right] \| e^{n} \|_{\infty},
		\end{aligned}
	\end{equation*}
	where the property \eqref{MBP_sCN:3a} of $ \Delta_{h} $ has been used. Furthermore, this yields
	\begin{equation*}
		\begin{aligned}
			& \left[ \frac{1}{\tau_{n+1}} + S_{2} \tau_{n+1} + \f{S_{1}}{2} \mm( \hat{\phi}^{n+\f{1}{2}}_{ \mbf{j} } ) \right] e^{n+1}_{\mbf{j}} \\
			& \leq \left[ \frac{1}{\tau_{n+1}} + S_{2} \tau_{n+1} - \f{S_{1}}{2} \mm( \hat{\phi}^{n+\f{1}{2}}_{ \mbf{j} } ) \right] \| e^{n} \|_{\infty}  + C ( \| \hat{e}^{n+\f{1}{2}} \|_{\infty} + h^{2} \| \phi \|_{ L^{\infty}(J;W^{4,\infty}(\Omega)) } ) \\
			& \quad  + C \tau_{n+1} ( \| \phi \|_{ W^{3,1}(J_{n+1};L^{\infty}(\Omega)) } + \| \phi \|_{ W^{2,1}(J_{n+1};W^{2,\infty}(\Omega)) } )
		\end{aligned}
	\end{equation*}
	which implies
	\begin{equation*}
		\begin{aligned}
			e^{n+1}_{\mbf{j}} &\leq \| e^{n} \|_{\infty} + C \tau_{n+1} [ \| \hat{e}^{n+\f{1}{2}} \|_{\infty} + h^{2} \| \phi \|_{ L^{\infty}(J;W^{4,\infty}(\Omega)) } \\
			& \quad +\tau^{2}( \| \phi \|_{ W^{3,1}(J_{n+1};L^{\infty}(\Omega)) } + \| \phi \|_{ W^{2,1}(J_{n+1};W^{2,\infty}(\Omega)) } )].
		\end{aligned}
	\end{equation*}
	Similarly, the minimum value $ e^{n+1}_{ \mbf{k} } = \min_{ \mbf{x_i} \in \Omega_{h} } e^{n+1}_{ \mbf{i} } $ of $ e^{n+1} $ can be estimated by
	\begin{equation*}
		\begin{aligned}
			e^{n+1}_{\mbf{k}} & \geq - \| e^{n} \|_{\infty} - C \tau_{n+1} [ \| \hat{e}^{n+\f{1}{2}} \|_{\infty} + h^{2} \| \phi \|_{ L^{\infty}(J;W^{4,\infty}(\Omega)) }  \\
			& \quad -  \tau^{2} ( \| \phi \|_{ W^{3,1}(J_{n+1};L^{\infty}(\Omega)) } + \| \phi \|_{ W^{2,1}(J_{n+1};W^{2,\infty}(\Omega)) } )],
		\end{aligned}
	\end{equation*}
	which further implies
	\begin{equation}\label{err:sCN_10}
		\begin{aligned}
			\| e^{n+1} \|_{\infty} &\leq \| e^{n} \|_{\infty} + C \tau_{n+1} [ \| \hat{e}^{n+\f{1}{2}} \|_{\infty} + h^{2} \| \phi \|_{ L^{\infty}(J;W^{4,\infty}(\Omega)) } \\
			&\quad+ \tau^{2} ( \| \phi \|_{ W^{3,1}(J_{n+1};L^{\infty}(\Omega)) } + \| \phi \|_{ W^{2,1}(J_{n+1};W^{2,\infty}(\Omega)) } )].
		\end{aligned}
	\end{equation}
	
	Since the cut-off operator in \eqref{sch:2_n_1} is contractive (due to the MBP-preserving property of the exact solution), it follows that
	\begin{equation}\label{err:sCN_11}
		\begin{aligned}
			\| \hat{e}^{n+\f{1}{2}} \|_{\infty} & \leq \| \phi( t_{ n+ \f{1}{2} } ) - \left( 1 + \f{r_{n+1}}{2} \right) \phi ( t_{n} ) + \f{r_{n+1}}{2} \phi ( t_{n-1} ) \|_{\infty} \\
			& \quad + \left( 1 + \f{r_{n+1}}{2} \right) \| e^{n} \|_{\infty} + \f{r_{n+1}}{2} \| e^{n-1} \|_{\infty} \\
			& \leq C ( \| e^{n} \|_{\infty} + \| e^{n-1} \|_{\infty} + \tau^{2} \| \phi \|_{ W^{2,1}(J_{n}\cup J_{n+1};L^{\infty}(\Omega)) } ).
		\end{aligned}
	\end{equation}
	Thus, inserting \eqref{err:sCN_11} into \eqref{err:sCN_10}, one further get
	\begin{equation*}
		\begin{aligned}
			\| e^{n+1} \|_{\infty} & \leq \| e^{n} \|_{\infty} + C \tau_{n+1} [ \| e^{n} \|_{\infty} + \| e^{n-1} \|_{\infty} + h^{2} \| \phi \|_{ L^{\infty}(J;W^{4,\infty}(\Omega)) } \\
			& \quad + \tau^{2} ( \| \phi \|_{ W^{3,1}(J_{n}\cup J_{n+1};L^{\infty}(\Omega)) } + \| \phi \|_{ W^{2,1}(J_{n+1};W^{2,\infty}(\Omega)) } )].
		\end{aligned}
	\end{equation*}
	Summing up the above inequality from 1 to $n$ gives
	\begin{equation*}
		\begin{aligned}
			\| e^{n+1} \|_{\infty} \leq \| e^{1} \|_{\infty} + C \sum^{n}_{k=1} \tau_{k+1} ( \| e^{k} \|_{\infty} + \| e^{k-1} \|_{\infty}  ) + C ( \tau^{2} + h^{2} ).
		\end{aligned}
	\end{equation*}
	Then, we apply the discrete Gr\"onwall inequality and the estimate \eqref{err:sCN_1} to obtain
	\begin{equation*}
		\begin{aligned}
			\| e^{n+1} \|_{\infty} \leq  C ( \| e^{1} \|_{\infty} + \tau^{2} + h^{2} ) \leq C ( \tau^{2} + h^{2} ) \quad \text{for} \  n \geq 1.
		\end{aligned}
	\end{equation*}
	Thus, the proof is completed.
\end{proof}

\begin{remark}\label{rem:bar_hat} Based upon the final estimate in Theorem \ref{thm:Conver_sCN}, an improved error estimate for $ \hat{e}^{ n + \f{1}{2} } $ in \eqref{err:sCN_11} can be followed
	\begin{equation*}
		\| \hat{e}^{ n + \f{1}{2} } \|_{\infty} \leq C ( \tau^{2} + h^{2} ) \quad \text{for} \   n \geq 1,
	\end{equation*}
	which indicates that $ \hat{\phi}^{n+\frac{1}{2}} $ obtained in the prediction step also possesses second-order accuracy.
	This result, together with \eqref{err:phi_phihat}, further implies that
	\begin{equation}\label{err:phi_phihat2}
		\| \bar{\phi}^{n+\f{1}{2}} - \hat{\phi}^{n+\f{1}{2}} \|_{\infty} \leq C ( \tau^{2} + h^{2} )\quad \text{for} \   n \geq 1.
	\end{equation}
\end{remark}

\subsection{Discrete energy stability}
In this subsection, we establish the energy stability of the DsCN scheme \eqref{sch:2_1_1}--\eqref{sch:2_n_3},
by making use of the discrete MBP (cf. Theorem \ref{thm:MBP_sCN}) together with the maximum-norm error estimate (cf. Theorem \ref{thm:Conver_sCN}).
\begin{theorem}[Energy stability of the DsCN scheme]\label{thm:energy_2}
	Assume that the conditions in Theorem \ref{thm:Conver_sCN} hold. Moreover, if $ \tau_{n+1} \leq \min\left\{ 1, \f{1}{4 K_{\mm} ( 1 + S_{2} ) } \right\} $, the DsCN scheme \eqref{sch:2_1_1}--\eqref{sch:2_n_3} is energy-stable in the sense that
	\begin{equation}\label{EDL:sch2_1}
		E_h[\phi^{1}] \le E_h[\phi^{0}]  \quad \mbox{and} \quad E_h[\phi^{n+1}] - E_h[\phi^{n}] \leq C \tau_{n+1} ( \tau^{4} + h^{4} )\  \  \  \text{for} \   n \geq 1,
	\end{equation}
	and consequently,
	\begin{equation}\label{EDL:sch2_2}
		E_h[\phi^{n+1}] \le E_h[\phi^{0}] + C T ( \tau^{4} + h^{4} )\  \  \  \text{for} \   n \geq 1,
	\end{equation}
	i.e., the discrete original energy is uniformly bounded by the initial energy plus a constant of order $ \mo ( \tau^{4} + h^{4} ) $.
\end{theorem}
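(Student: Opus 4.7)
The inequality $E_h[\phi^1]\le E_h[\phi^0]$ is immediate from Theorem~\ref{thm:energy_1} applied to the DsBE initialization \eqref{sch:2_1_1}, so the substance lies in the one-step bound for $n\ge1$. I would first derive an exact discrete energy identity by testing \eqref{sch:2_n_2} with $\tau_{n+1}\mu^{n+\frac12}$ and \eqref{sch:2_n_3} with $\nabla_\tau\phi^{n+1}$. The first test, after using the dynamic form of $\mathcal{S}_2^{n+\frac12}[\phi]$, produces both
\begin{equation*}
\langle\nabla_\tau\phi^{n+1},\mu^{n+\frac12}\rangle=-\frac{\tau_{n+1}}{1+S_2\tau_{n+1}^2}\langle\mm(\hat\phi^{n+\frac12})\mu^{n+\frac12},\mu^{n+\frac12}\rangle
\end{equation*}
and the pointwise scheme identity $\nabla_\tau\phi^{n+1}=-\tau_{n+1}\mm(\hat\phi^{n+\frac12})\mu^{n+\frac12}/(1+S_2\tau_{n+1}^2)$, which will be the crucial bridge between $\nabla_\tau\phi^{n+1}$ and the dissipation functional. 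The second test, combined with the Crank--Nicolson quadratic identity $-\langle\Delta_h\bar\phi^{n+\frac12},\nabla_\tau\phi^{n+1}\rangle=\frac12\langle-\Delta_h\phi^{n+1},\phi^{n+1}\rangle-\frac12\langle-\Delta_h\phi^n,\phi^n\rangle$, yields the telescoping Dirichlet-energy piece. The pivotal algebraic input is the exact identity, valid because $F(\phi)=\frac14(1-\phi^2)^2$ is a quartic,
\begin{equation*}
F(\phi^{n+1})-F(\phi^n)=-f(\bar\phi^{n+\frac12})\nabla_\tau\phi^{n+1}+\frac{F'''(\bar\phi^{n+\frac12})}{24}(\nabla_\tau\phi^{n+1})^3.
\end{equation*}
Writing $f(\hat\phi^{n+\frac12})=f(\bar\phi^{n+\frac12})+(f(\hat\phi^{n+\frac12})-f(\bar\phi^{n+\frac12}))$ and assembling everything yields
\begin{equation*}
E_h[\phi^{n+1}]-E_h[\phi^n]+\frac{\tau_{n+1}\langle\mm(\hat\phi^{n+\frac12})\mu^{n+\frac12},\mu^{n+\frac12}\rangle}{1+S_2\tau_{n+1}^2}=R_1+R_2+R_3,
\end{equation*}
with $R_1=\frac{1}{24}\langle F'''(\bar\phi^{n+\frac12})(\nabla_\tau\phi^{n+1})^3,1\rangle$, $R_2=-\langle f(\bar\phi^{n+\frac12})-f(\hat\phi^{n+\frac12}),\nabla_\tau\phi^{n+1}\rangle$, and $R_3=-S_1\langle\bar\phi^{n+\frac12}-\hat\phi^{n+\frac12},\nabla_\tau\phi^{n+1}\rangle$.

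To control the remainder I would combine three tools: (i) the MBP of Theorem~\ref{thm:MBP_sCN}, which provides $\|\nabla_\tau\phi^{n+1}\|_\infty\le2$, $\|F'''(\bar\phi^{n+\frac12})\|_\infty\le6$, and Lipschitz control of $f$; (ii) the scheme identity above, which implies $\|\nabla_\tau\phi^{n+1}\|^2\le K_\mm\tau_{n+1}^2(1+S_2\tau_{n+1}^2)^{-2}\langle\mm(\hat\phi^{n+\frac12})\mu^{n+\frac12},\mu^{n+\frac12}\rangle$; and (iii) the sharp estimate \eqref{err:phi_phihat2}, namely $\|\bar\phi^{n+\frac12}-\hat\phi^{n+\frac12}\|\le C(\tau^2+h^2)$, obtained in Remark~\ref{rem:bar_hat} from the maximum-norm convergence in Theorem~\ref{thm:Conver_sCN}. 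The cubic term $R_1$ is bounded via the discrete inequality $\|v\|_{L^3}^3\le\|v\|_\infty\|v\|^2$ together with (i)--(ii). For $R_2$ and $R_3$ I would apply Young's inequality to split each product into a piece proportional to $\|\nabla_\tau\phi^{n+1}\|^2$ (again controlled via (ii)) plus a piece proportional to $\|\bar\phi^{n+\frac12}-\hat\phi^{n+\frac12}\|^2$, which by (iii) contributes $C\tau_{n+1}(\tau^4+h^4)$ once the Young weights are tuned so that the $\|\nabla_\tau\phi^{n+1}\|^2$ piece is absorbable.

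The final step is absorption: the hypothesis $\tau_{n+1}\le\min\{1,1/(4K_\mm(1+S_2))\}$ is exactly what is required so that the cumulative coefficient produced on the right in front of $\langle\mm(\hat\phi^{n+\frac12})\mu^{n+\frac12},\mu^{n+\frac12}\rangle$ stays below, say, half of the dissipation coefficient $\tau_{n+1}(1+S_2\tau_{n+1}^2)^{-1}$ on the left. Moving it across and discarding the remaining nonnegative dissipation yields $E_h[\phi^{n+1}]-E_h[\phi^n]\le C\tau_{n+1}(\tau^4+h^4)$, i.e. the second clause of \eqref{EDL:sch2_1}; summing from $k=1$ to $n$ and invoking the already-established $n=0$ case produces \eqref{EDL:sch2_2}. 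I expect the main obstacle to be point (ii): each remainder contains $\|\nabla_\tau\phi^{n+1}\|$ in isolation, and bounding it naively by either the MBP constant $2$ or by $\tau_{n+1}\|\mm\mu^{n+\frac12}\|$ would cost a full power of $\tau$ and destroy the $\mathcal{O}(\tau_{n+1}(\tau^4+h^4))$ rate. The correct strategy is to consistently convert $\|\nabla_\tau\phi^{n+1}\|^2$ into a fraction of the dissipation functional through the scheme identity, so that only the clean $(\tau^2+h^2)^2$ contribution from Remark~\ref{rem:bar_hat} survives on the right.
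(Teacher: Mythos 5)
Your proposal is correct and follows essentially the same route as the paper: test with $\tau_{n+1}\mu^{n+\frac12}$ and $\nabla_\tau\phi^{n+1}$, use the midpoint Taylor structure of the quartic $F$ to telescope the potential energy (the paper packages your exact identity plus the cubic-remainder bound into a single pointwise inequality with a $2(a-b)^2$ term), convert every stray $\|\nabla_\tau\phi^{n+1}\|^2$ into the dissipation functional via $\nabla_\tau\phi^{n+1}=-\tau_{n+1}\mm(\hat\phi^{n+\frac12})\mu^{n+\frac12}/(1+S_2\tau_{n+1}^2)$, invoke the $O(\tau^2+h^2)$ bound on $\bar\phi^{n+\frac12}-\hat\phi^{n+\frac12}$ from Remark \ref{rem:bar_hat}, and absorb under the stated stepsize restriction. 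Your flagged obstacle and its resolution (tuning the Young weights with a factor $\tau_{n+1}$ so the error piece inherits a $\tau_{n+1}$ prefactor) is exactly what the paper's direct substitute-then-Cauchy--Schwarz computation accomplishes.
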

\begin{proof} Note that \eqref{EDL:sch2_2} is a direct consequent of \eqref{EDL:sch2_1} by summing over $ n $. Therefore, we only pay attention to the proof of \eqref{EDL:sch2_1}.
	
	First, Theorem \ref{thm:energy_1} directly implies that $ E_h[\phi^{1}] \leq E_h[\phi^{0}] $. Next, for $ n \geq 1 $, we take the inner product of \eqref{sch:2_n_2}--\eqref{sch:2_n_3} with $\tau_{n+1} \mu^{n+\f{1}{2}} $ and  $\nabla_{\tau}\phi^{n+1}=\phi^{n+1}-\phi^{n}$, respectively, to get
	\begin{equation}\label{energy:0}
		\begin{aligned}
			&- \varepsilon^2 \big\langle \Delta_{h} \bar{\phi}^{n+\f{1}{2}}, \nabla_{\tau} \phi^{n+1} \big\rangle - \big\langle f( \hat{\phi}^{n+\f{1}{2}} ) , \nabla_{\tau} \phi^{n+1} \big\rangle \\
			& \qquad + S_{1} \big\langle \bar{\phi}^{n+\f{1}{2}} - \hat{\phi}^{n+\f{1}{2}},  \nabla_{\tau} \phi^{n+1} \big\rangle \\
			& = \langle \nabla_{\tau} \phi^{n+1}, \mu^{n+\f{1}{2}} \rangle = - \f{ \tau_{n+1} }{ 1 + S_{2} \tau_{n+1}^{2} } \big\langle \mm(\hat{\phi}^{n+\f{1}{2}}) \mu^{n+\f{1}{2}}, \mu^{n+\f{1}{2}} \big\rangle,
		\end{aligned}
	\end{equation}
	which implies
	\begin{equation}\label{energy:1}
		\begin{aligned}
			& - \f{\varepsilon^2}{2} \big\langle \Delta_{h} \phi^{n+1}, \phi^{n+1} \big\rangle + \f{\varepsilon^2}{2} \big\langle \Delta_{h} \phi^{n}, \phi^{n} \big\rangle - \big\langle f( \bar{\phi}^{n+\f{1}{2}} ), \nabla_{\tau} \phi^{n+1} \big\rangle \\
			& =  \big\langle f( \hat{\phi}^{n+\f{1}{2}} ) - f( \bar{\phi}^{n+\f{1}{2}} ), \nabla_{\tau} \phi^{n+1} \big\rangle - S_{1} \big\langle \bar{\phi}^{n+\f{1}{2}} - \hat{\phi}^{n+\f{1}{2}},  \nabla_{\tau} \phi^{n+1} \big\rangle \\
			& \qquad - \f{ \tau_{n+1} }{ 1 + S_{2} \tau_{n+1}^{2} } \big\langle \mm(\hat{\phi}^{n+\f{1}{2}}) \mu^{n+\f{1}{2}}, \mu^{n+\f{1}{2}} \big\rangle.
		\end{aligned}
	\end{equation}
	
	For any $ a , b \in [-1,1] $, it is easy to check
	\begin{equation*}\label{energy:l3_1}
		\f{1}{4} \left[ ( a^2 - 1 )^2 - ( b^2 - 1 )^2 \right] \leq \left[ \left( \f{a+b}{2} \right)^3 - \left( \f{a+b}{2} \right) \right] ( a - b ) + 2 ( a - b )^2,
	\end{equation*}
	which gives us
	\begin{equation}\label{energy:l3_2}
		\begin{aligned}
			- \big\langle f(\bar{\phi}^{n+\f{1}{2}}), \nabla_{\tau} \phi^{n+1} \big\rangle \geq \langle F( \phi^{n+1} ) - F( \phi^{n} ), 1 \rangle - 2 \| \nabla_{\tau} \phi^{n+1} \|^2.
		\end{aligned}
	\end{equation}
	Moreover, it follows from \eqref{sch:2_n_2} that $ \nabla_{\tau} \phi^{n+1}= - \f{ \tau_{n+1} }{ 1 + S_{2} \tau_{n+1}^{2} }  \mm(\hat{\phi}^{n+\f{1}{2}}) \mu^{n+\f{1}{2}}$. Thus,
	\begin{equation}
		\begin{aligned}
			\| \nabla_{\tau} \phi^{n+1} \|^{2}
			\le\tau_{n+1}^{2} \| \mm(\hat{\phi}^{n+\f{1}{2}}) \mu^{n+\f{1}{2}}\|^2
			\leq K_{\mm} \tau_{n+1}^{2} \big\langle \mm(\hat{\phi}^{n+\f{1}{2}}) \mu^{n+\f{1}{2}}, \mu^{n+\f{1}{2}} \big\rangle,
		\end{aligned}
	\end{equation}
	and the first two terms on the right-hand side of \eqref{energy:1} can be estimated by
	\begin{equation}\label{energy:r1_1}
		\begin{aligned}
			& \big\langle f( \hat{\phi}^{n+\f{1}{2}} ) - f( \bar{\phi}^{n+\f{1}{2}} ), \nabla_{\tau} \phi^{n+1} \big\rangle - S_{1} \big\langle \bar{\phi}^{n+\f{1}{2}} - \hat{\phi}^{n+\f{1}{2}},  \nabla_{\tau} \phi^{n+1} \big\rangle \\
			& = - \f{ \tau_{n+1} }{ 1 + S_{2} \tau_{n+1}^{2} } \big\langle f( \hat{\phi}^{n+\f{1}{2}} ) - f( \bar{\phi}^{n+\f{1}{2}} ), \mm( \hat{\phi}^{n+\f{1}{2}} ) \mu^{n+\f{1}{2}} \big\rangle
			\\
			& \quad + \f{ S_{1} \tau_{n+1} }{ 1 + S_{2} \tau_{n+1}^{2} } \big\langle \bar{\phi}^{n+\f{1}{2}} - \hat{\phi}^{n+\f{1}{2}},  \mm( \hat{\phi}^{n+\f{1}{2}} ) \mu^{n+\f{1}{2}} \big\rangle\\
			& \leq \f{ 2 S_{1} \tau_{n+1} }{ 1 + S_{2} \tau_{n+1}^{2} } \| M^{\f{1}{2}}( \hat{\phi}^{n+\f{1}{2}} ) ( \bar{\phi}^{n+\f{1}{2}} - \hat{\phi}^{n+\f{1}{2}} ) \| \| M^{\f{1}{2}}( \hat{\phi}^{n+\f{1}{2}} ) \mu^{n+\f{1}{2}} \| \\
			& \leq C \tau_{n+1} \| \bar{\phi}^{n+\f{1}{2}} - \hat{\phi}^{n+\f{1}{2}} \|^{2} + \f{\tau_{n+1}}{ 2 ( 1 + S_{2} \tau_{n+1}^{2} ) } \big\langle \mm(\hat{\phi}^{n+\f{1}{2}}) \mu^{n+\f{1}{2}}, \mu^{n+\f{1}{2}} \big\rangle,
		\end{aligned}
	\end{equation}
	where the mean value theorem and Cauchy-Schwarz inequality have been used in the estimate
	\begin{align*}
			&\big\langle f( \hat{\phi}^{n+\f{1}{2}} ) - f( \bar{\phi}^{n+\f{1}{2}} ), \mm( \hat{\phi}^{n+\f{1}{2}} ) \mu^{n+\f{1}{2}} \big\rangle\\
			&\le S_{1}  \| M^{\f{1}{2}}( \hat{\phi}^{n+\f{1}{2}} ) ( \bar{\phi}^{n+\f{1}{2}} - \hat{\phi}^{n+\f{1}{2}} ) \| \| M^{\f{1}{2}}( \hat{\phi}^{n+\f{1}{2}} ) \mu^{n+\f{1}{2}} \|
	\end{align*}
	for $ S_{1} \geq\left\|f^{\prime}\right\|_{C[-1, 1]} $, $ \| \bar{\phi}^{n+\f{1}{2}} \|_{\infty} \leq 1 $ and $ \| \hat{\phi}^{n+\f{1}{2}} \|_{\infty} \leq 1$.
	Then, inserting these estimates \eqref{err:phi_phihat2} and \eqref{energy:l3_2}--\eqref{energy:r1_1} into \eqref{energy:1}, gives
	\begin{equation*}
		\begin{aligned}
			&	E_{h}[ \phi^{n+1} ] - E_{h}[ \phi^{n} ] \\
			& \leq C \tau_{n+1} ( \tau^{4} + h^{4} )
			+ \Big(2K_{\mm} \tau_{n+1}- \f{1}{ 2 ( 1 + S_{2} \tau_{n+1}^{2} ) }\Big) \tau_{n+1} \big\langle \mm(\hat{\phi}^{n+\f{1}{2}}) \mu^{n+\f{1}{2}}, \mu^{n+\f{1}{2}} \big\rangle,
		\end{aligned}
	\end{equation*}
	and thus, if $2K_{\mm} \tau_{n+1} \le \f{1}{ 2 ( 1 + S_{2} \tau_{n+1}^{2} ) }$, which requires that $ \tau_{n+1} \le \min\left\{ 1, \f{1}{4 K_{\mm} ( 1 + S_{2} ) } \right\}$, we can derive the conclusion
	\begin{equation*}
		E_{h}[ \phi^{n+1} ] - E_{h}[ \phi^{n} ] 
		\leq  C \tau_{n+1} ( \tau^{4} + h^{4} ) \quad \text{for} \  n \geq 1,
	\end{equation*}
	which proves the theorem.
\end{proof}

\begin{remark}
	It is worth emphasizing that the equality \eqref{energy:0} plays a pivotal role in the mobility-robust energy stability analysis. This consideration motivates our adoption of the Crank–Nicolson method for constructing the second-order scheme. In contrast, for other time discretization approaches, such as the BDF2 method \cite{MOC_2023_Ju,ANM_2025_Hou,SINUM_2020_Liao,JSC_2025_Zhang}, it is nontrivial to determine whether this equality continues to hold.
\end{remark}

\section{Numerical experiments}\label{sec:test}
In this section, we present several numerical experiments for the Allen--Cahn equation \eqref{Model:MAC1}--\eqref{Model:MAC2} with general mobility to validate the theoretical results of the proposed DsBE scheme \eqref{sch:sBE_1}--\eqref{sch:sBE_2} and DsCN scheme \eqref{sch:2_1_1}--\eqref{sch:2_n_3}, including error accuracy, energy stability, and preservation of the MBP. Throughout all numerical simulations, 
the stabilization parameter $ S_{1}$ is consistently set to $ S_{1} = 2 $.

\subsection{Temporal convergence}\label{subsec:temporal}
In this example, we test the time accuracy of the proposed schemes by considering the exterior-forced Allen--Cahn model, given by $ \phi_{t} = - \mm(\phi) \mu + g( \mbf{x}, t ) $ for $(\mbf{x},t) \in  ( 0, 2 \pi )^{2} \times (0,1] $. The interface width is chosen as $ \varepsilon = 0.01 $, and two different mobility functions are examined: one is the constant mobility $ \mm(\phi) \equiv 1 $, and the other is the degenerate mobility $ \mm(\phi) = 1 - \phi^{2} $. The linear part $ g( \mbf{x}, t ) $ is chosen such that the exact solution is $ \phi( \mbf{x}, t ) = e^{-t} \sin(x) \sin(y) $.

\begin{table}[!t]
	\caption{\small Time accuracy of DsBE and DsCN schemes with uniform and random time
		steps: $\mm(\phi) \equiv 1$.}\label{Ex1:tab1}
	\vspace{-5pt}
	\resizebox{0.98\textwidth}{!}{{\footnotesize
			\begin{tabular}{ccccccccc}	
				\toprule
				\multicolumn{1}{c}{\multirow{2}{*}{numerical}} & \multicolumn{4}{c}{DsBE scheme} & \multicolumn{4}{c}{DsCN scheme} \\
				\cmidrule(lr){2-5}
				\cmidrule(lr){6-9}
				\multicolumn{1}{c}{\multirow{1}{*}{method}} & \multicolumn{2}{c}{uniform steps} & \multicolumn{2}{c}{random steps} & \multicolumn{2}{c}{uniform steps} & \multicolumn{2}{c}{random steps} \\
				\cmidrule(lr){2-3}
				\cmidrule(lr){4-5}
				\cmidrule(lr){6-7}
				\cmidrule(lr){8-9}
				$ N $ & $ \| e^{N} \|_{\infty} $ & order & $\| e^{N} \|_{\infty}  $ & order & $ \| e^{N} \|_{\infty} $ & order & $ \| e^{N} \|_{\infty} $ & order \\
				\midrule
				20  & $7.02 \times 10^{-2}$ & --- & $7.14 \times 10^{-2}$ & --- & $5.83 \times 10^{-3}$ & --- & $5.43 \times 10^{-3}$ & --- \\
				40  & $3.81 \times 10^{-2}$ & 0.88 & $3.93 \times 10^{-2}$ & 0.86 & $1.52 \times 10^{-3}$ & 1.94 & $2.11 \times 10^{-3}$ & 1.37 \\
				80  & $1.99 \times 10^{-2}$ & 0.94 & $2.04 \times 10^{-2}$ & 0.94 & $3.89 \times 10^{-4}$ & 1.97 & $5.20 \times 10^{-4}$ & 2.02 \\
				160 & $1.02 \times 10^{-2}$ & 0.96 & $1.06 \times 10^{-2}$ & 0.95 & $9.82 \times 10^{-5}$ & 1.99 & $1.37 \times 10^{-4}$ & 1.92 \\
				320 & $5.10 \times 10^{-3}$ & 1.00 & $5.29 \times 10^{-3}$ & 1.00 & $2.47 \times 10^{-5}$ & 1.99 & $3.27 \times 10^{-5}$ & 2.07 \\
				\bottomrule
	\end{tabular}}}
\end{table}
\begin{table}[!t]
	\caption{\small Time accuracy of DsBE and DsCN schemes with uniform and random time
		steps: $\mm(\phi) = 1 - \phi^{2}.$}\label{Ex1:tab2}%
	\vspace{-5pt}
	\resizebox{0.98\textwidth}{!}{{\footnotesize
			\begin{tabular}{ccccccccc}
				\toprule
				\multicolumn{1}{c}{\multirow{2}{*}{numerical}} & \multicolumn{4}{c}{DsBE scheme} & \multicolumn{4}{c}{DsCN scheme} \\
				\cmidrule{2-5}
				\cmidrule{6-9}
				\multicolumn{1}{c}{\multirow{1}{*}{method}} & \multicolumn{2}{c}{uniform steps} & \multicolumn{2}{c}{random steps} & \multicolumn{2}{c}{uniform steps} & \multicolumn{2}{c}{random steps} \\
				\cmidrule{2-3}
				\cmidrule{4-5}
				\cmidrule{6-7}
				\cmidrule{8-9}
				$ N $  & $ \| e^{N} \|_{\infty} $ & order & $\| e^{N} \|_{\infty}  $ & order & $ \| e^{N} \|_{\infty} $ & order & $ \| e^{N} \|_{\infty} $ & order \\
				\midrule
				20     & $4.79 \times 10^{-2}$   &  ---    &  $4.87 \times 10^{-2}$ & ---    & $3.98 \times 10^{-3}$   &  ---   & $3.72 \times 10^{-3}$   &  --- \\
				40     & $2.60 \times 10^{-2}$   &  0.88   &  $2.68 \times 10^{-2}$ & 0.86   & $1.04 \times 10^{-3}$   &  1.94  & $1.44 \times 10^{-3}$   &  1.37 \\
				80     & $1.35 \times 10^{-2}$   &  0.95   &  $1.39 \times 10^{-2}$ & 0.95   & $2.65 \times 10^{-4}$   &  1.97  & $3.55 \times 10^{-4}$   &  2.02 \\
				160    & $6.90 \times 10^{-3}$   &  0.97   &  $7.20 \times 10^{-3}$ & 0.95   & $6.70 \times 10^{-5}$   &  1.99  & $9.35 \times 10^{-5}$   &  1.92 \\
				320    & $3.50 \times 10^{-3}$   &  0.98   &  $3.60 \times 10^{-3}$ & 1.00   & $1.68 \times 10^{-5}$   &  1.99  & $2.23 \times 10^{-5}$   &  2.07 \\
				\bottomrule
	\end{tabular}}}
\end{table}
In the simulations, the spatial domain is uniformly partitioned into $ M = 400 $ elements along each spatial direction. The stabilization parameter for the DsCN scheme is set to $ S_{2} = \big( S_{1}/4 + \varepsilon^{2}/h^2 \big)^{2} \approx 0.8195$, ensuring the theoretical requirements of \eqref{MBP:S2_sCN} for both mobility functions. Given the number of temporal partition $ N $, two different time meshes are tested: uniform one with equal stepsize $\tau = T/N$; and random one with stepsizes generated by $30\%$ random perturbation of the uniform stepsize. For the given mobility functions, the numerical results are presented in Tables \ref{Ex1:tab1}--\ref{Ex1:tab2}. As observed, both schemes achieve their desired first- and second-order temporal convergence, respectively, whether with uniform or nonuniform time stepsizes.

\subsection{MBP preservation and energy stability}\label{Ex4_2}
In this subsection, we perform numerical simulations of the coarsening dynamics governed by the Allen–Cahn equation \eqref{Model:MAC1}–\eqref{Model:MAC2} using the same mobility functions as in subsection \ref{subsec:temporal}, with the aim of numerically validating the discrete MBP and energy stability of the proposed numerical schemes. The interface width parameter is set to $ \varepsilon = 0.01 $, and the computational domain is $ \Omega = (0,1)^{2} $. We discretize the spatial domain using $ M = 128 $ spatial grid points and initialize the phase field with uniformly distributed random numbers from $ -0.8 $ and $ 0.8 $, thereby generating highly oscillatory initial data.

\begin{figure}[!t]
	\vspace{-12pt}
	\centering
	\subfigure[$ \mm(\phi) \equiv 1 $]
	{
		\includegraphics[width=0.4\textwidth]{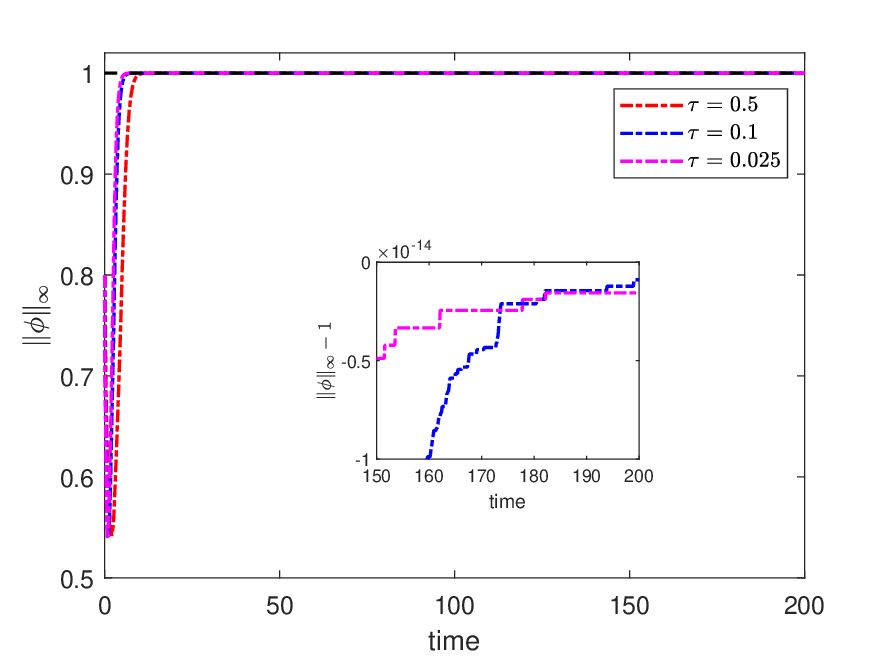}	
	}%
	\subfigure[$ \mm(\phi) = 1-\phi^{2} $]
	{
		\includegraphics[width=0.4\textwidth]{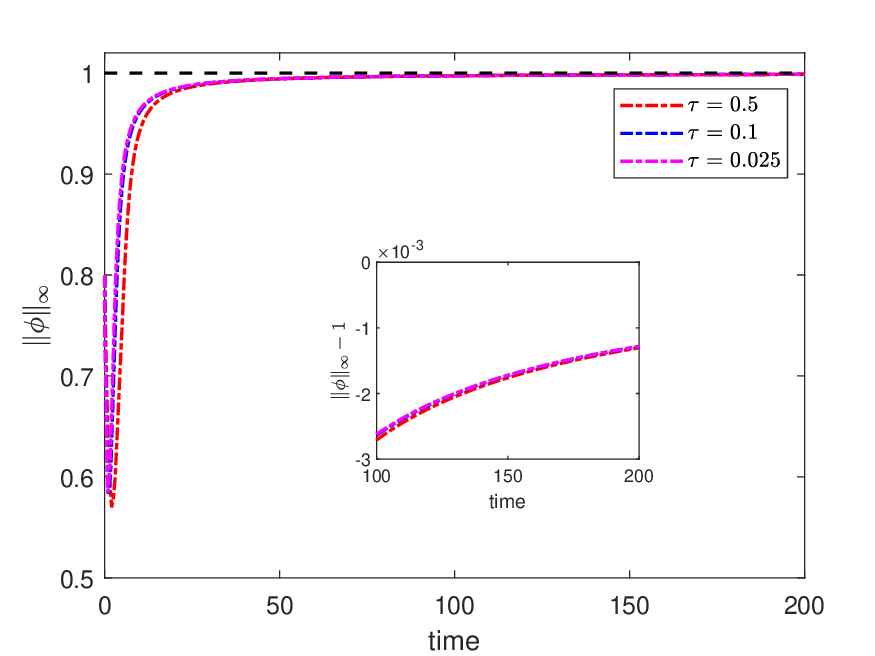}
	}\\ \vspace{-0.4cm}
	\subfigure[$ \mm(\phi) \equiv 1 $]
	{
		\includegraphics[width=0.4\textwidth]{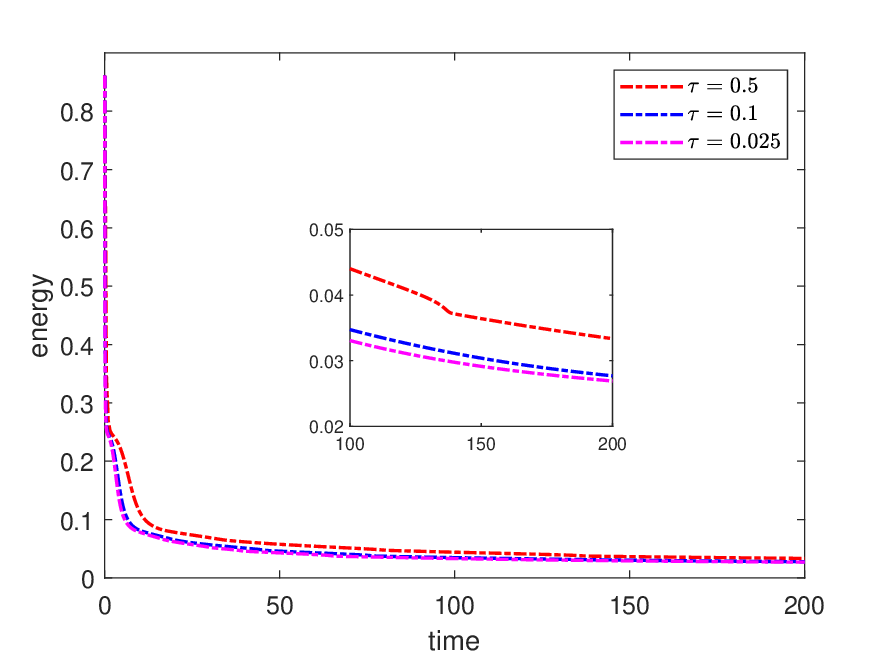}
	}%
	\subfigure[$ \mm(\phi) = 1-\phi^{2} $]
	{
		\includegraphics[width=0.4\textwidth]{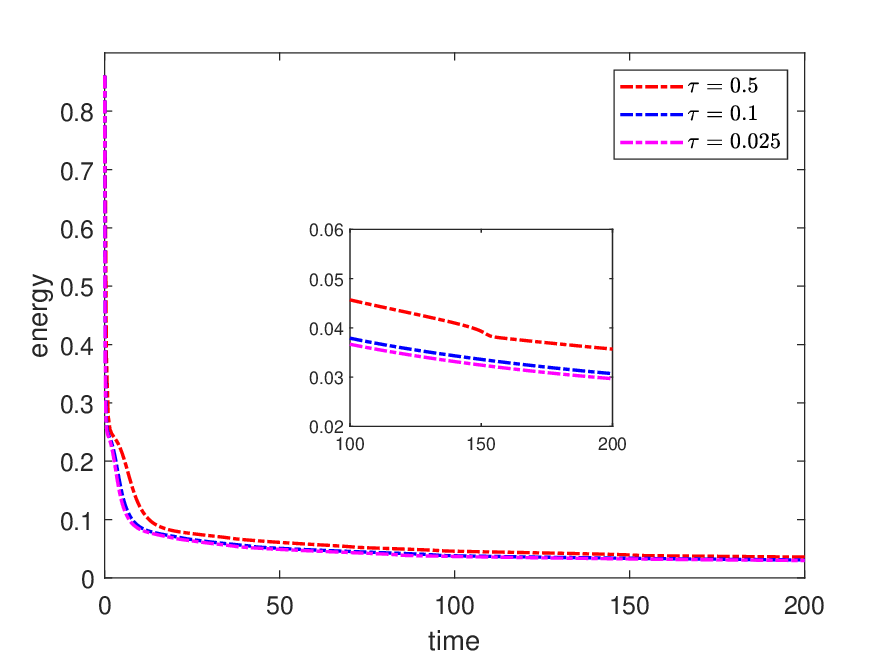}
	}%
	\setlength{\abovecaptionskip}{0.0cm}
	\setlength{\belowcaptionskip}{0.0cm}
	\caption{\small The maximum norm (top) and energy (bottom) of simulated solutions computed by the DsBE scheme for different mobilities.}
\label{figEx2_1}
\end{figure}
Figure \ref{figEx2_1} shows the time evolutions of the maximum norm and total energy computed by the DsBE scheme \eqref{sch:sBE_1}--\eqref{sch:sBE_2} with different time stepsizes $ \tau = 0.5, 0.1, 0.025 $. The results confirm that the scheme preserves both the discrete MBP and energy stability, even for relatively larger time stepsize, thereby demonstrating the unconditional structure-preserving properties established in Theorems \ref{thm:MBP_sBE}--\ref{thm:energy_1}. For the second-order DsCN scheme, the corresponding numerical results obtained with the stabilization parameter $ S_{2} = \big( S_{1}/4 + \varepsilon^{2}/h^2 \big)^{2}  \approx 4.5728 $ are depicted in Figure \ref{figEx2_3}. These results likewise confirm the structure-preserving behavior of the DsCN scheme.

\begin{figure}[!t]
\vspace{-12pt}
\centering
\subfigure[$ \mm(\phi) \equiv 1 $]
{
\includegraphics[width=0.4\textwidth]{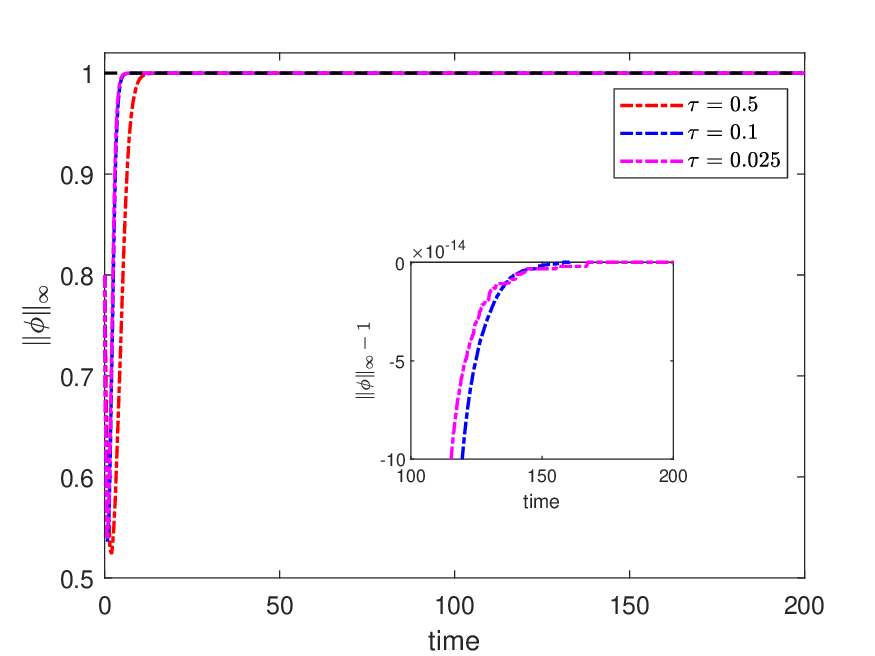}\label{figEx2_3a}
}%
\subfigure[$ \mm(\phi) = 1-\phi^{2} $]
{
\includegraphics[width=0.4\textwidth]{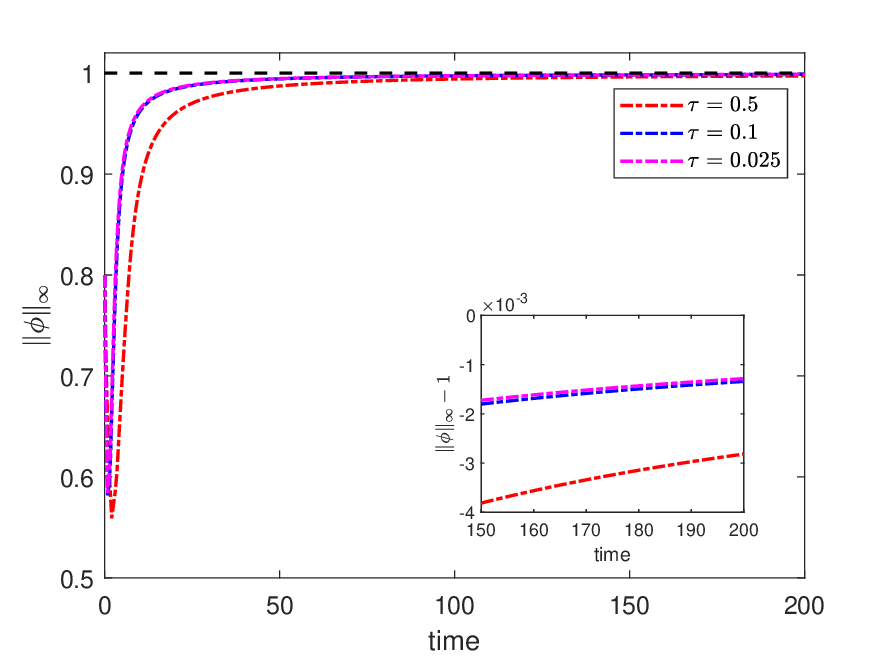}\label{figEx2_3b}
}%
\vspace{-0.4cm}
\subfigure[$ \mm(\phi) \equiv 1 $]
{
\includegraphics[width=0.4\textwidth]{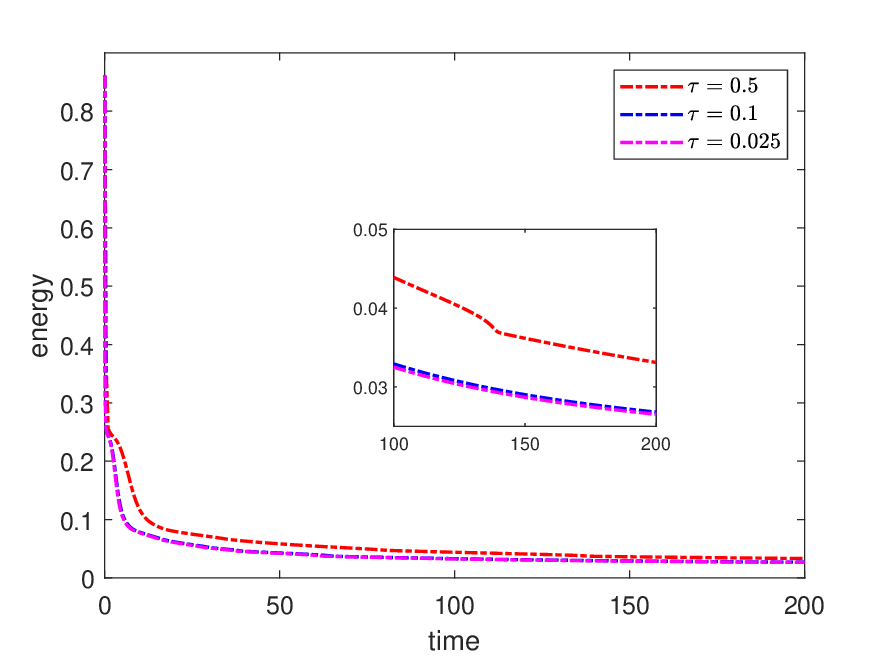}	\label{figEx2_3c}
}%
\subfigure[$ \mm(\phi) = 1-\phi^{2} $]
{
\includegraphics[width=0.4\textwidth]{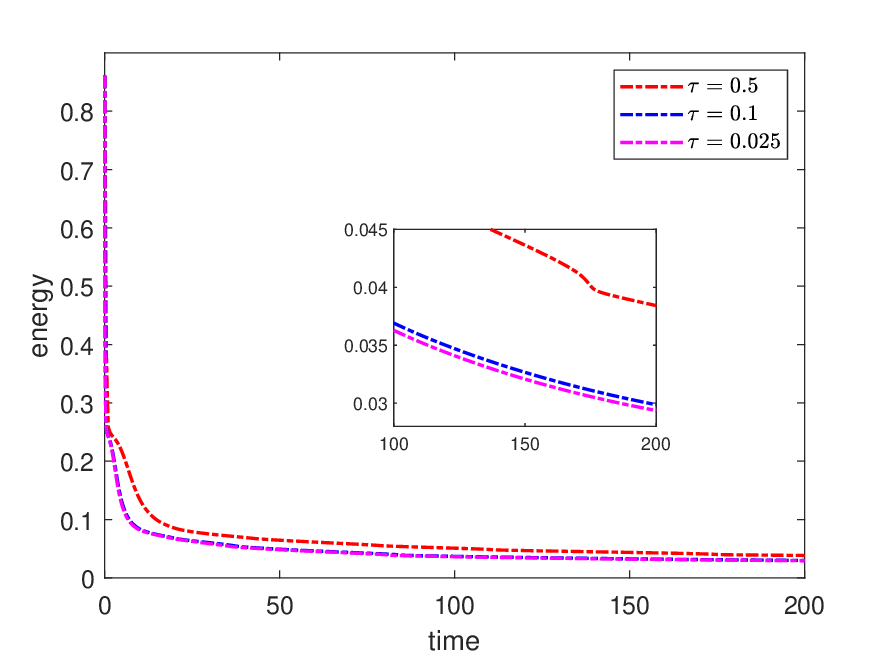}
\label{figEx2_3d}
}%
\setlength{\abovecaptionskip}{0.0cm}
\setlength{\belowcaptionskip}{0.0cm}
\caption{\small The maximum norm (top) and energy (bottom) of simulated solutions computed by the DsCN scheme with $ S_{2} = \left( S_{1}/4 + \varepsilon^{2}/h^2 \right)^{2} $ for different mobilities.}
\label{figEx2_3}
\end{figure}

\subsection{Application of adaptive time-stepping strategy}\label{sub:adaptive}
\begin{figure}[!t]
\centering
\subfigure[$t=20$]
{
\begin{minipage}[t]{0.18\linewidth}
\centering
\includegraphics[width=1.05in]{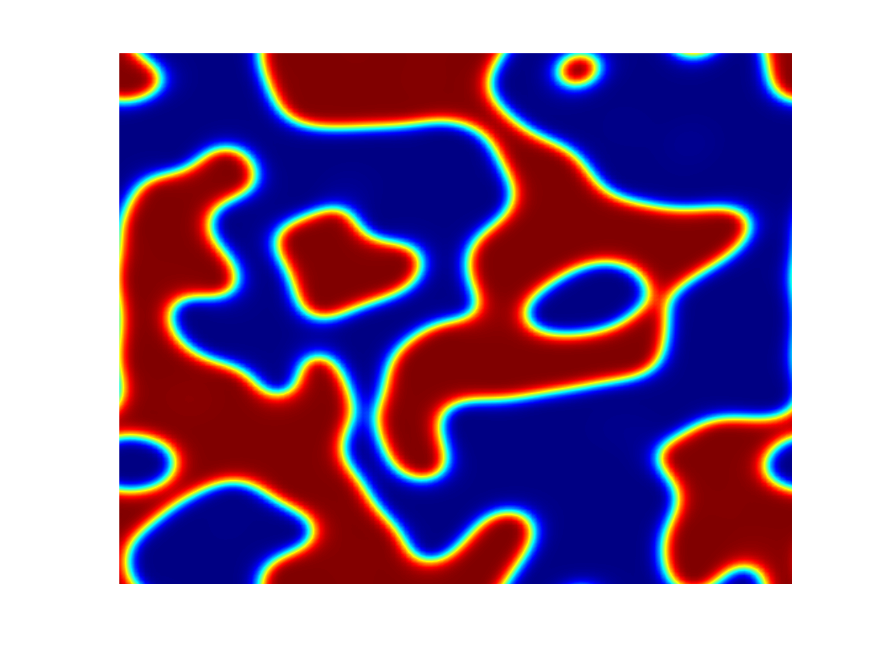}
\includegraphics[width=1.05in]{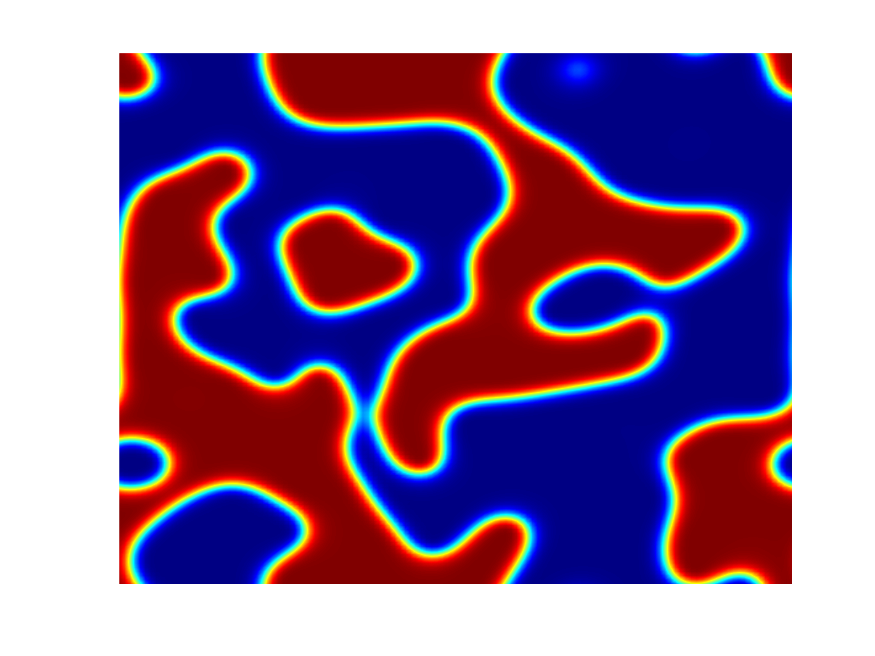}
\includegraphics[width=1.05in]{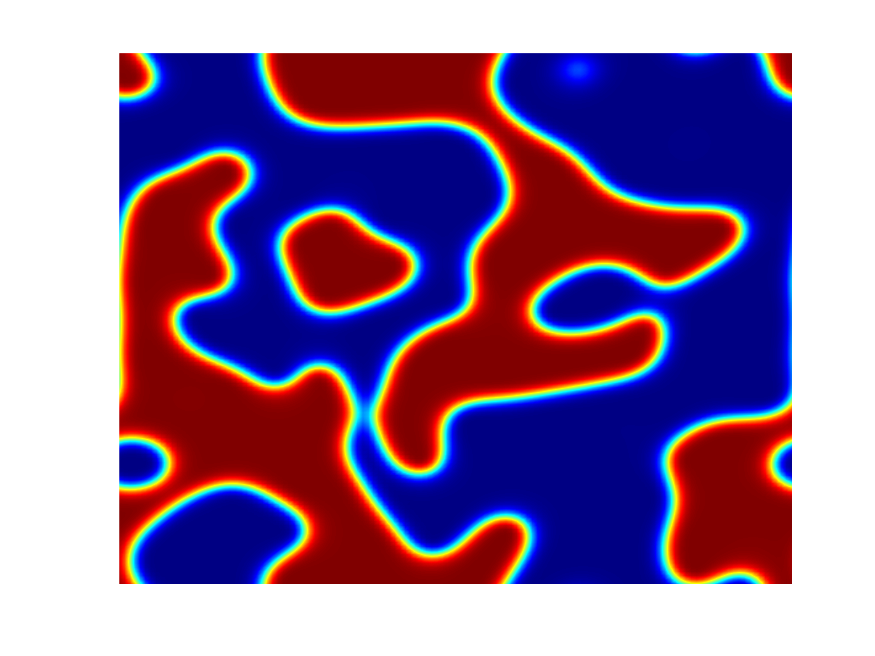}
\end{minipage}%
}%
\subfigure[$t=80$]
{
\begin{minipage}[t]{0.18\linewidth}
\centering
\includegraphics[width=1.05in]{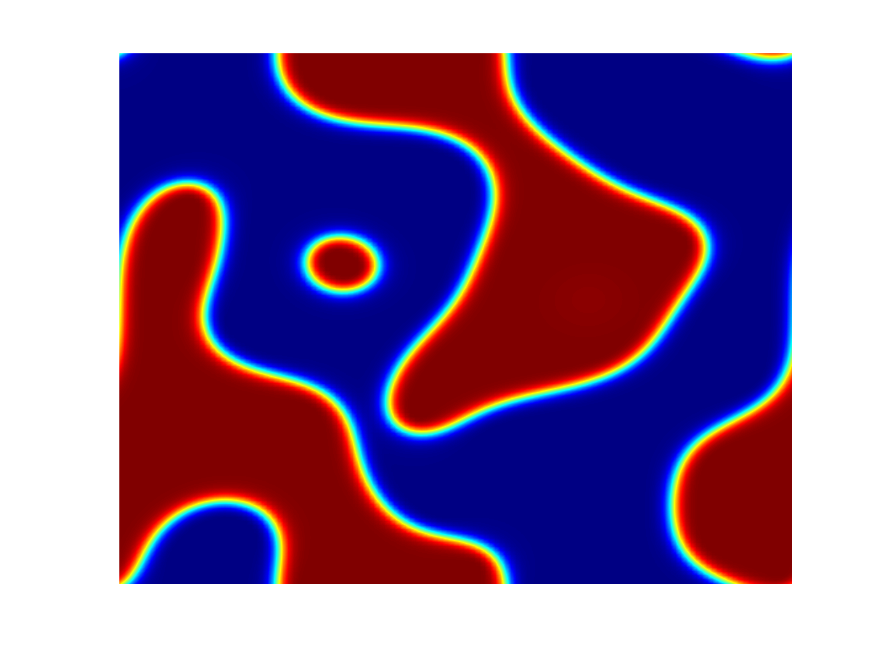}
\includegraphics[width=1.05in]{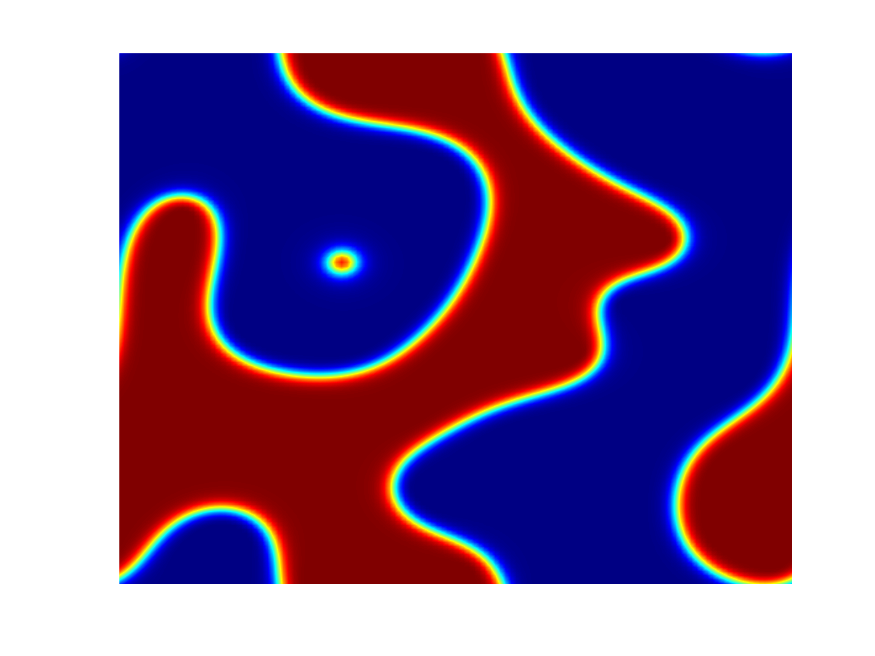}
\includegraphics[width=1.05in]{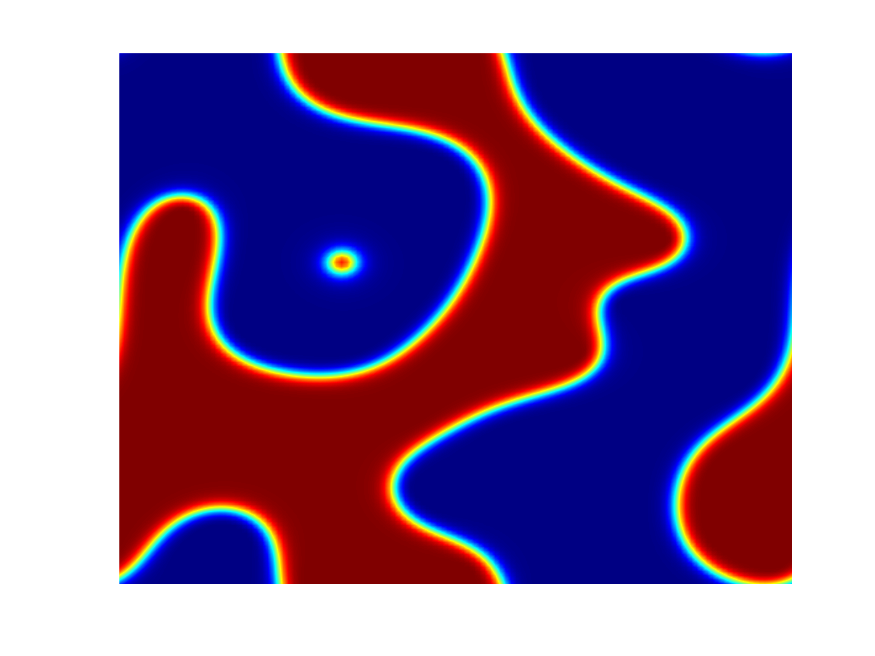}
\end{minipage}%
}%
\subfigure[$t=200$]
{
\begin{minipage}[t]{0.18\linewidth}
\centering
\includegraphics[width=1.05in]{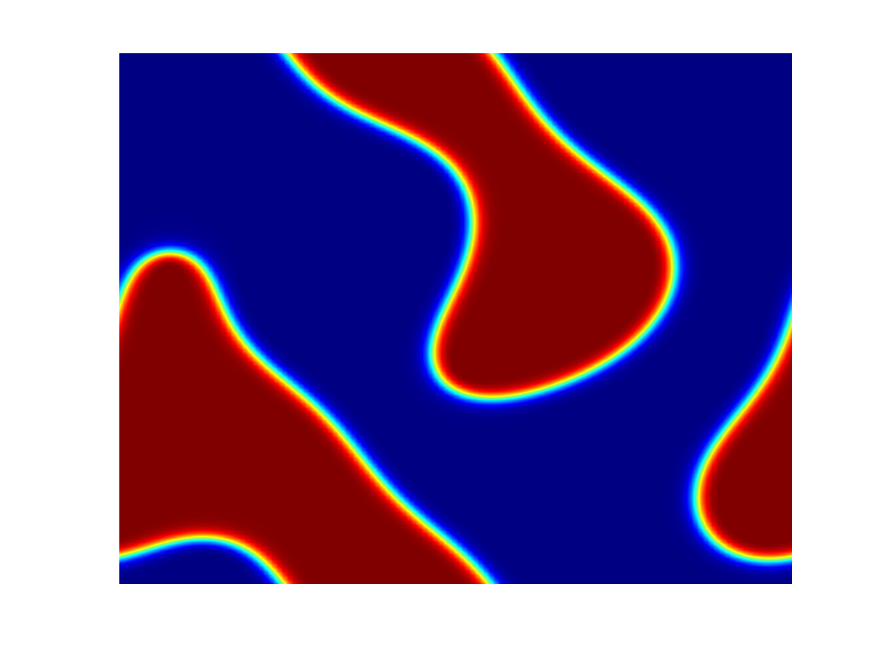}
\includegraphics[width=1.05in]{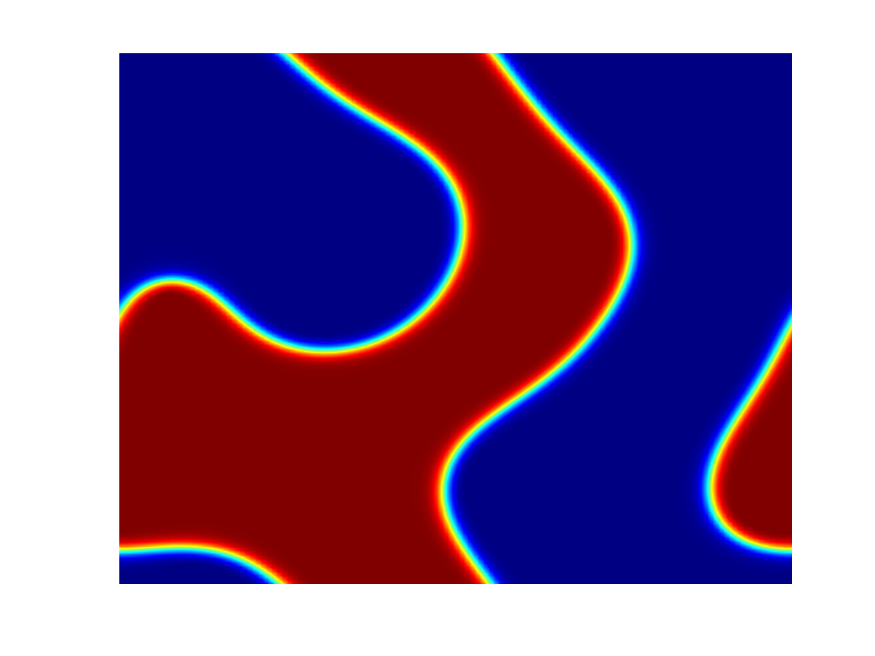}
\includegraphics[width=1.05in]{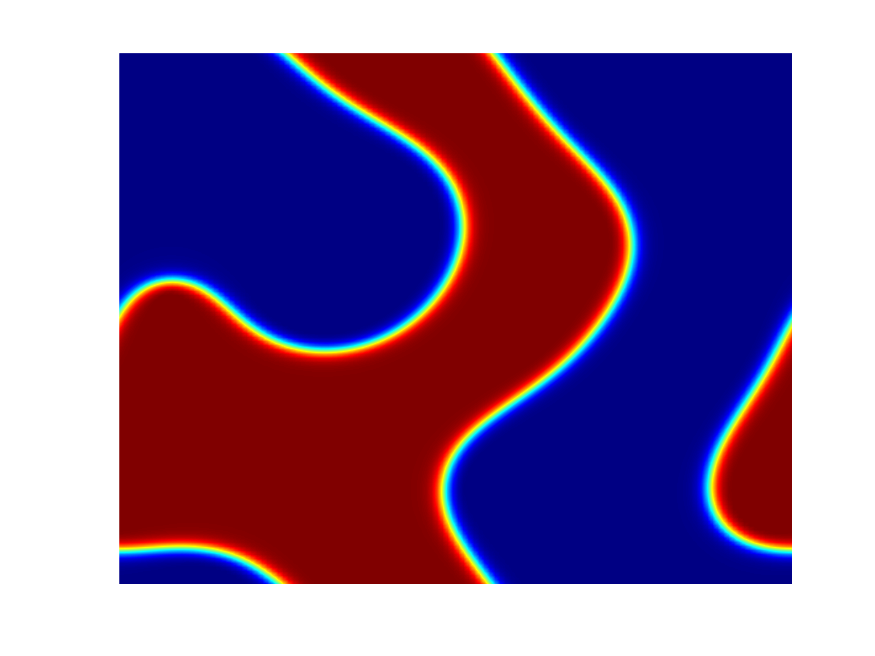}
\end{minipage}%
}%
\subfigure[$t=500$]
{
\begin{minipage}[t]{0.18\linewidth}
\centering
\includegraphics[width=1.05in]{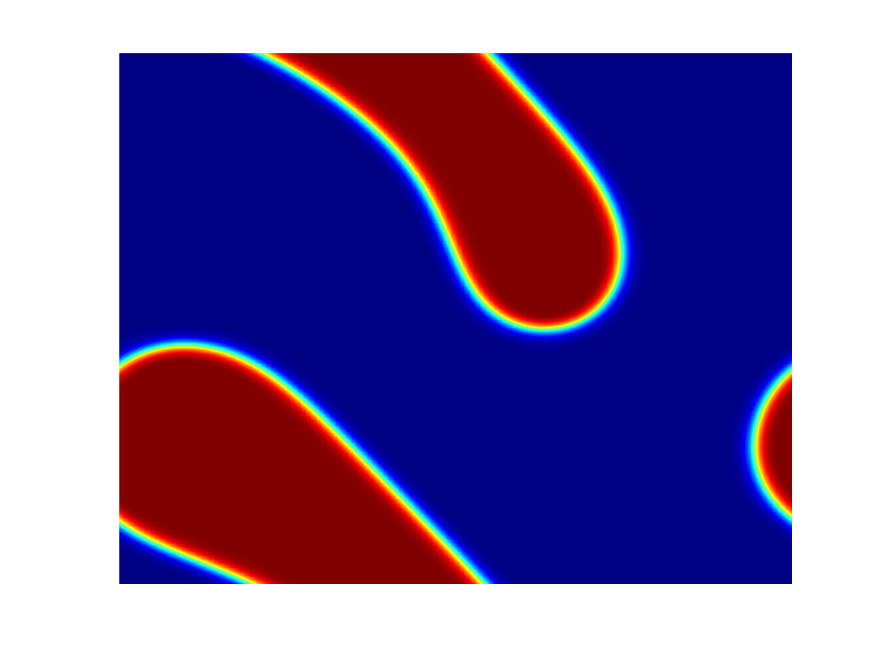}
\includegraphics[width=1.05in]{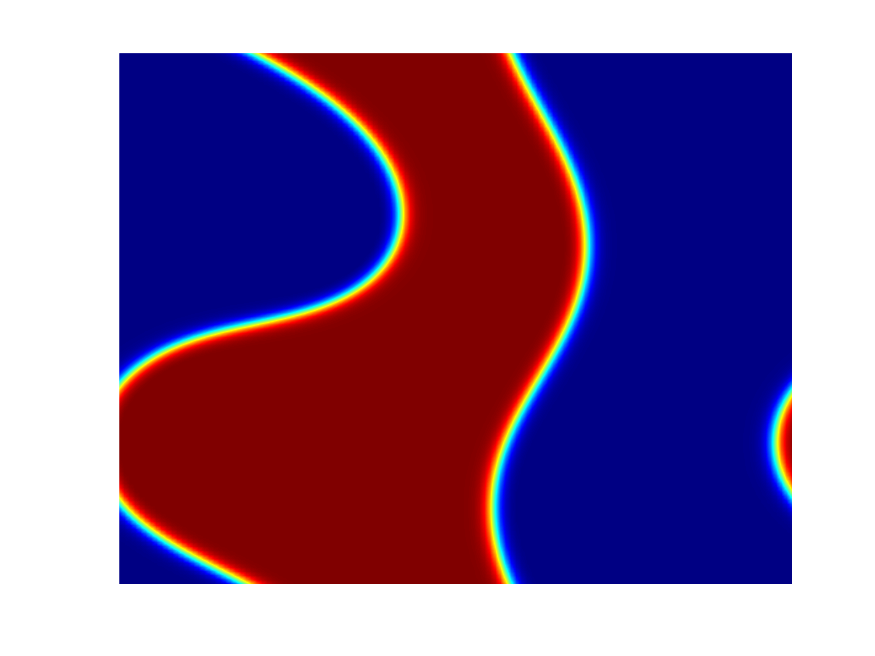}
\includegraphics[width=1.05in]{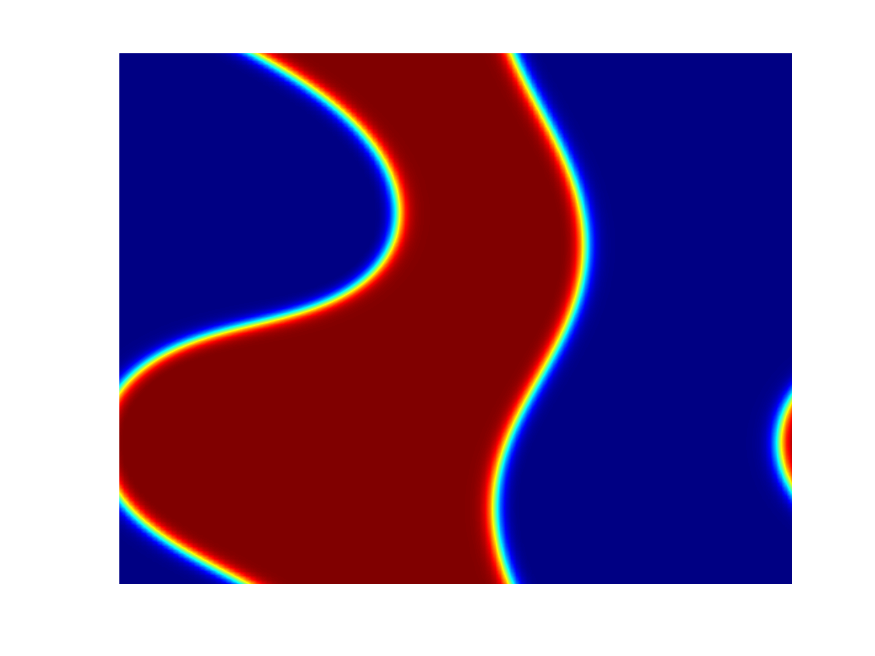}
\end{minipage}%
}%
\subfigure[$t=1000$]
{
\begin{minipage}[t]{0.18\linewidth}
\centering
\includegraphics[width=1.05in]{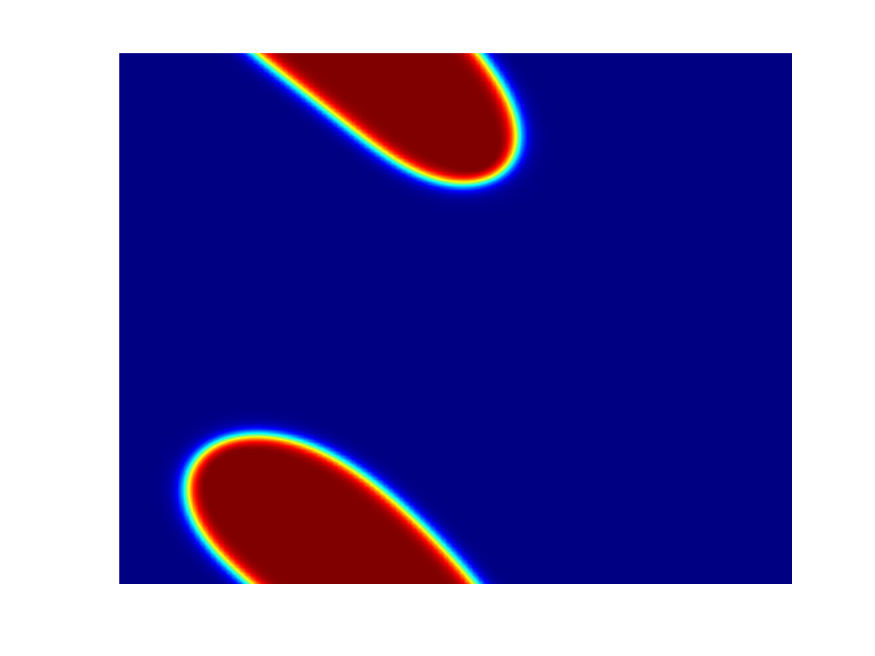}
\includegraphics[width=1.05in]{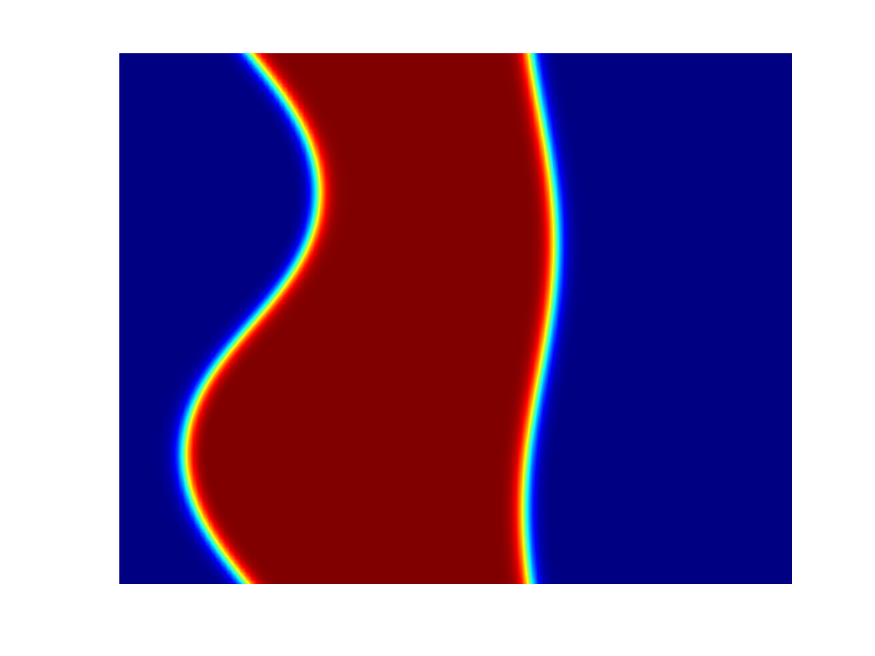}
\includegraphics[width=1.05in]{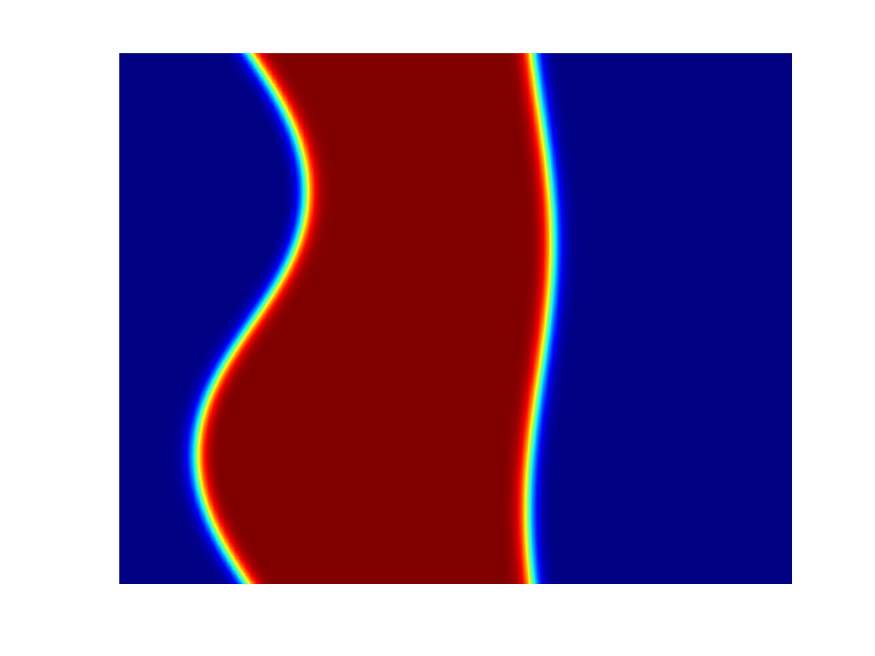}
\end{minipage}%
}%
\setlength{\abovecaptionskip}{0.0cm}
\setlength{\belowcaptionskip}{0.0cm}
\caption{\small The dynamic snapshots of the numerical solution $\phi$ at different time instants obtained by the DsCN scheme with uniform  (top, $\tau = 0.25$; bottom, $\tau = 0.025$) and adaptive (middle) time stepsizes.} \label{figEx3_1}
\end{figure}
\begin{figure}[!t]
\centering
\subfigure[maximum norm]
{
\includegraphics[width=0.32\textwidth]{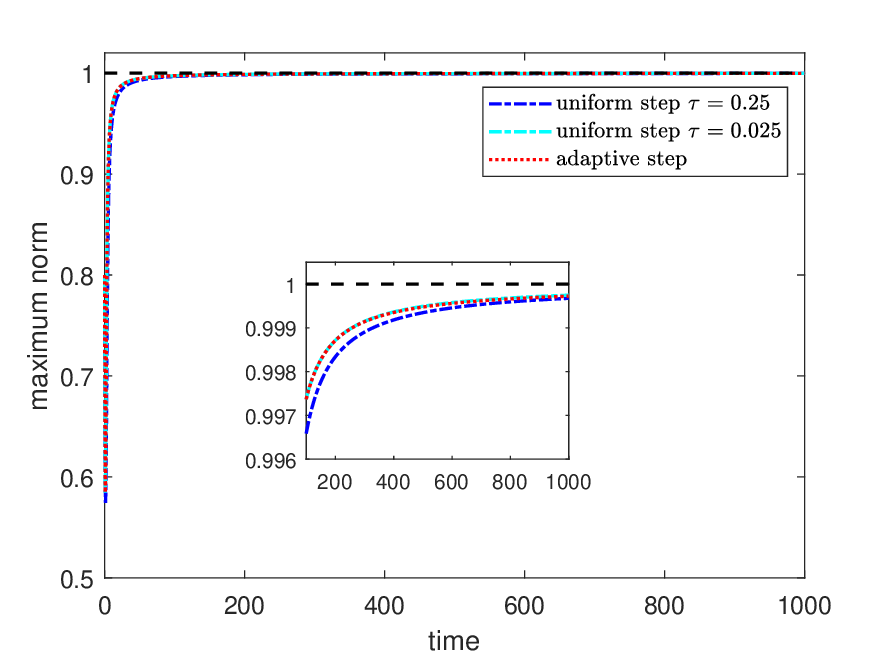}
\label{figEx3_2a}
}%
\subfigure[energy]
{
\includegraphics[width=0.32\textwidth]{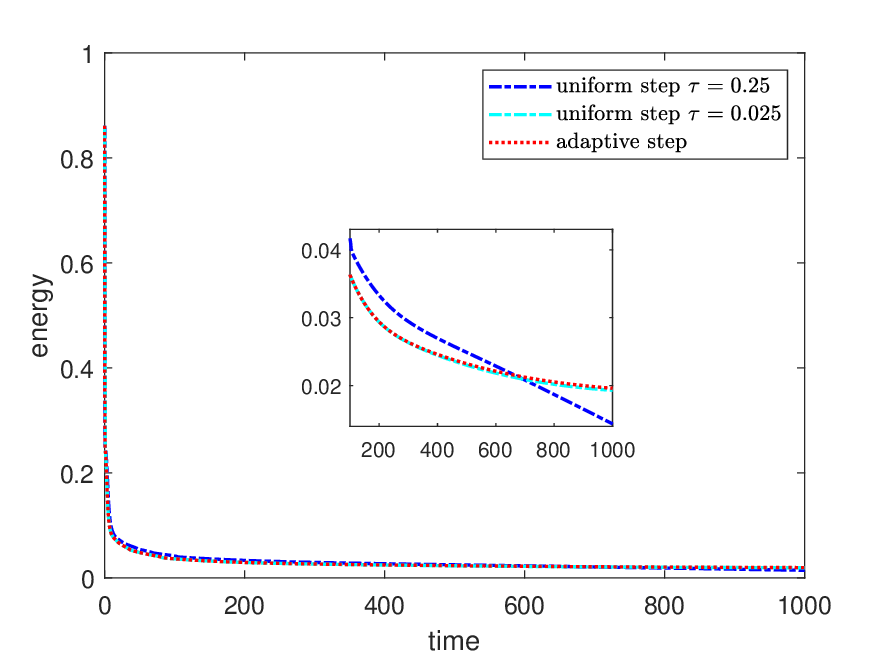}
\label{figEx3_2b}
}%
\subfigure[time stepsizes]
{
\includegraphics[width=0.32\textwidth]{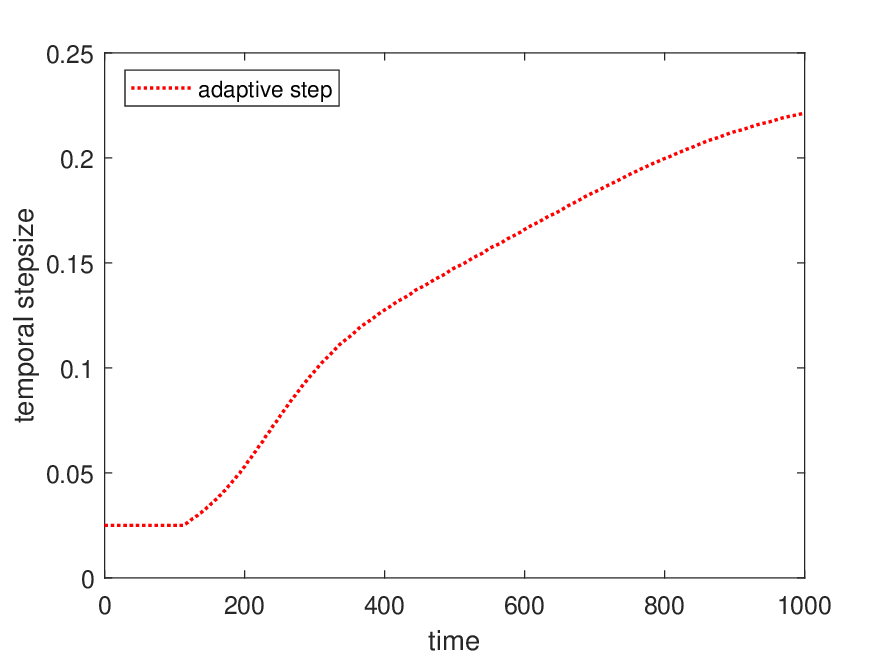}
\label{figEx3_2c}
}%
\setlength{\abovecaptionskip}{0.0cm}
\setlength{\belowcaptionskip}{0.0cm}
\caption{\small The maximum norm (left), energy (middle), and stepsizes (right) for the DsCN scheme.}
\label{figEx3_2}
\end{figure}
\begin{table}[!t]
\caption{\small CPU times and the total number of time steps yielded by the DsCN scheme.}%
\label{tab1_Ex3_1}
\setlength{\abovecaptionskip}{0.0cm}
\setlength{\belowcaptionskip}{0.0cm}
{\footnotesize\begin{tabular*}{\columnwidth}{@{\extracolsep\fill}cccc@{\extracolsep\fill}}
\toprule
time-stepping strategy  & uniform ($ \tau = 0.25$) & adaptive  & uniform ($ \tau = 0.025$)  \\
\midrule
$N$          &   4000        &  12590      &  40000           \\
CPU times    &   12 m 37 s   &  46 m 35 s   &  2 h 1 m 3 s            \\
\bottomrule
\end{tabular*}}
\end{table}
It is worth mentioning that the dynamics governed by the Allen--Cahn equation typically takes a long time to reach steady state and involves an initial rapid dynamics followed
by a slow coarsening process \cite{MOC_2023_Ju,SINUM_2020_Liao}. To enhance computational efficiency without compromising accuracy, we employ the following adaptive time-stepping strategy \cite{SISC_2011_Qiao}
\begin{equation}\label{Alg1:adaptive}
\begin{aligned}
\tau_{n+1} =  \max \bigg\{ \tau_{\min} , \f{\tau_{\max}}{ \sqrt{ 1 + \alpha \vert \delta_{t} E^{n} \vert^{2} } } \bigg\},
\end{aligned}
\end{equation}
where $ \delta_{t} E^{n} $ measures the energy variation, $ \tau_{\max} $ and $ \tau_{\min} $ are predetermined maximum and minimum time stepsizes, and $ \alpha $ is a tunable parameter.

In this simulation, we set $ \tau_{\max} = 0.25, \tau_{\min} = 0.025 $, $ \alpha = 10^{10} $, and all other settings are consistent with those described in subsection \ref{Ex4_2}. The performance of the proposed DsCN scheme with double stabilization parameters is evaluated under three time-stepping strategies: uniform stepsize with $\tau = 0.25$, uniform stepsize with $\tau = 0.025$, and an adaptive strategy determined by \eqref{Alg1:adaptive}. Figure \ref{figEx3_1} presents a comparison of the solution snapshots obtained using the different time stepsizes. It can be seen that the adaptive method yields coarsening patterns that are consistent with those generated using the small stepsize $ \tau = 0.025 $, whereas the larger uniform stepsize $ \tau = 0.25 $ results in noticeably inaccurate phase field profiles. Furthermore, as evidenced in Figures \ref{figEx3_2a}--\ref{figEx3_2b}, the adaptive DsCN scheme perfectly maintains both the discrete MBP and energy stability, and moreover, the time evolutions of both the maximum-norm of $ \phi $ and the total energy align closely with the results from the small uniform time stepsize. Finally, Table \ref{tab1_Ex3_1} highlights the computational advantage of the adaptive approach, demonstrating a reduction of more than $ 60 \% $ CPU time. This improvement is primarily due to the frequent use of larger time stepsizes, as illustrated in Figure \ref{figEx3_2c}, where only a limited number of smaller stepsizes are employed when the energy decays rapidly. Collectively, these results confirm that the adaptive DsCN scheme can significantly improve computational efficiency while maintaining the desired physical attributions and numerical accuracy.
\subsection{Effect of mobility on dynamical behavior}\label{sub:Eff_mobi}
\begin{figure}[!t]
\vspace{-10pt}
\centering
\includegraphics[width=0.45\textwidth]{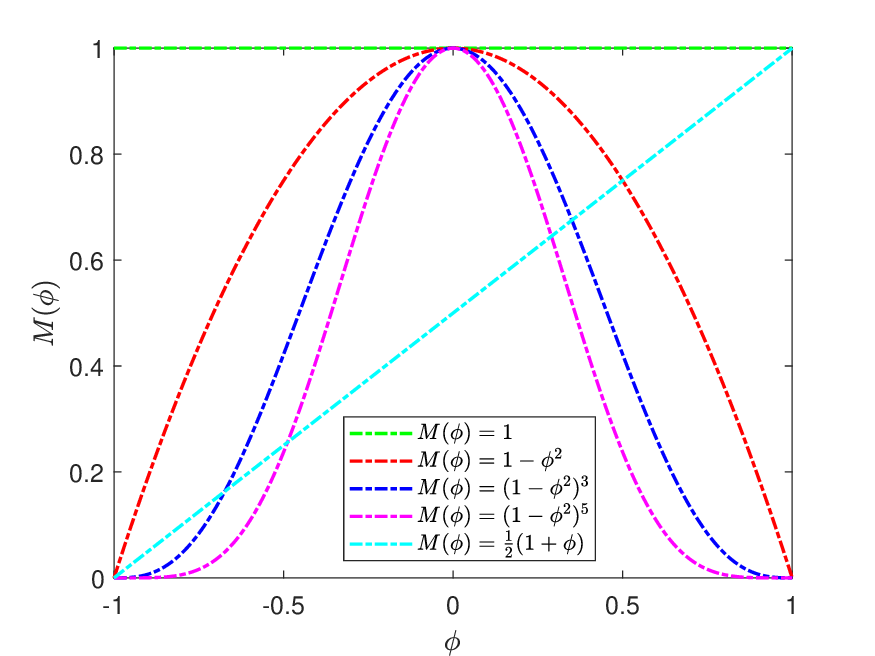}
\setlength{\abovecaptionskip}{0.0cm}
\setlength{\belowcaptionskip}{0.0cm}
\caption{\small Plot of $\mm(\phi)$ on the interval $[-1,1]$.}
\label{figEx4_1}
\end{figure}
\begin{figure}[!t]
\vspace{-12pt}
\centering
\subfigure[$t=10$]
{
\begin{minipage}[t]{0.18\linewidth}
\centering
\includegraphics[width=1.05in]{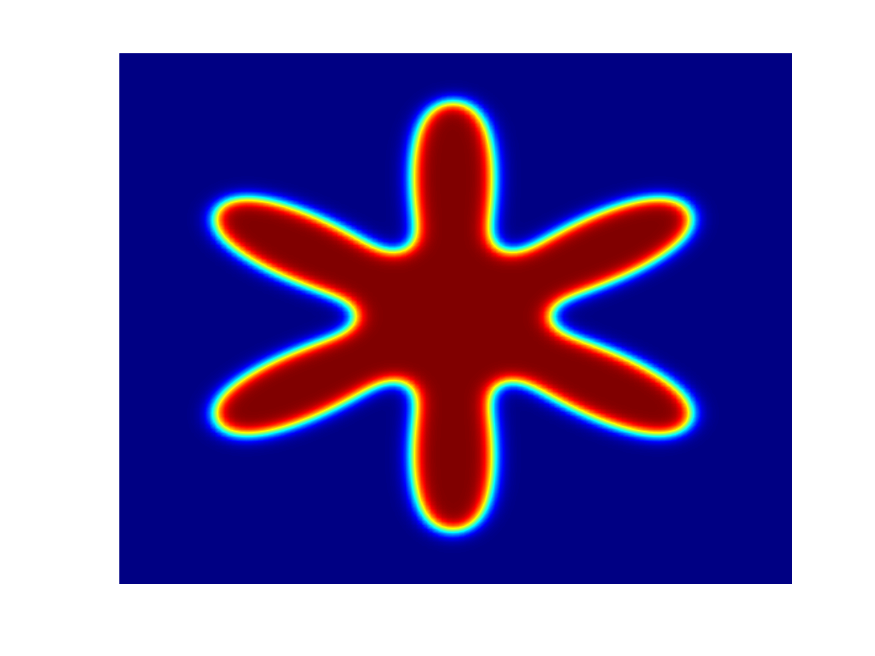}
\includegraphics[width=1.05in]{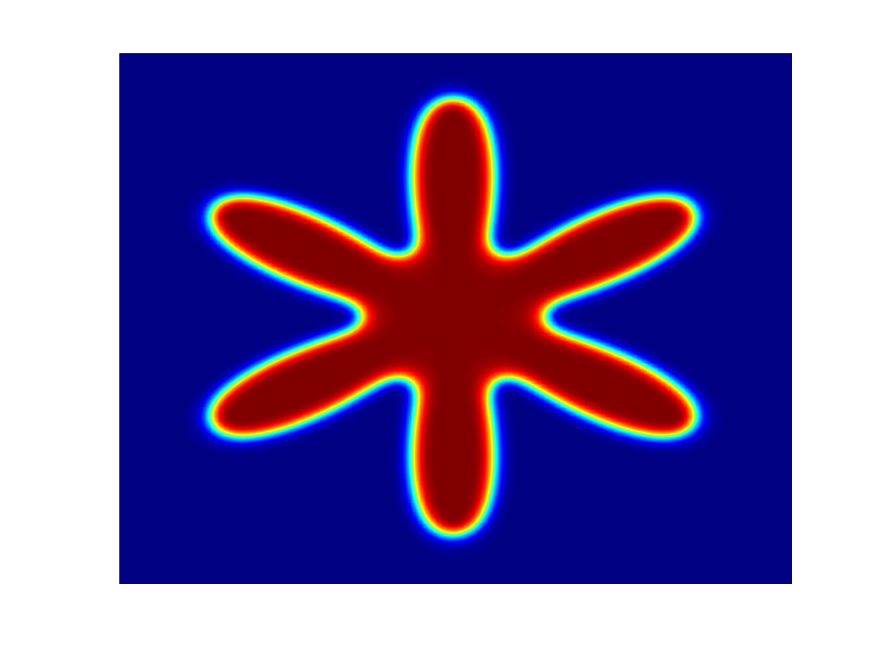}
\includegraphics[width=1.05in]{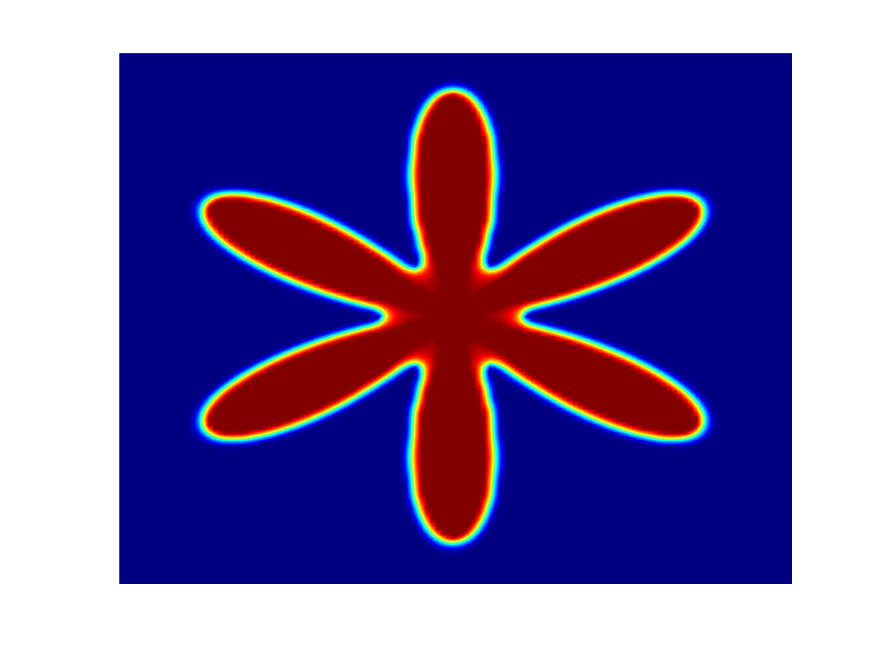}
\includegraphics[width=1.05in]{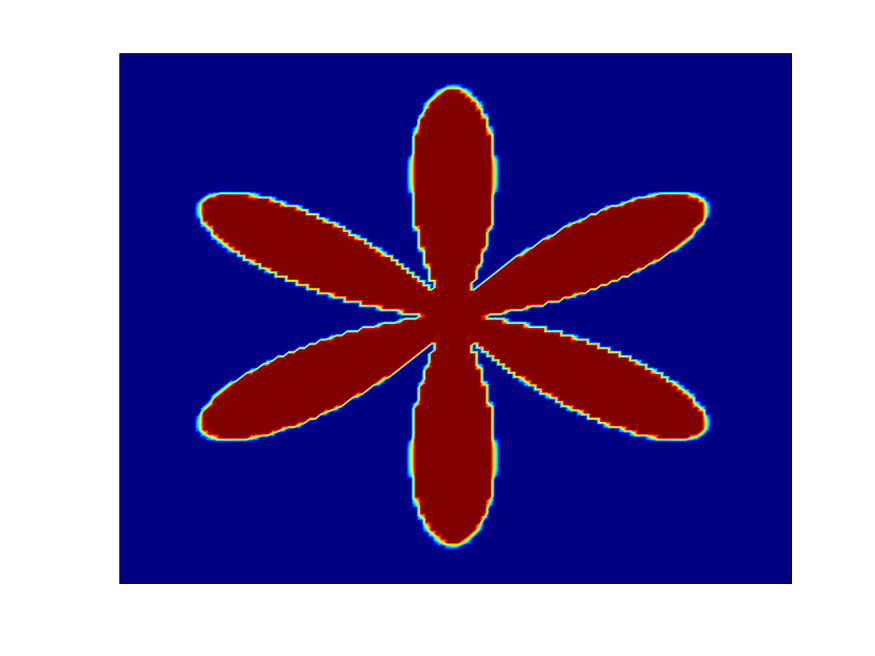}
\includegraphics[width=1.05in]{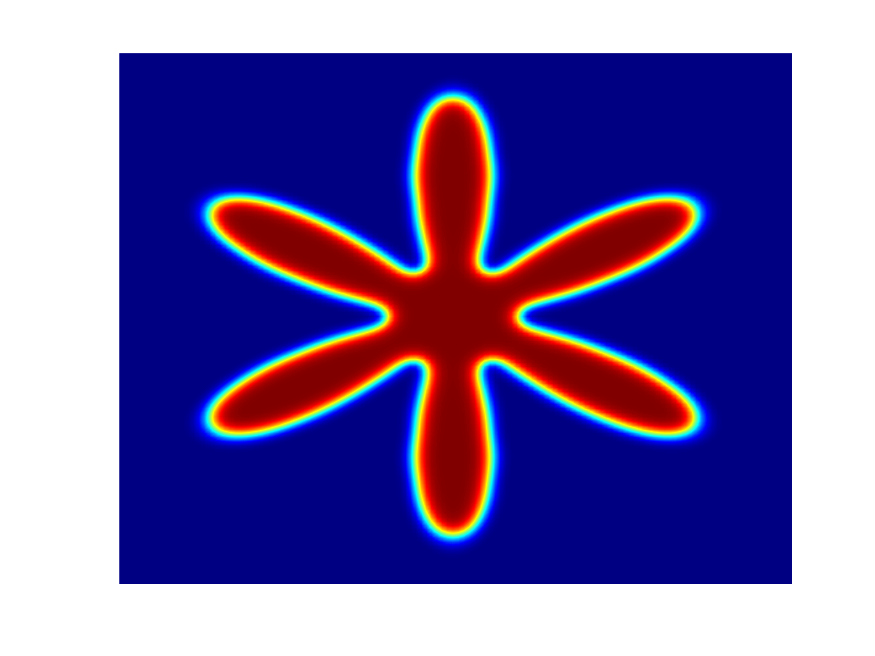}
\end{minipage}%
}%
\subfigure[$t=20$]
{
\begin{minipage}[t]{0.18\linewidth}
\centering
\includegraphics[width=1.05in]{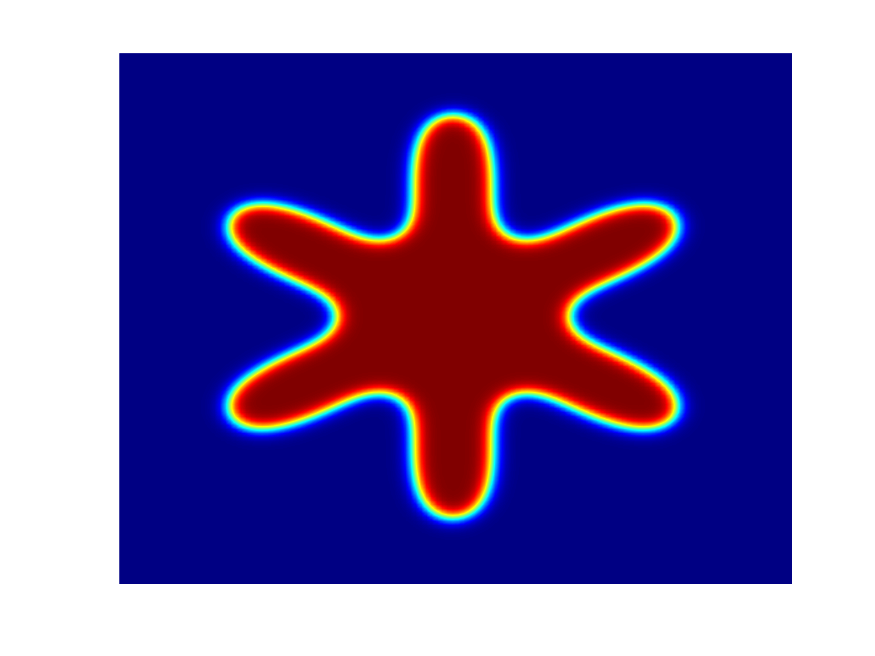}
\includegraphics[width=1.05in]{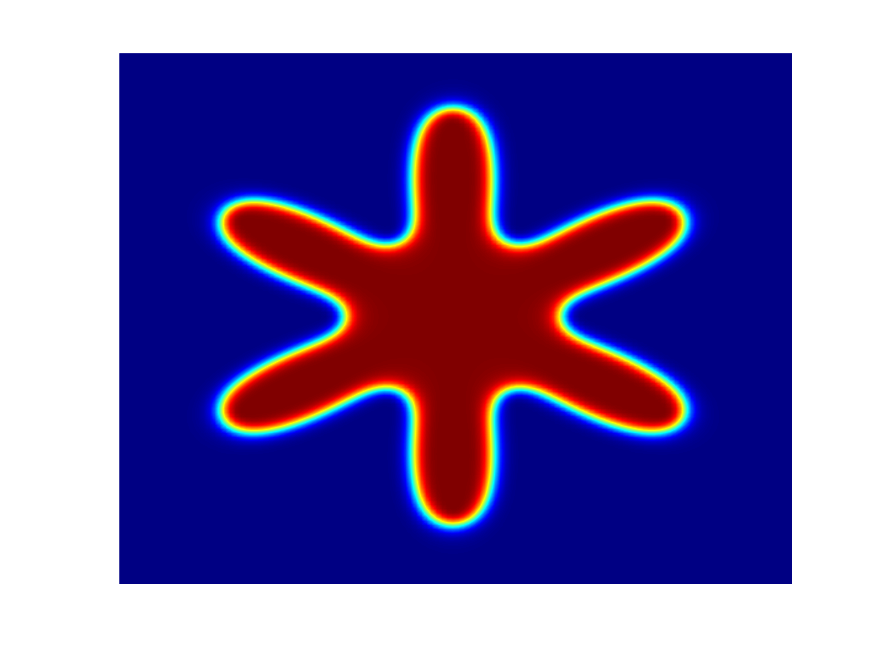}
\includegraphics[width=1.05in]{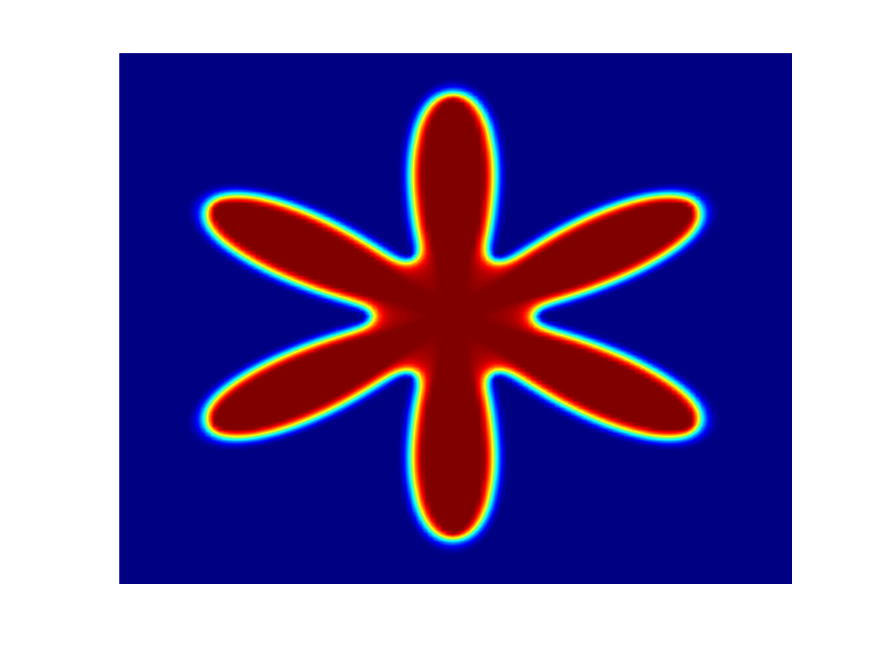}
\includegraphics[width=1.05in]{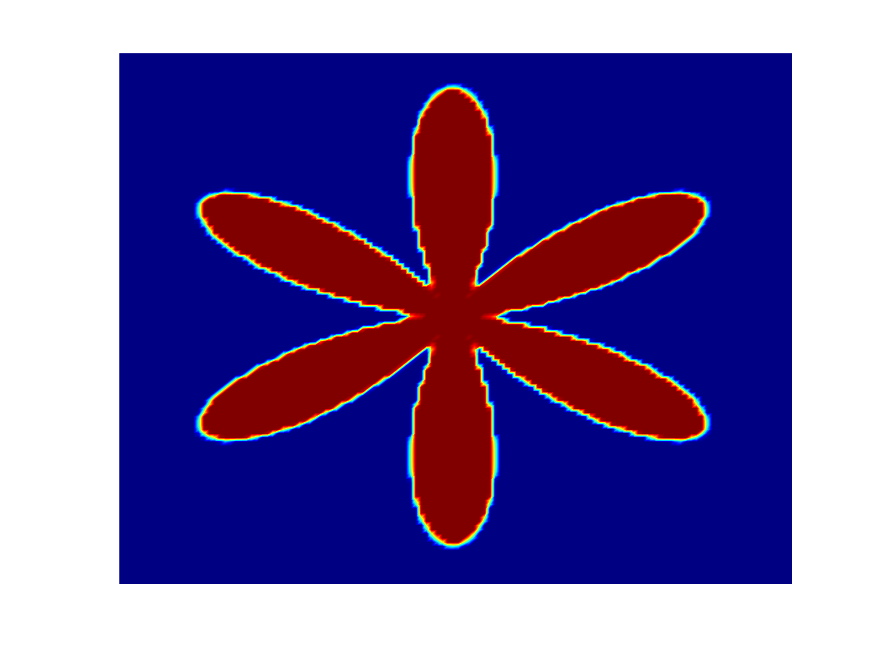}
\includegraphics[width=1.05in]{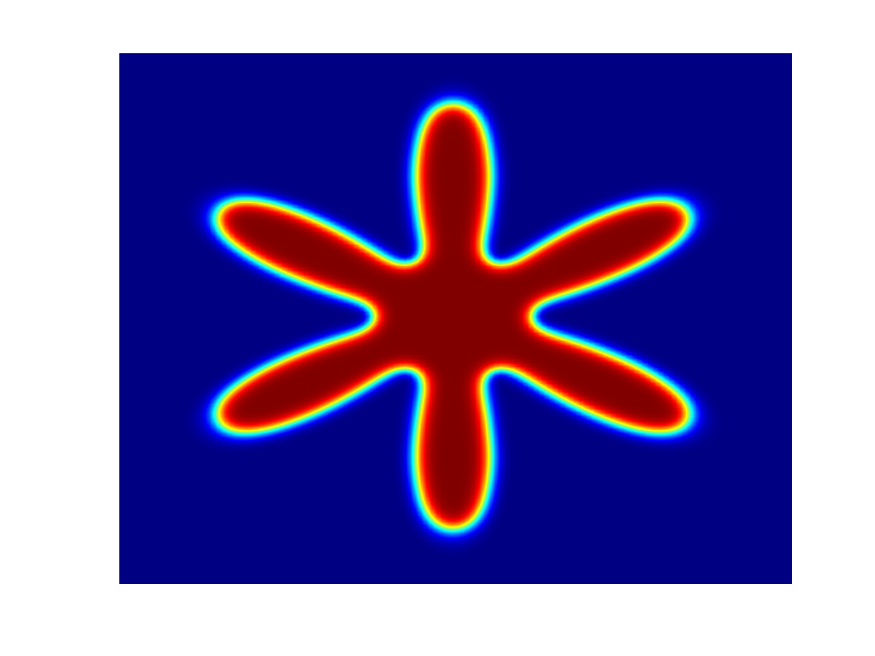}
\end{minipage}%
}%
\subfigure[$t=50$]
{
\begin{minipage}[t]{0.18\linewidth}
\centering
\includegraphics[width=1.05in]{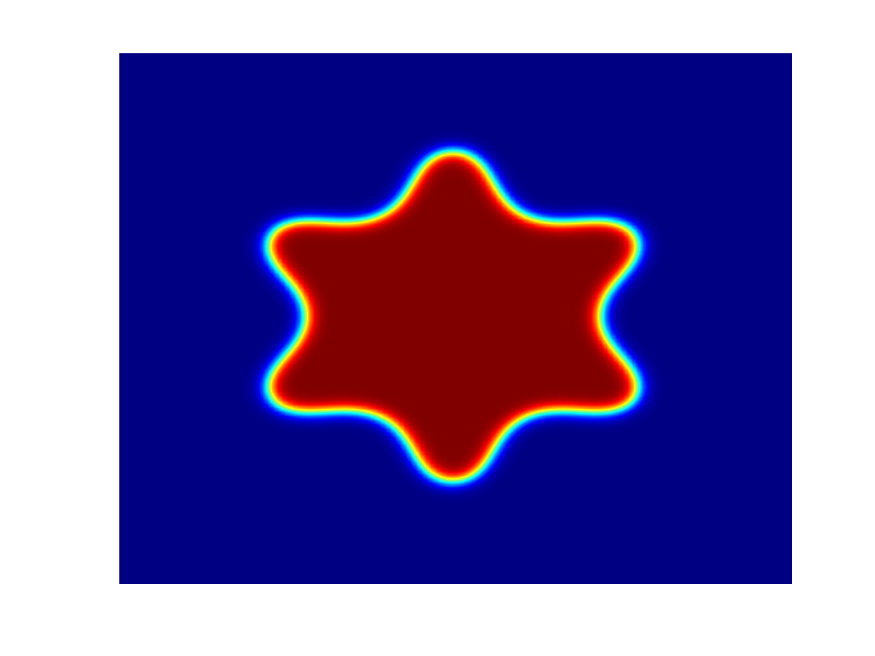}
\includegraphics[width=1.05in]{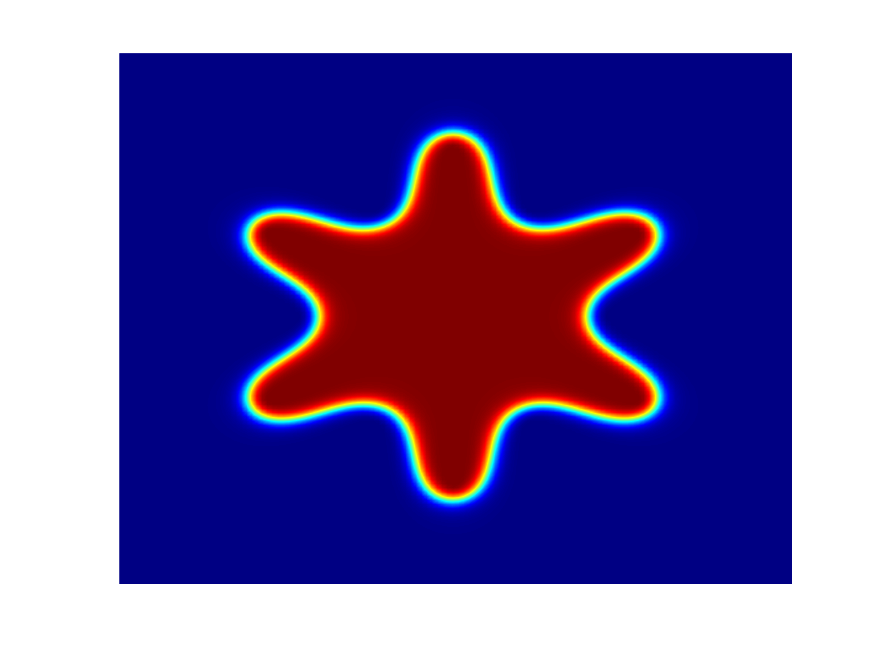}
\includegraphics[width=1.05in]{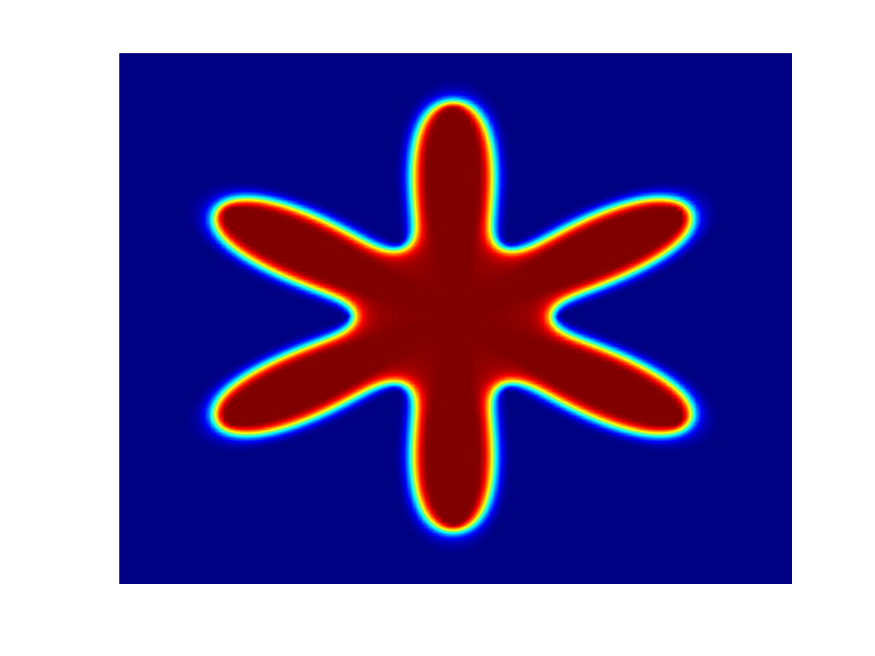}
\includegraphics[width=1.05in]{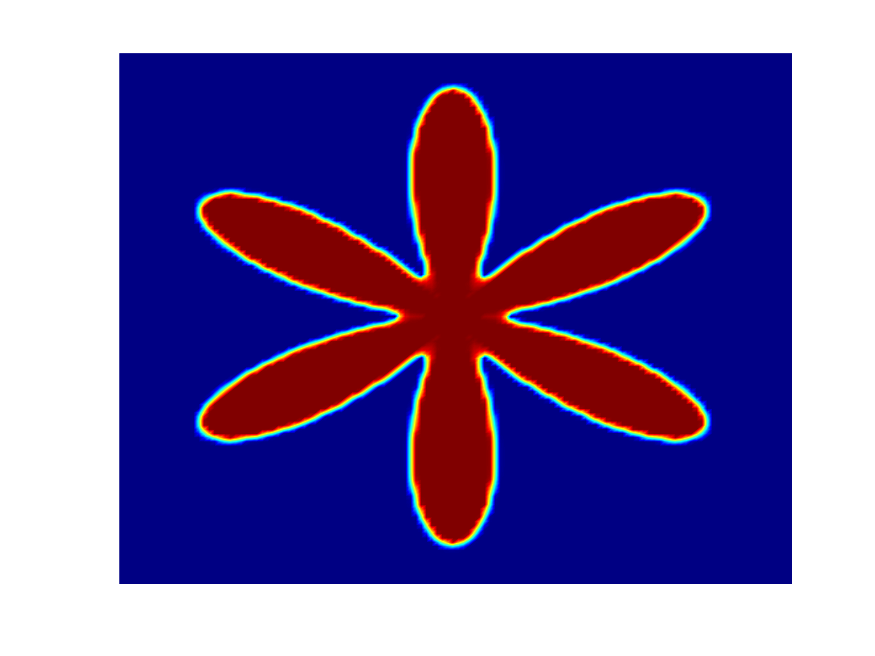}
\includegraphics[width=1.05in]{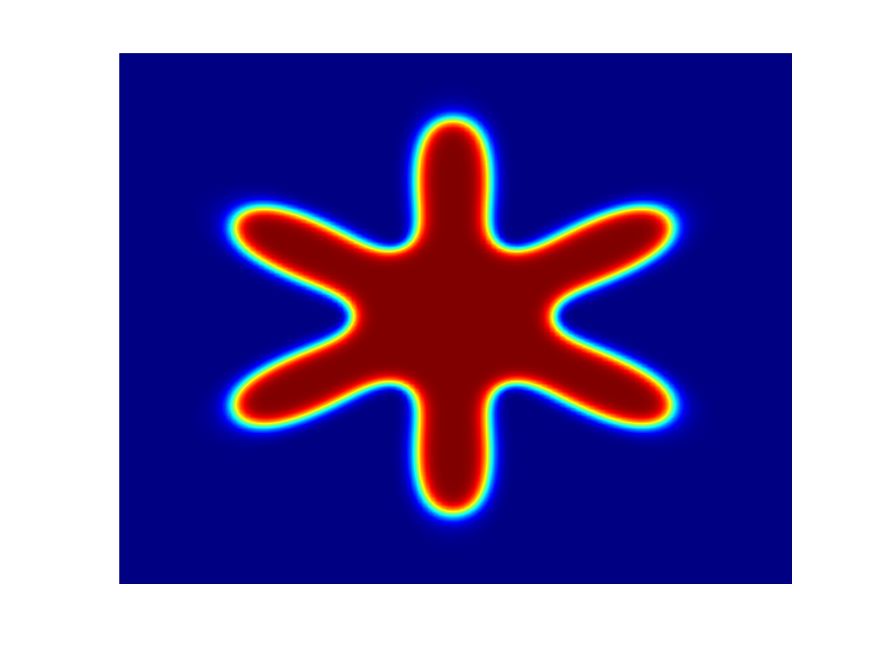}
\end{minipage}%
}%
\subfigure[$t=100$]
{
\begin{minipage}[t]{0.18\linewidth}
\centering
\includegraphics[width=1.05in]{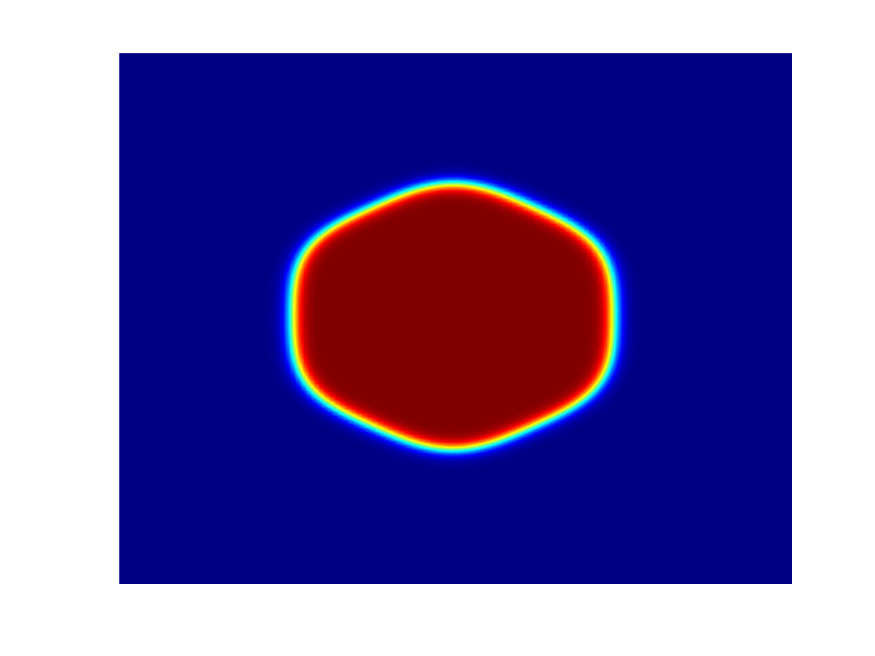}
\includegraphics[width=1.05in]{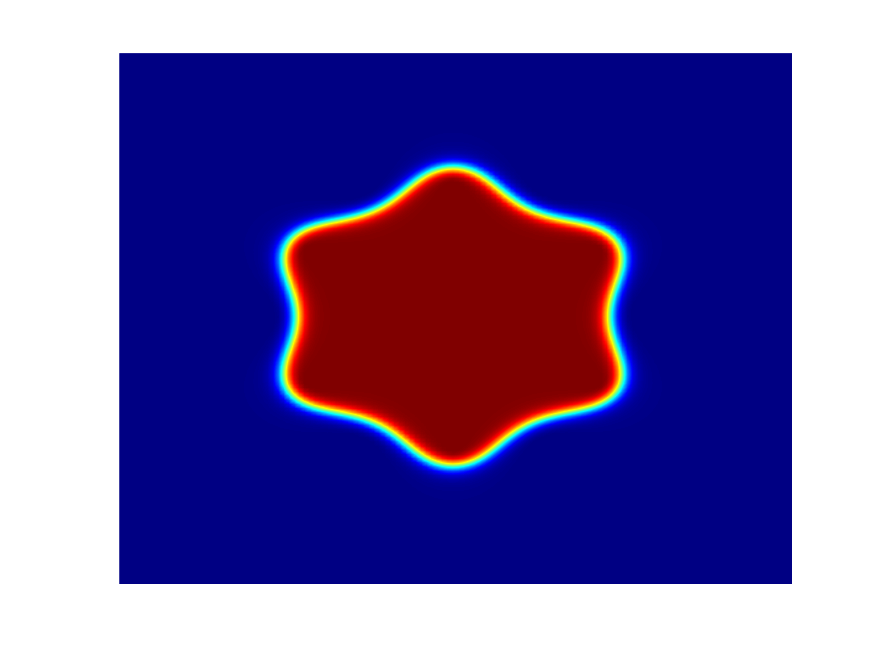}
\includegraphics[width=1.05in]{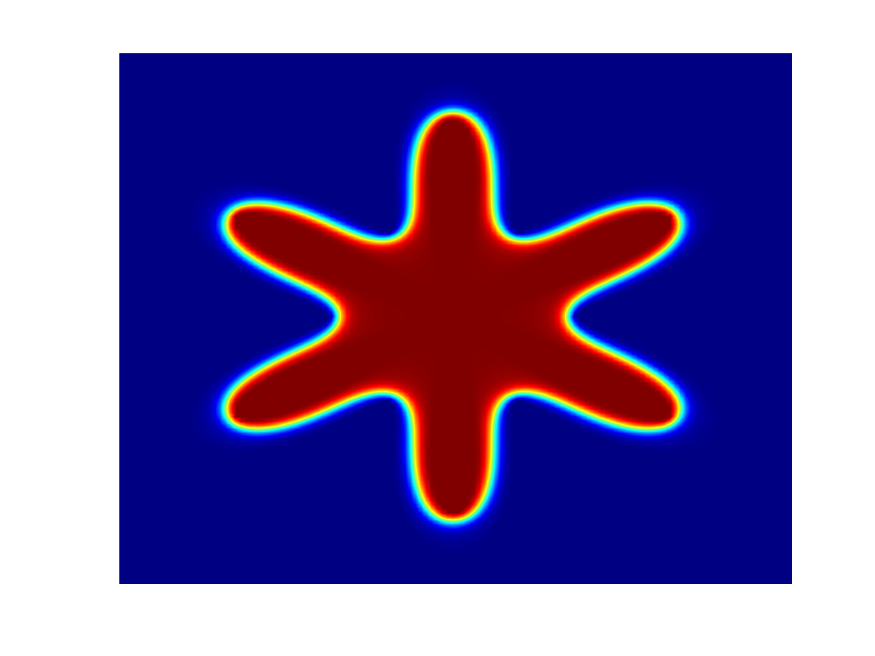}
\includegraphics[width=1.05in]{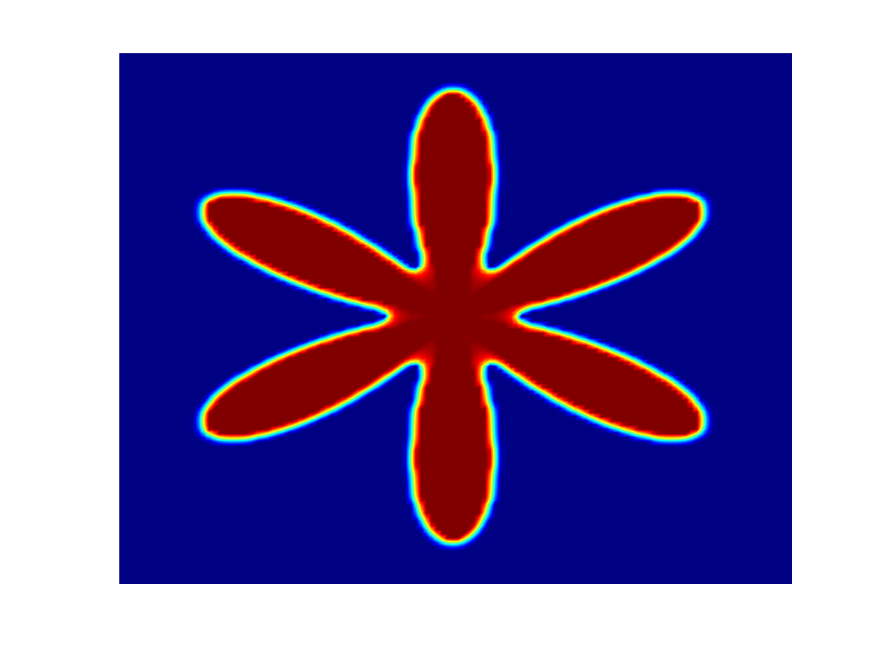}
\includegraphics[width=1.05in]{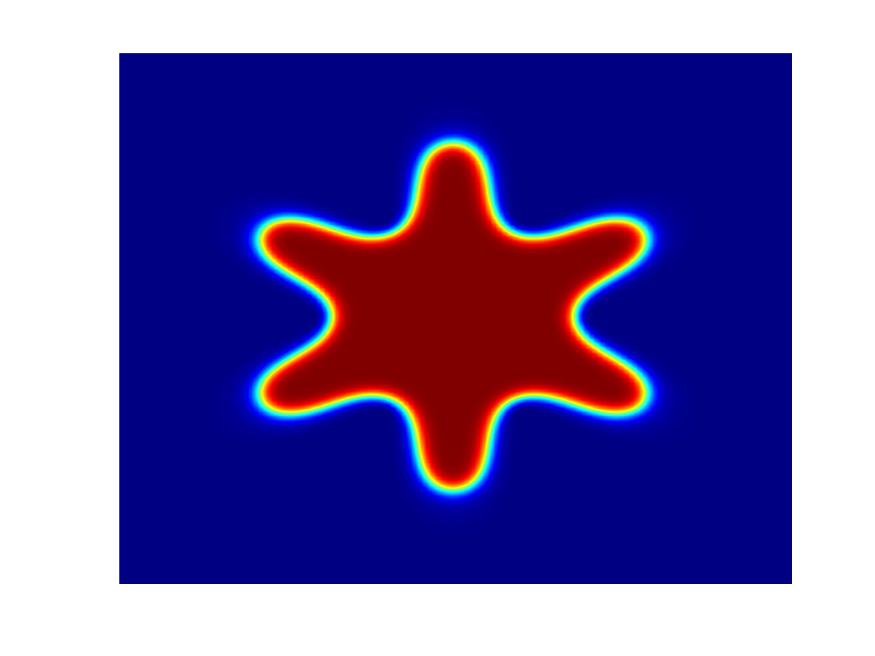}
\end{minipage}%
}%
\subfigure[$t=200$]
{
\begin{minipage}[t]{0.18\linewidth}
\centering
\includegraphics[width=1.05in]{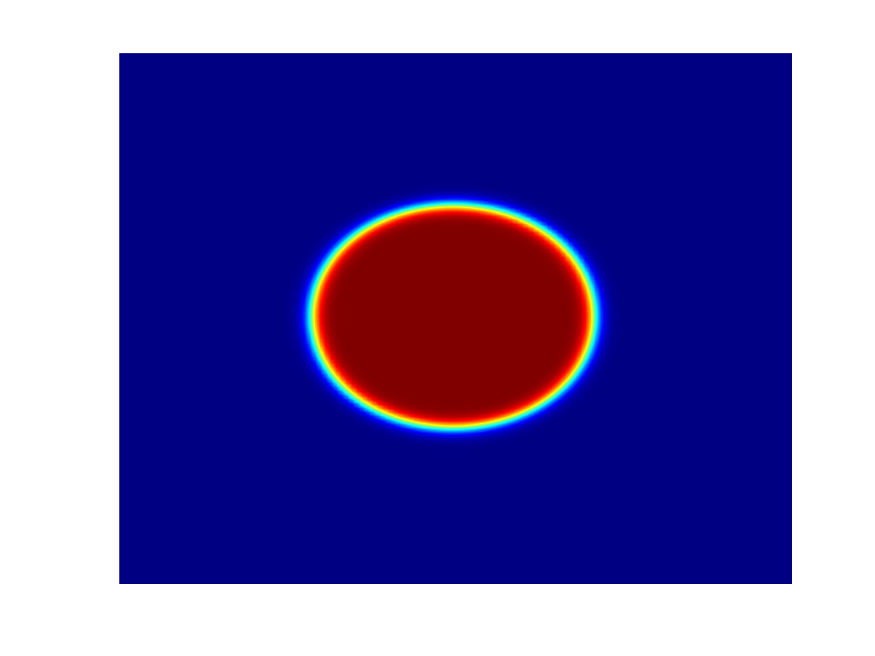}
\includegraphics[width=1.05in]{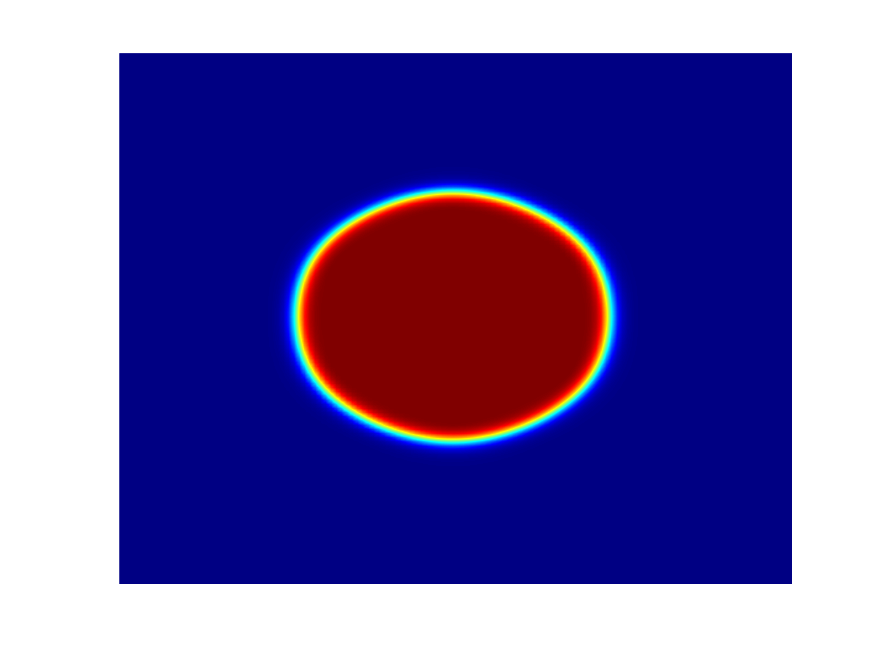}
\includegraphics[width=1.05in]{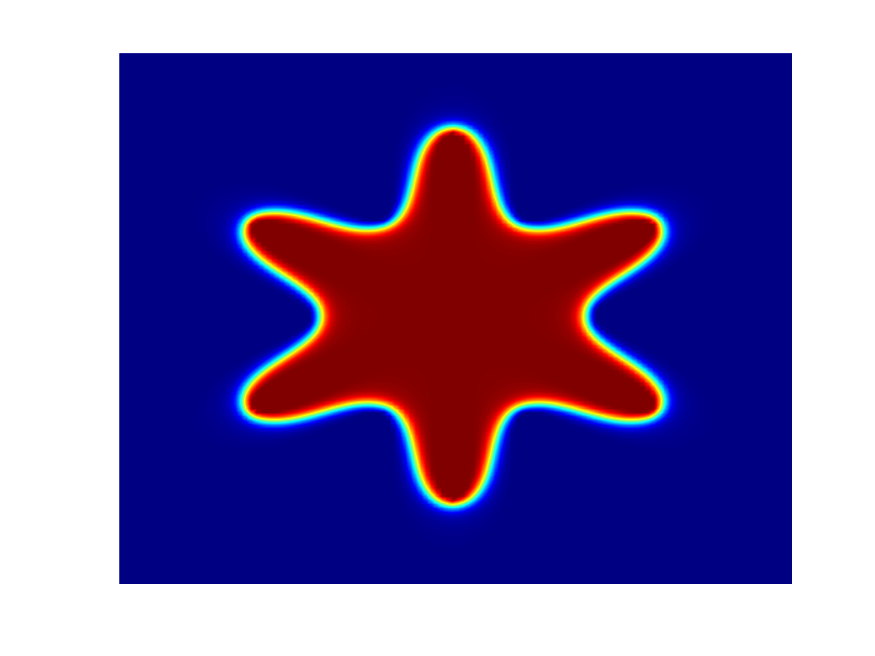}
\includegraphics[width=1.05in]{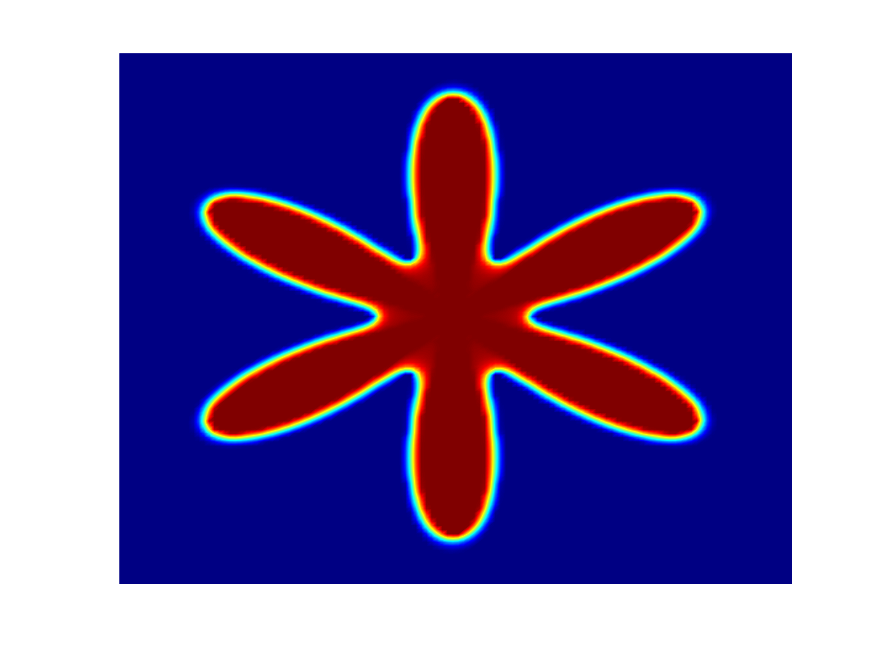}
\includegraphics[width=1.05in]{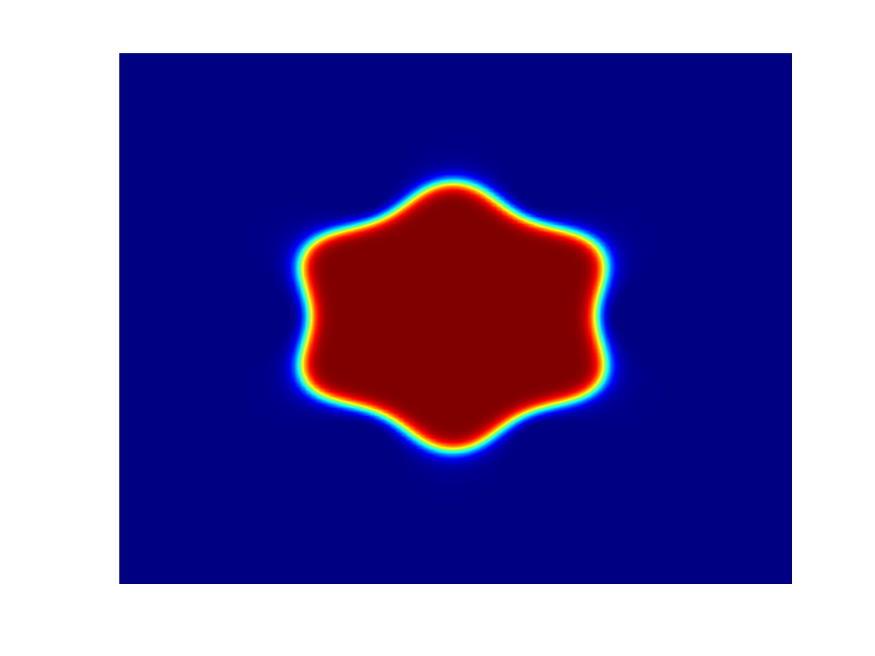}
\end{minipage}%
}%
\setlength{\abovecaptionskip}{0.0cm}
\setlength{\belowcaptionskip}{0.0cm}
\caption{\small The dynamic snapshots of the numerical solution $\phi$ at different time instants obtained by the adaptive DsCN scheme for different mobilities: $ \mm(\phi) = ( 1 - \phi^{2} )^{m} $ with $m=0, 1, 3, 5$, and $ \mm(\phi) = \f{1 + \phi}{2} $ (from top to bottom).} \label{figEx4_2}
\end{figure}
In this subsection, we investigate the influence of the phase-dependent degenerate mobility $ \mm(\phi) $ on the dynamical behavior. To this end, we consider the Allen--Cahn equation \eqref{Model:MAC1}--\eqref{Model:MAC2} with $ \varepsilon = 0.01 $ and examine several representative forms of the mobility function:
\begin{itemize}
\item[--] constant mobility $\mm(\phi) \equiv 1$, corresponding to a bulk-diffusion-dominated dynamical process;

\item[--] two-sided degenerate mobility given by $\mm(\phi) = ( 1 - \phi^{2} )^{m} $ for $  m > 0$ \cite{ARMA_2016_Du,JSP_1994_Taylor}. In this case, the mobility vanishes in both pure phases, meaning diffusion is absent over there and the dynamics is instead driven by diffusion across the interfacial region. This leads to interface-diffusion-dominated dynamics \cite{AMM_1994_Cahn,JSP_1994_Taylor}. The exponent $m$ characterizes the degree of degeneracy: the larger the value of $m$, the more pronounced the degeneracy becomes. In our study, representative cases $m=1$, 3, 5 are tested;

\item[--] one-sided degenerate mobility defined by $ \mm(\phi) = \f{1}{2} ( 1 + \phi)$ \cite{SIAM_2012_Du,JCP_2016_Du,CiCP_2010_Du}. In this setting, the mobility remains non-degenerate in the positive phase but is degenerate in the negative phase. This asymmetry induces a large mobility difference between the two phases, thereby complicating the overall dynamics.
\end{itemize}
Figure \ref{figEx4_1} shows the dependence of these mobility functions on $ \phi $, restricted to the interval $[-1,1]$ in accordance with the MBP. The initial state is chosen as
$$
\phi_{\text{init}}(x,y) = 0.9 \tanh \f{ 1.5 + 1.2\cos( 6\theta ) - 2\pi r }{ \sqrt{2 \lambda } }
$$
with
$$
\theta = \arctan \f{ y - 0.5 }{ x - 0.5 }, \quad  r = \sqrt{ \left( x - 0.5 \right)^2 + \left( y - 0.5 \right)^2 }.
$$

In this example, simulations are carried out using the developed adaptive DsCN scheme with parameters $ \tau_{\max} = 0.25, \tau_{\min} = 0.025 $ and $ \alpha = 10^{7}$. The stabilization parameter is chosen as $ S_{2} = \left( S_{1}/4 + \varepsilon^{2}/h^2 \right)^{2} \approx 4.5728 $, and the computational domain $ \Omega = (0,1)^{2} $ is uniformly discretized into $M = 128$ elements in each spatial direction. Figure \ref{figEx4_2} presents the time evolutions of the phase-field variable for different mobility functions at selected time instants. The results illustrate the dynamic relaxation of an isolated shape into a circular disk, which is consistent with previous studies \cite{JCP_2014_Xu,JSC_2024_Li,JSC_2010_Wise}. Moreover, the relaxation rate is significantly affected by the choice of the mobility function: stronger degeneracy leads to markedly slower evolution. This trend is further corroborated by the maximum norm and energy curves shown in Figure \ref{figEx4_3}. As the degree of degeneracy increases, the rate of energy decay decreases, and it takes longer for the phase-field variable to attain $\| \phi \|_{\infty} = 1$, which typically signals the emergence of pure-phase state. Of particular interest is the case with the one-sided degenerate mobility. 
Due to the huge mobility disparity between the two phases, the time required for the emergence of the positive pure phase (which evolves more quickly) and the negative pure phase (which evolves more slowly) differs significantly from those observed for other mobility cases; as illustrated in Figure \ref{figEx4_4}.
\begin{figure}[!t]
\vspace{-12pt}
\centering
\subfigure[maximum norm]
{
\includegraphics[width=0.4\textwidth]{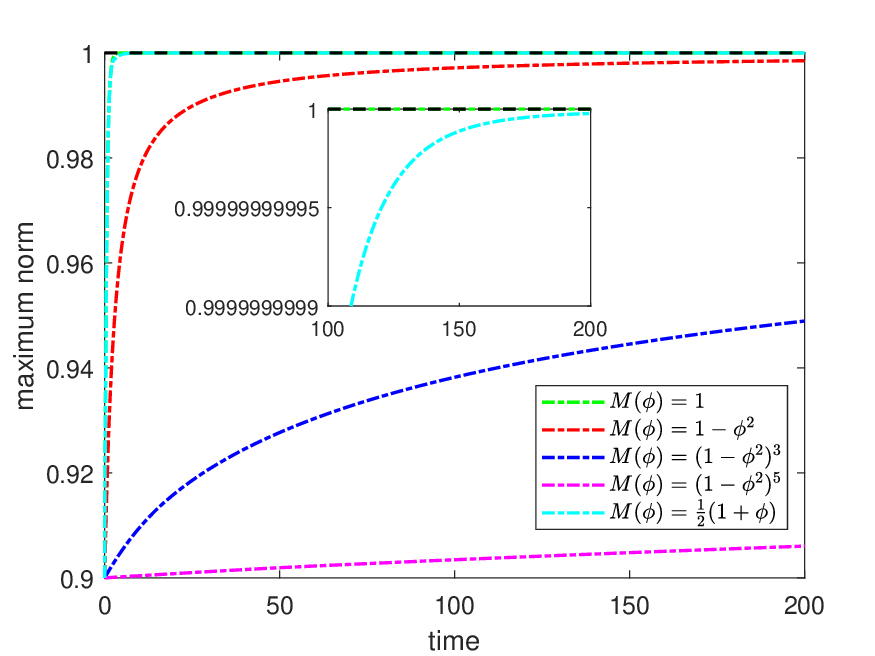}
}%
\subfigure[energy]
{
\includegraphics[width=0.4\textwidth]{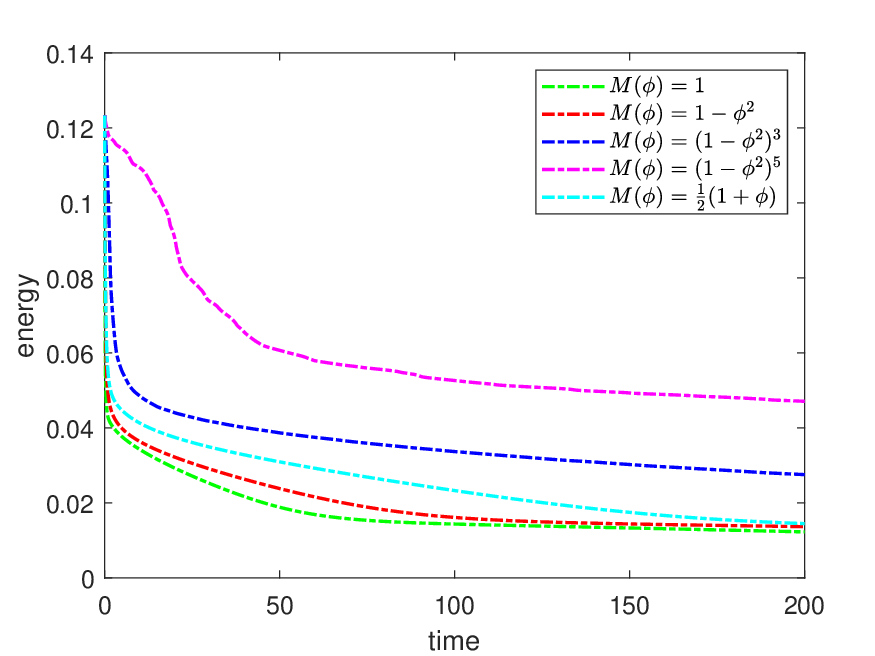}
}%
\setlength{\abovecaptionskip}{0.0cm}
\setlength{\belowcaptionskip}{0.0cm}
\caption{\small The maximum norm and energy of simulated solutions computed by the DsCN scheme for different mobilities.}
\label{figEx4_3}
\end{figure}
\begin{figure}[!t]
\vspace{-12pt}
\centering
\subfigure[maximum value]
{
\includegraphics[width=0.4\textwidth]{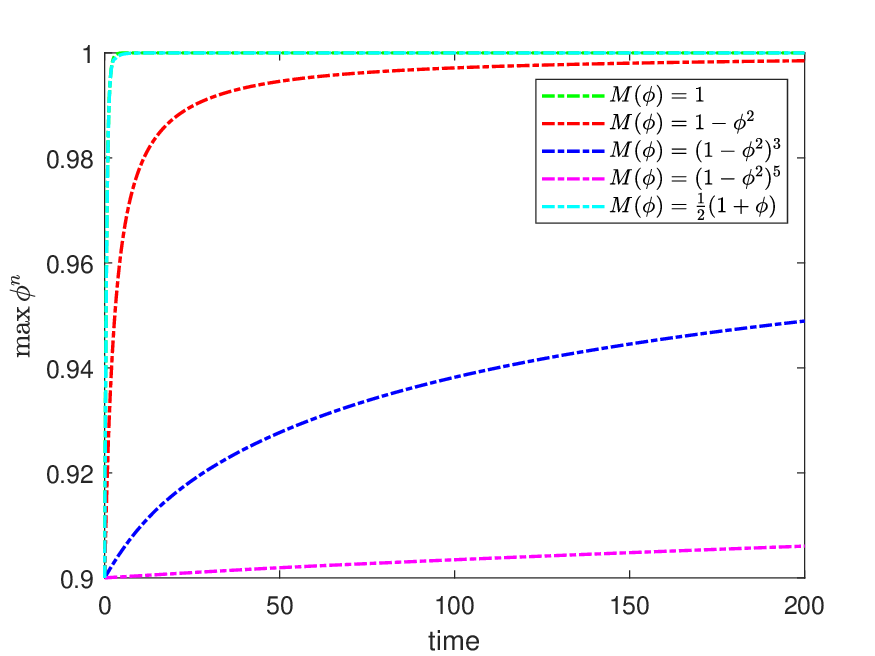}
}%
\subfigure[minimum value]
{
\includegraphics[width=0.4\textwidth]{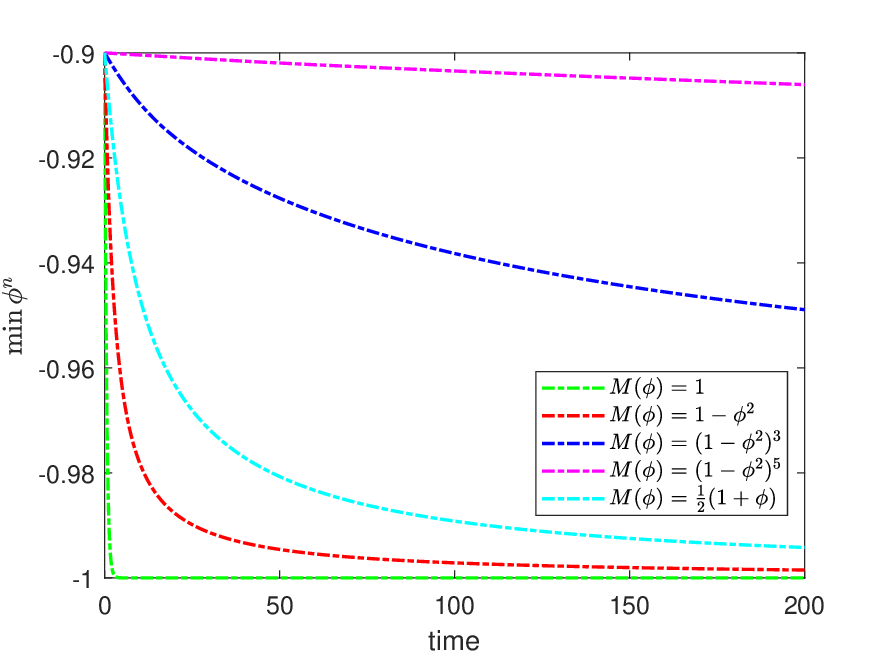}
}%
\setlength{\abovecaptionskip}{0.0cm}
\setlength{\belowcaptionskip}{0.0cm}
\caption{\small The maximum and minimum values of simulated solutions computed by the DsCN scheme for different mobilities.}
\label{figEx4_4}
\end{figure}

\begin{figure}[!t]
\vspace{-10pt}
\centering
\subfigure[iso-surfaces]
{
\begin{minipage}[t]{0.23\linewidth}
\centering
\includegraphics[width=1.3in]{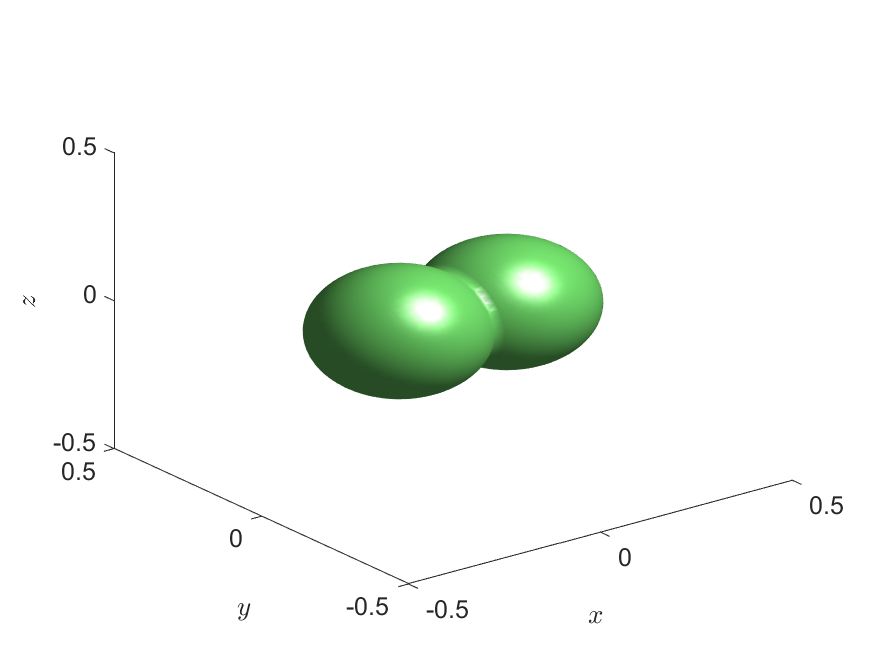}
\includegraphics[width=1.3in]{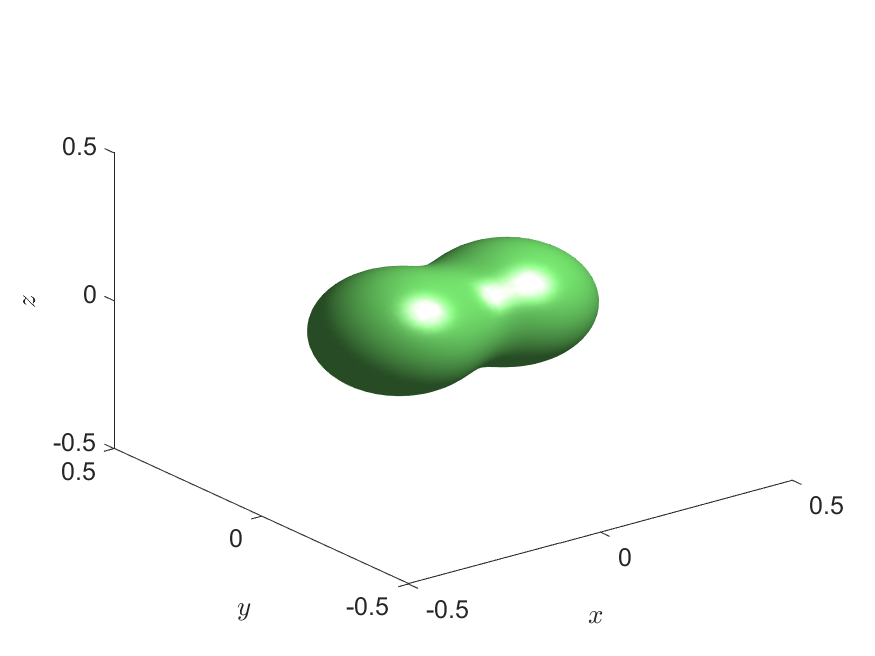}
\includegraphics[width=1.3in]{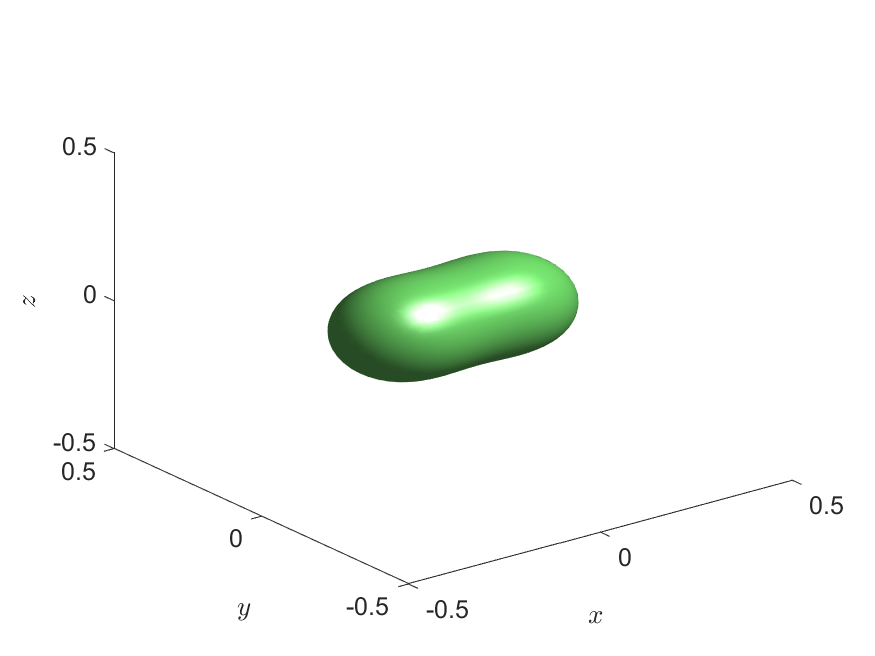}
\includegraphics[width=1.3in]{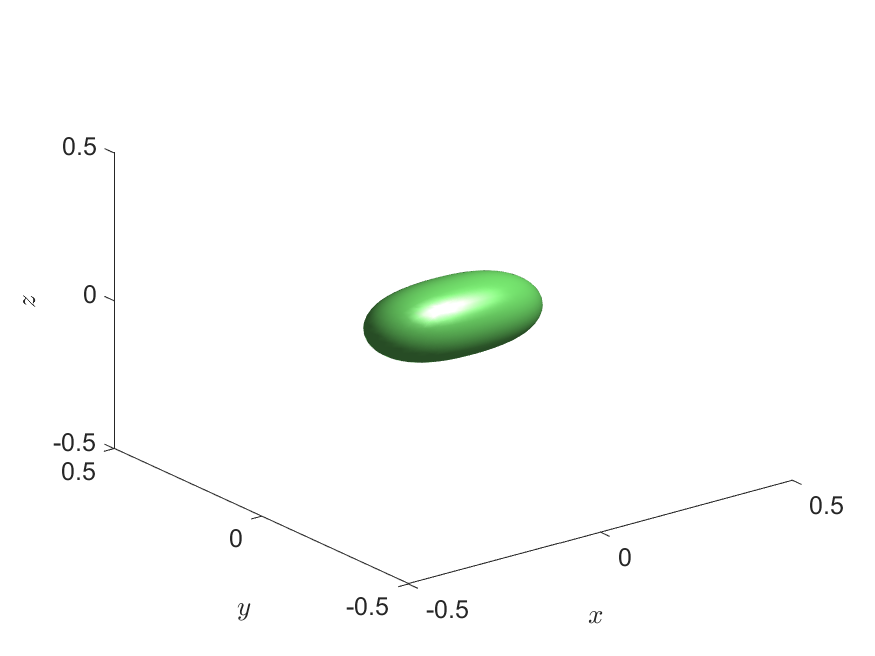}
\includegraphics[width=1.3in]{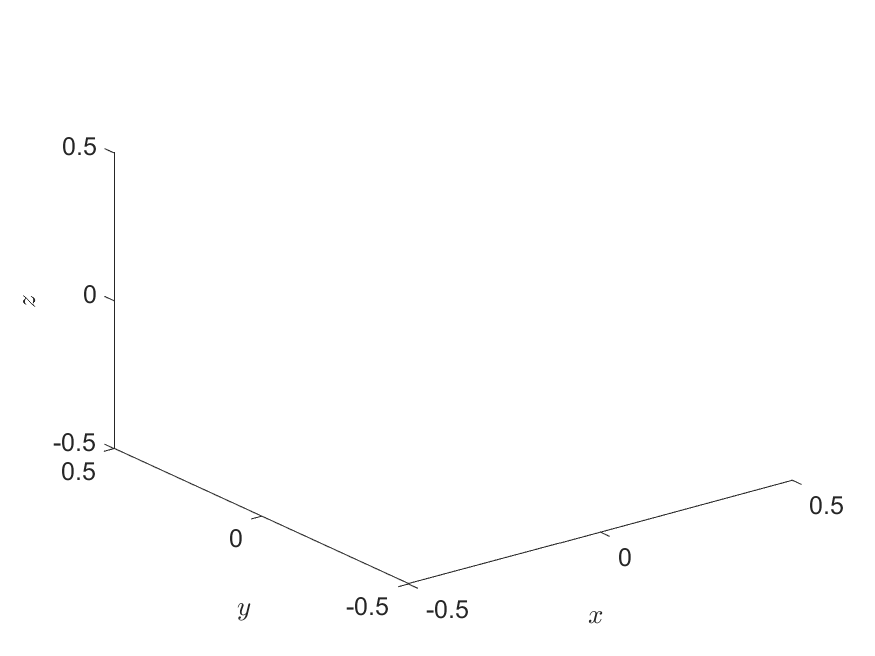}
\end{minipage}%
}%
\subfigure[slices ($x=0$)]
{
\begin{minipage}[t]{0.23\linewidth}
\centering
\includegraphics[width=1.3in]{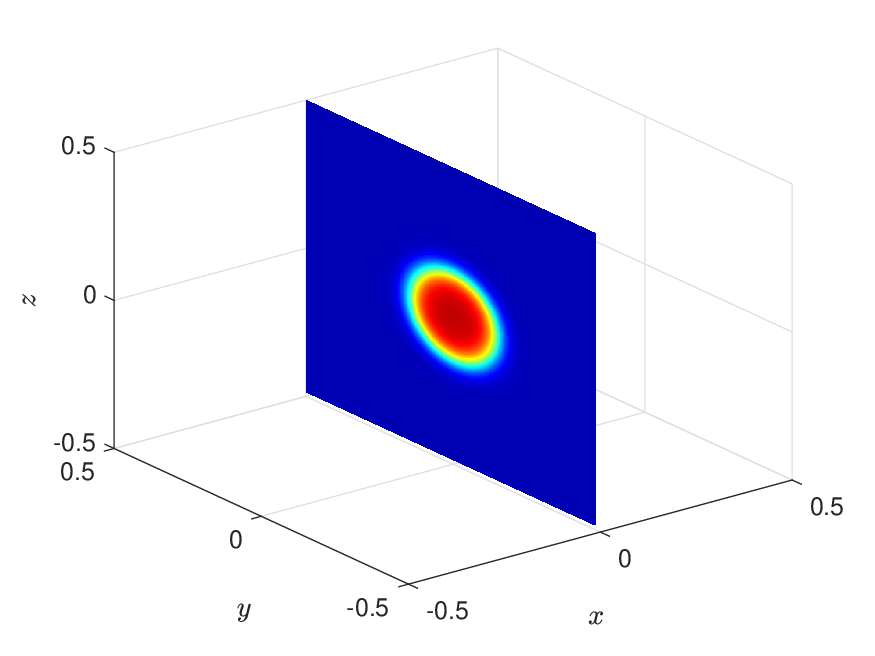}
\includegraphics[width=1.3in]{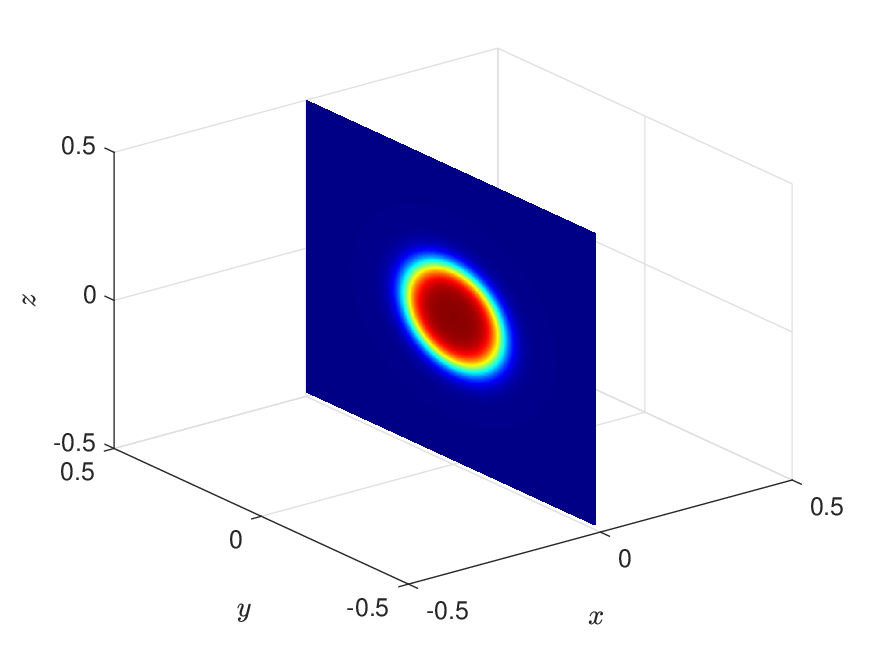}
\includegraphics[width=1.3in]{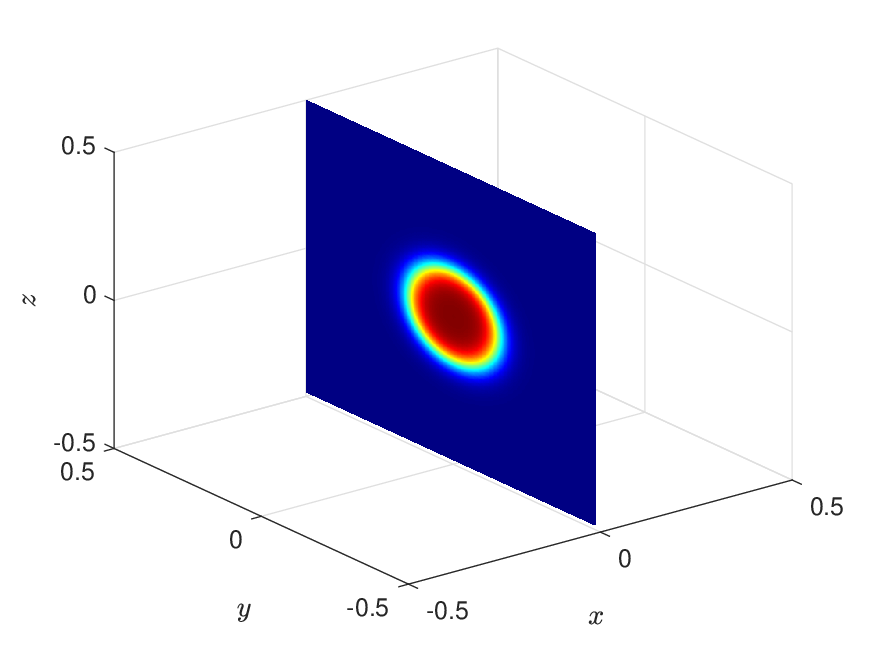}
\includegraphics[width=1.3in]{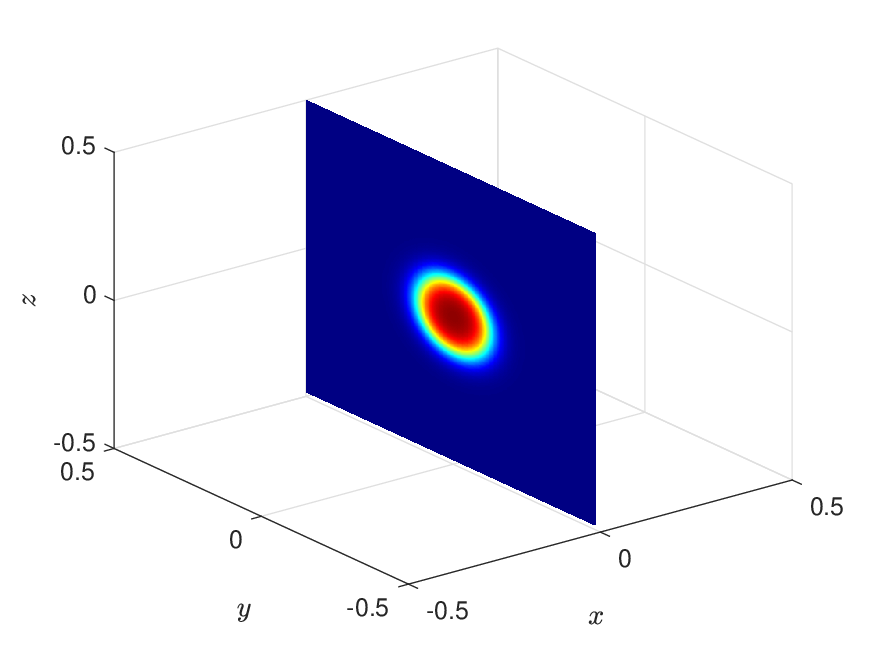}
\includegraphics[width=1.3in]{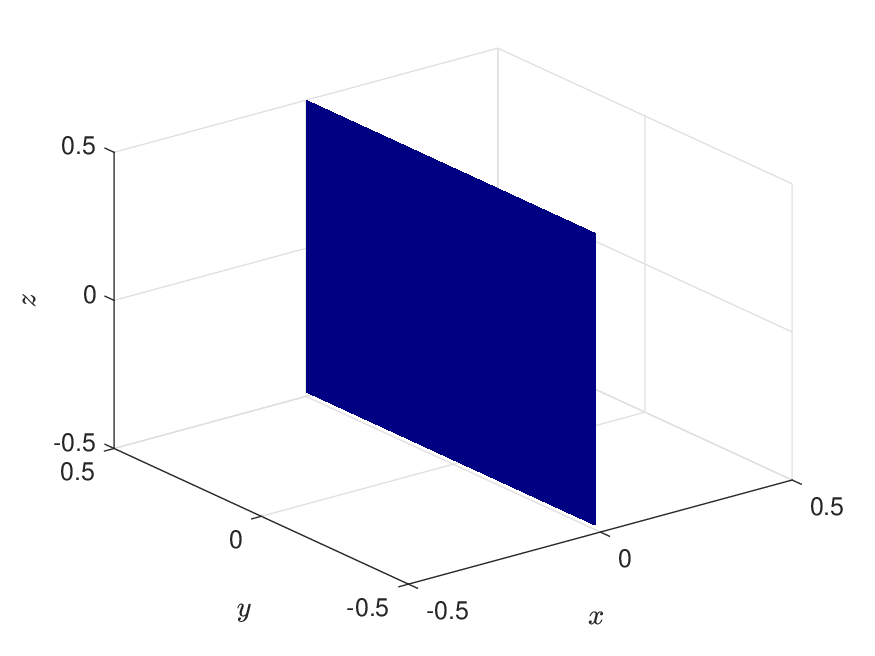}
\end{minipage}%
}%
\subfigure[slices ($y=0$)]
{
\begin{minipage}[t]{0.23\linewidth}
\centering
\includegraphics[width=1.3in]{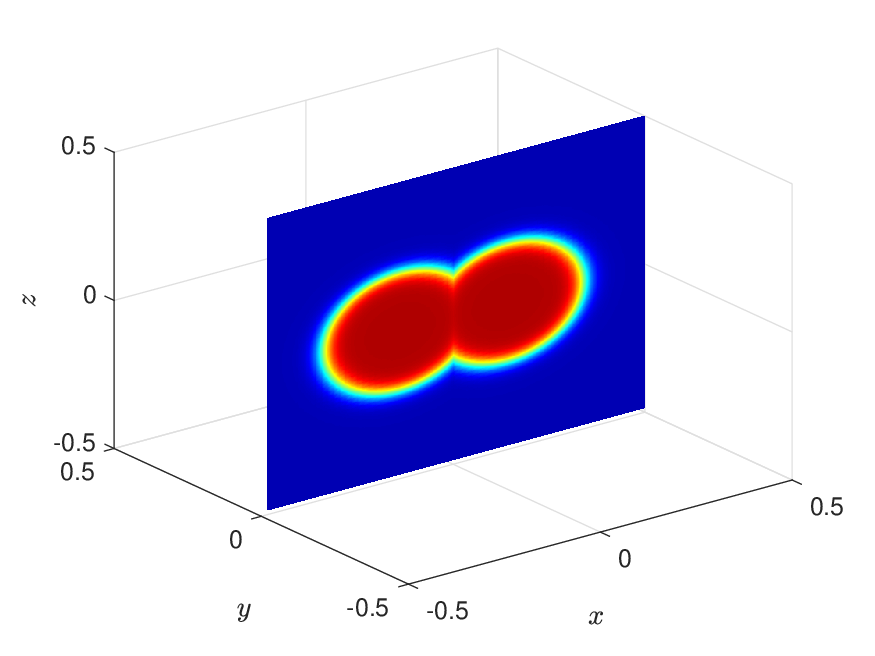}
\includegraphics[width=1.3in]{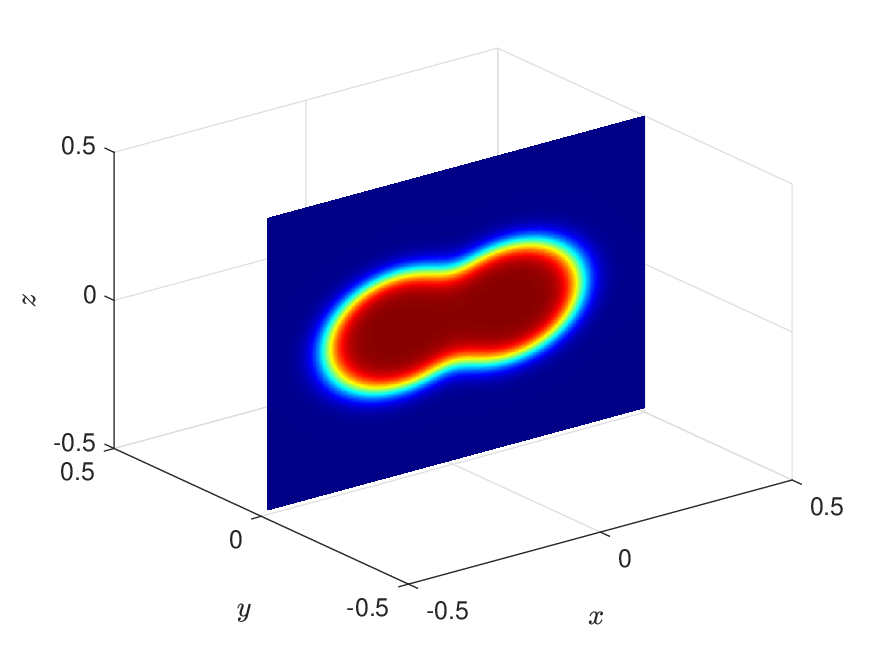}
\includegraphics[width=1.3in]{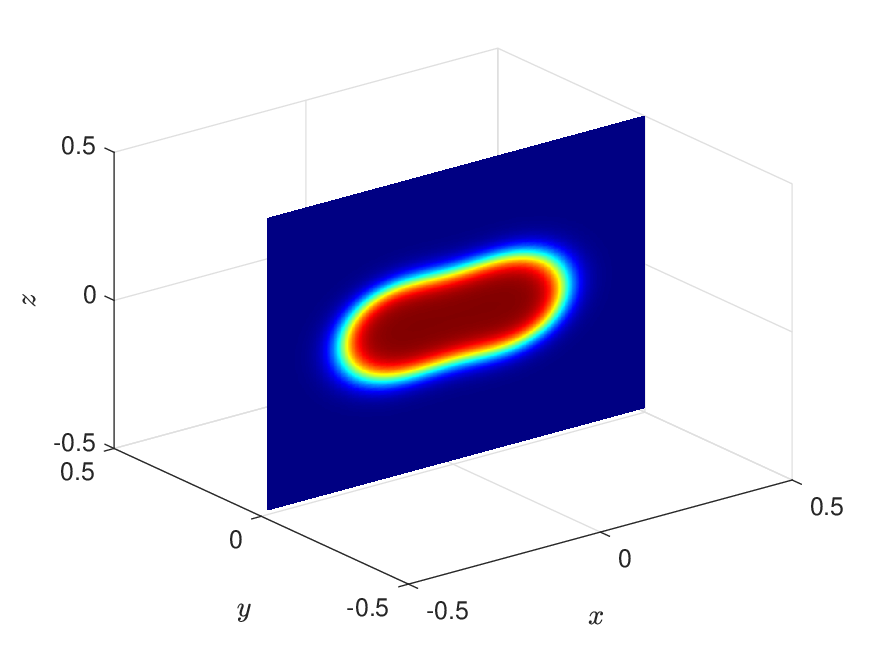}
\includegraphics[width=1.3in]{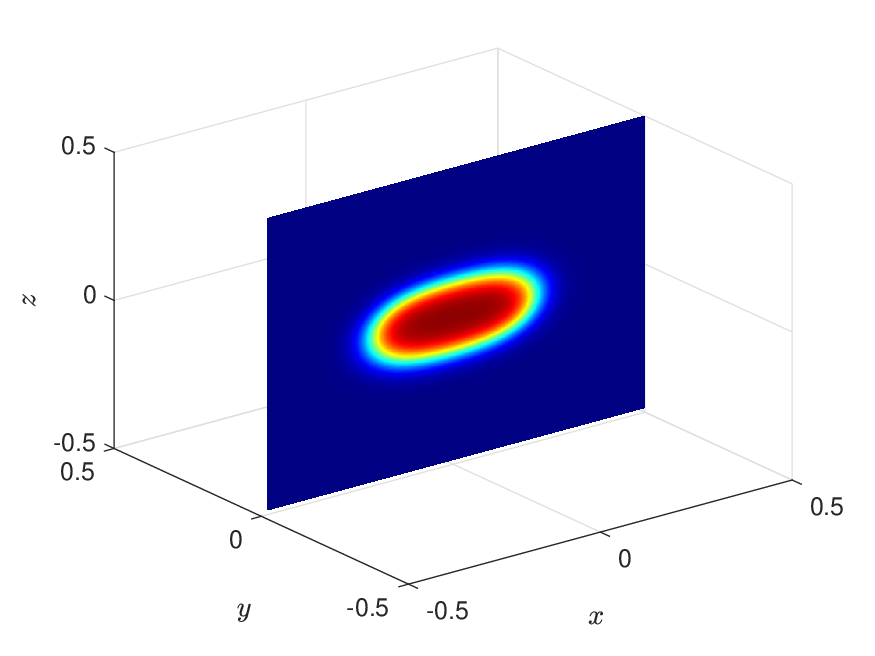}
\includegraphics[width=1.3in]{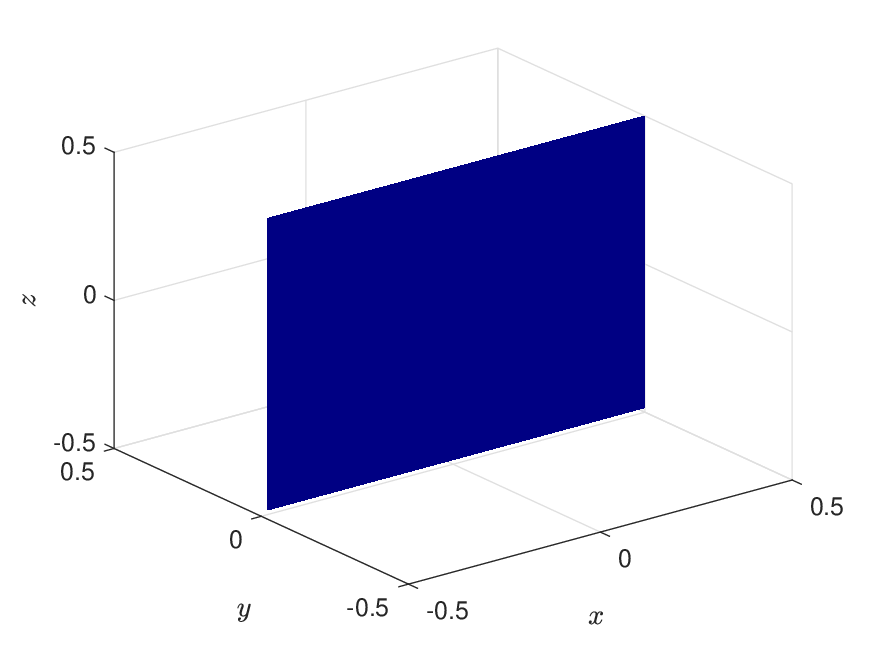}
\end{minipage}%
}%
\subfigure[slices ($z=0$)]
{
\begin{minipage}[t]{0.23\linewidth}
\centering
\includegraphics[width=1.3in]{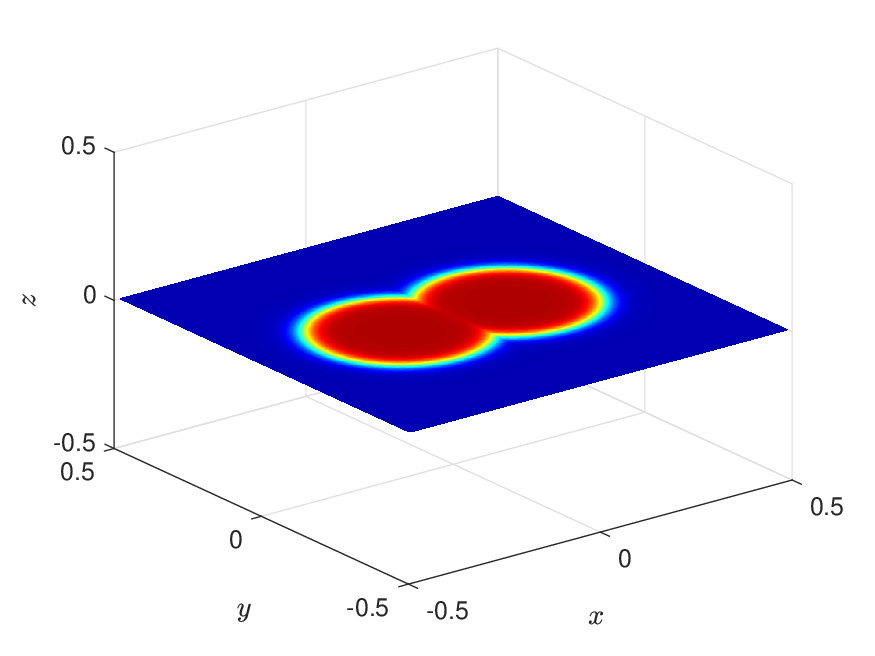}		
\includegraphics[width=1.3in]{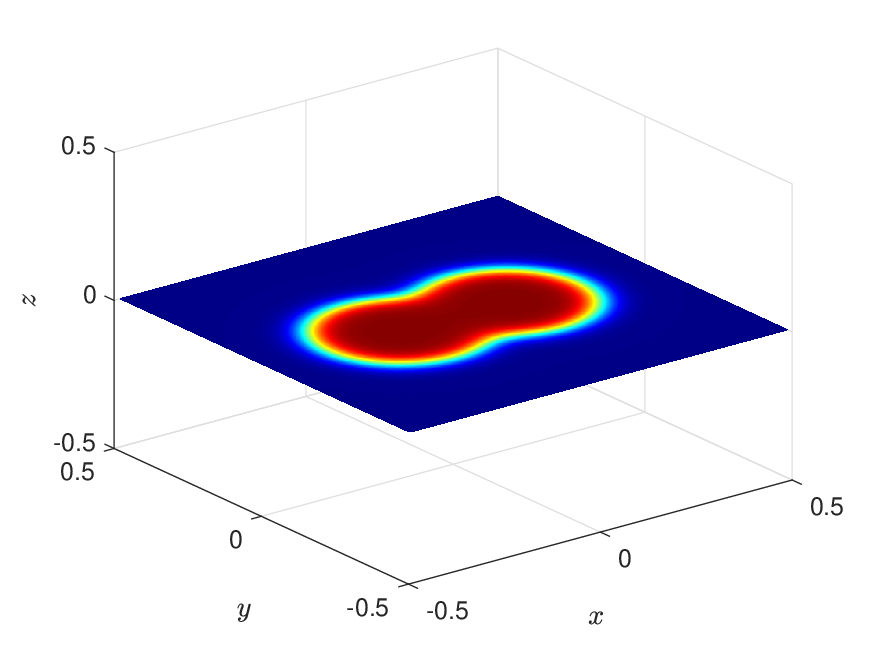}
\includegraphics[width=1.3in]{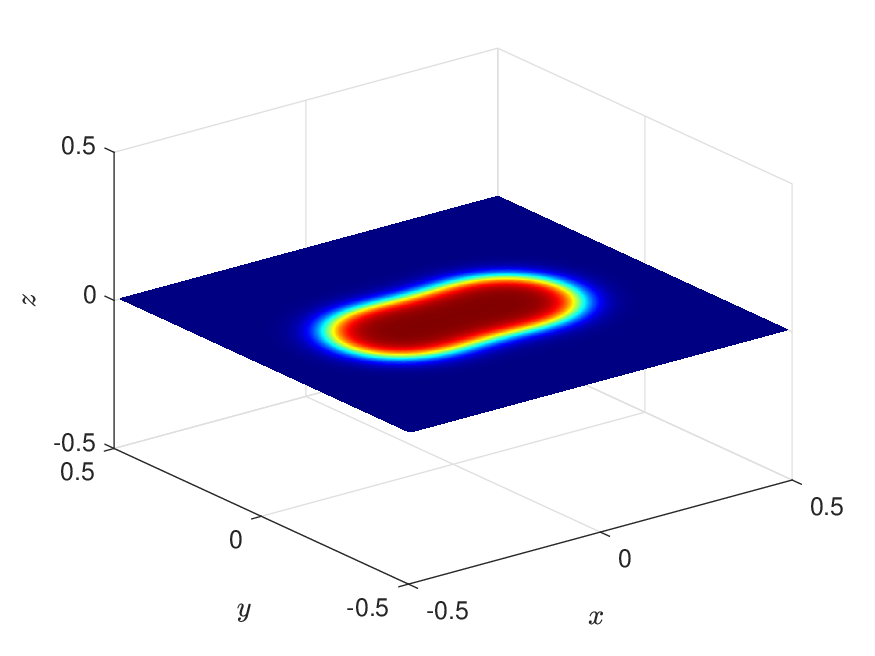}
\includegraphics[width=1.3in]{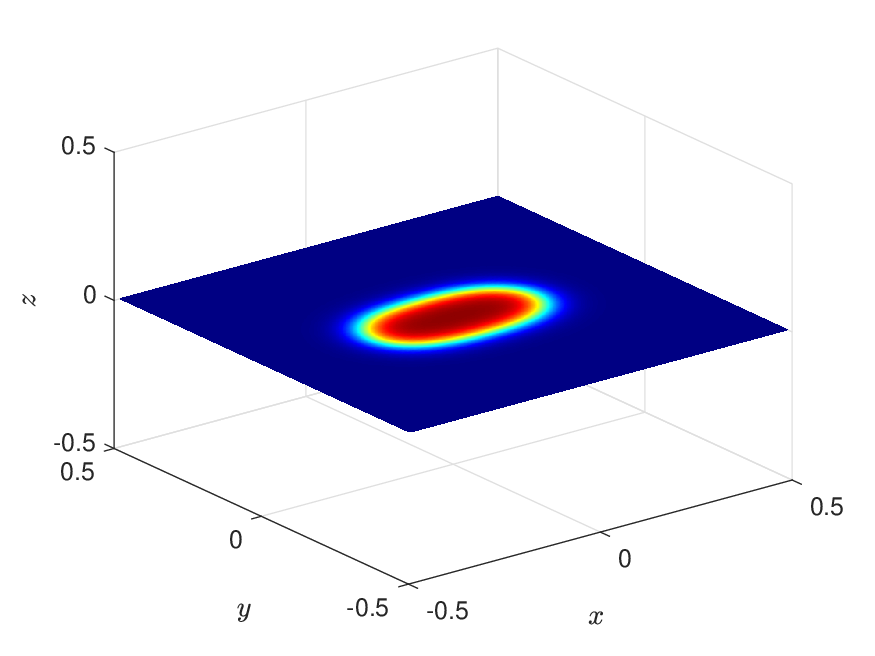}
\includegraphics[width=1.3in]{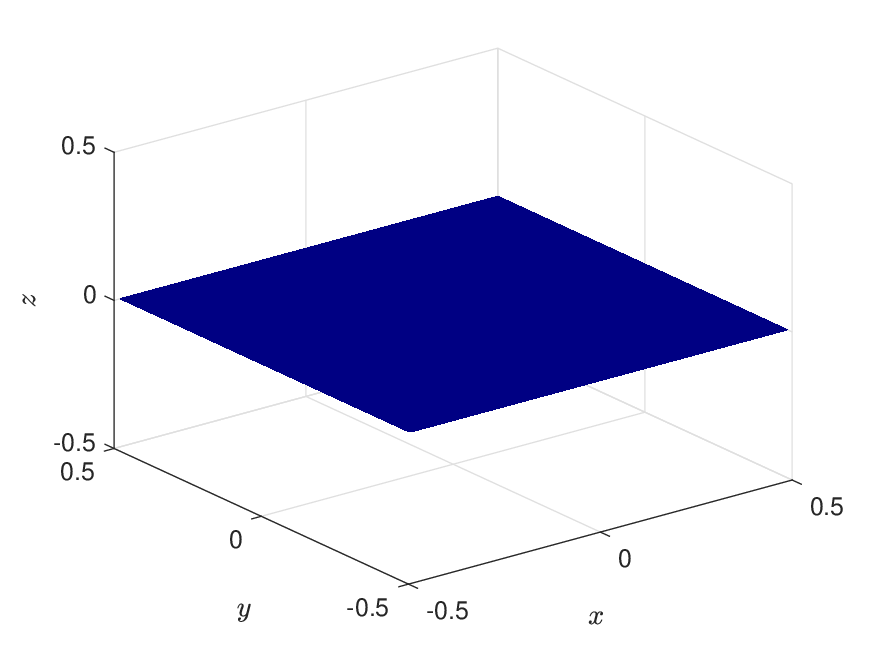}
\end{minipage}%
}
\setlength{\abovecaptionskip}{0.0cm}
\setlength{\belowcaptionskip}{0.0cm}
\caption{\small Plots of the iso-surfaces (value 0), and corresponding slices of the numerical solution $\phi$ at different time instants $ t = 0, 1.01, 5.01, 10.01, 20.08 $ (from top to bottom) obtained by the adaptive DsCN scheme: $\mm(\phi) =1$.} \label{figEx5_1}
\end{figure}
\subsection{3D bubble merging}
Finally, we consider the three-dimensional (3D) Allen--Cahn model on the domain $ \Omega = (-0.5, 0.5)^{3} $, with $ \varepsilon = 0.03 $ and two types of mobility functions: $ \mm(\phi) = 1 $ and $ \mm(\phi) = 1 - \phi^{2} $. The adaptive DsCN scheme, equipped with stabilization parameter $ S_{2} = \left( S_{1}/4 + 3 \varepsilon^{2}/2h^2 \right)^{2} \approx 36.3561 $, is employed to simulate the 3D bubble merging with the following initial condition
$$
\phi_{\text{init}}( x, y, z ) = \max\{ \phi_{1}, \phi_{2} \}; \ \phi_{i} = 0.9 \tanh\Big( \f{ 0.2 - \sqrt{ ( x \pm 0.14 )^2 + y^2 + z^2 } }{ \varepsilon } \Big), ~ i=1,2.
$$

\begin{figure}[!t]
\vspace{-12pt}
\centering
\subfigure[iso-surfaces]
{
\begin{minipage}[t]{0.23\linewidth}
\centering
\includegraphics[width=1.3in]{Ex5_u_T0.eps}
\includegraphics[width=1.3in]{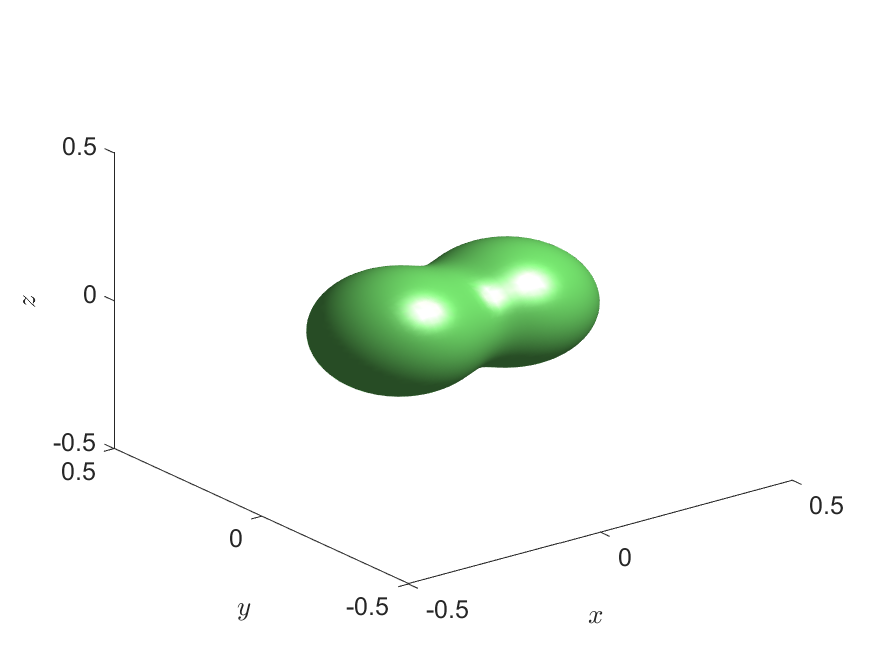}
\includegraphics[width=1.3in]{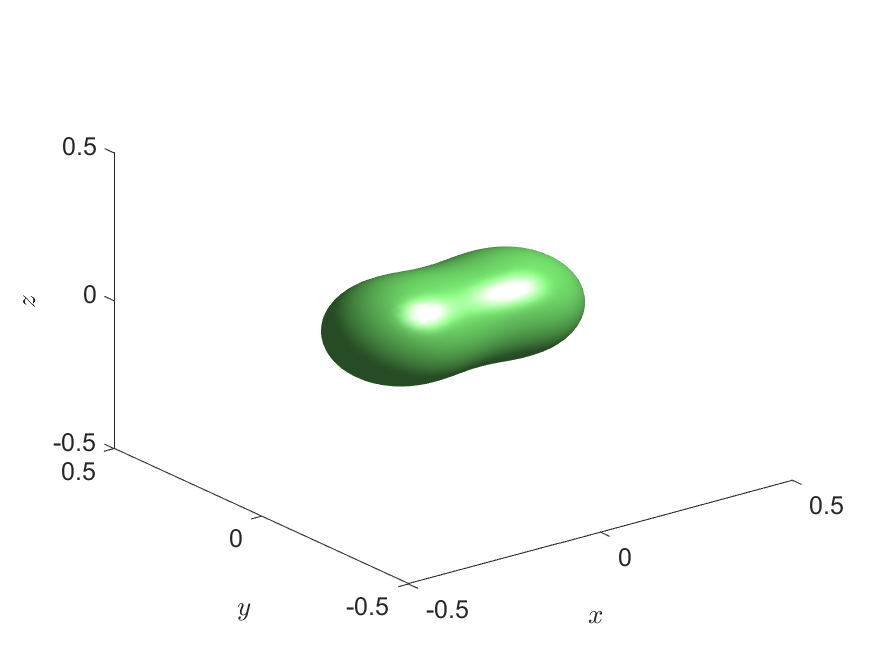}
\includegraphics[width=1.3in]{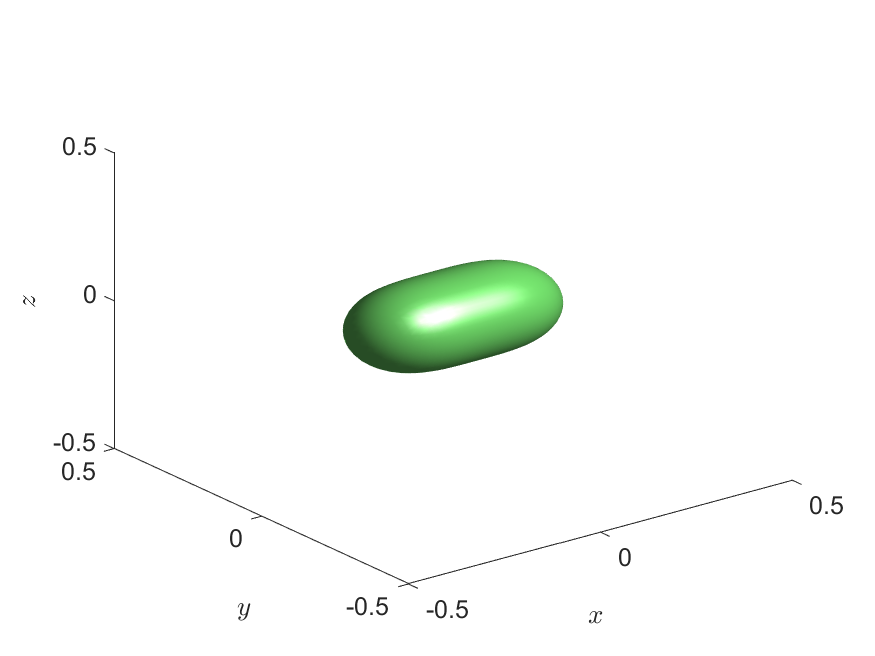}
\includegraphics[width=1.3in]{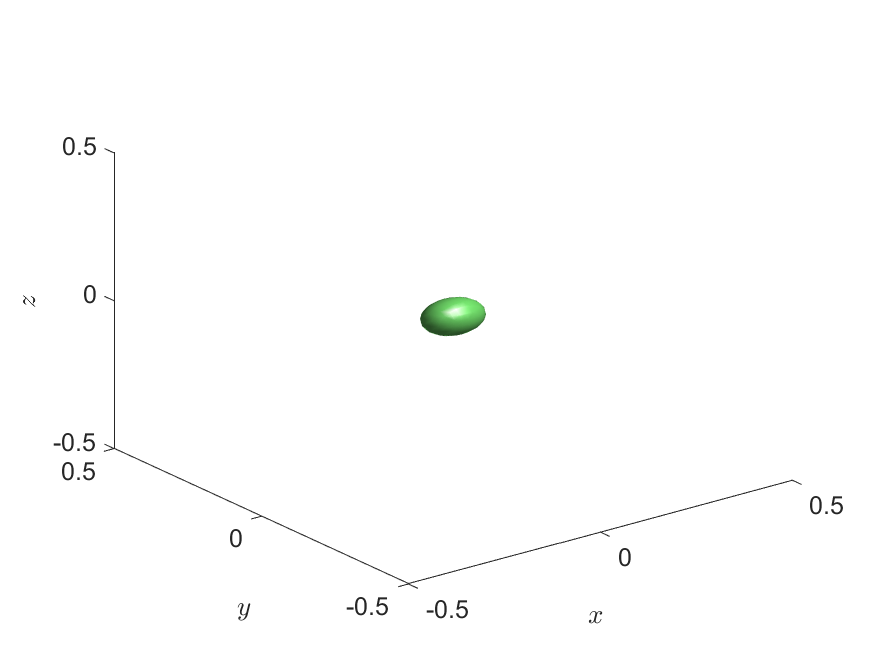}
\end{minipage}%
}%
\subfigure[slices ($x=0$)]
{
\begin{minipage}[t]{0.23\linewidth}
\centering
\includegraphics[width=1.3in]{u_x0_T0.eps}
\includegraphics[width=1.3in]{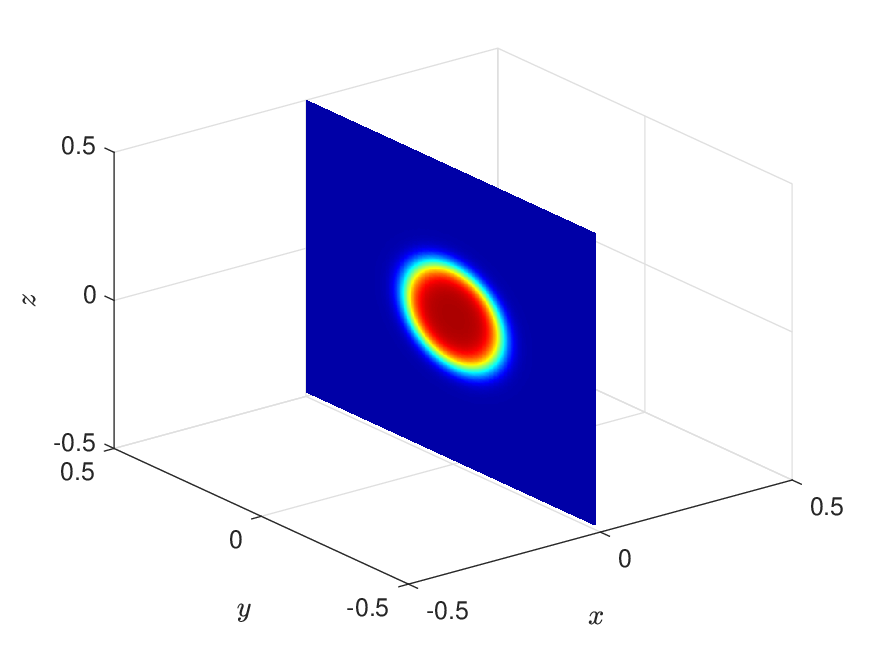}
\includegraphics[width=1.3in]{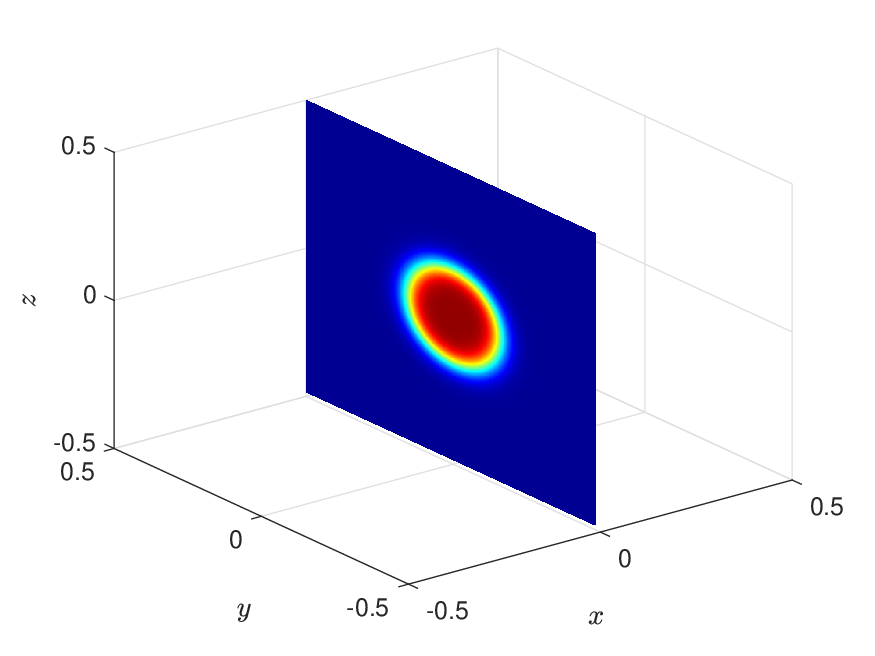}
\includegraphics[width=1.3in]{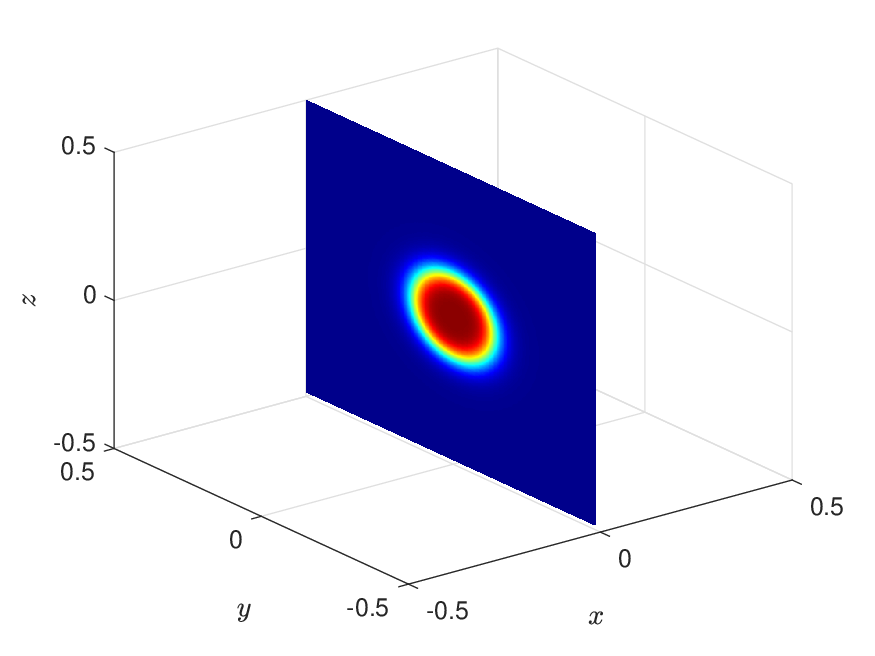}
\includegraphics[width=1.3in]{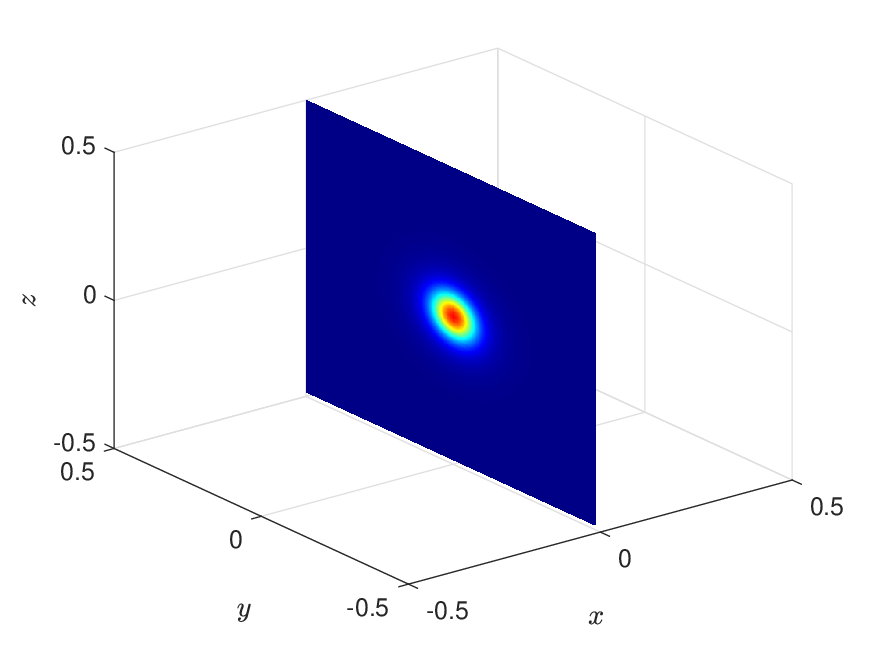}
\end{minipage}%
}%
\subfigure[slices ($y=0$)]
{
\begin{minipage}[t]{0.23\linewidth}
\centering
\includegraphics[width=1.3in]{u_y0_T0.eps}
\includegraphics[width=1.3in]{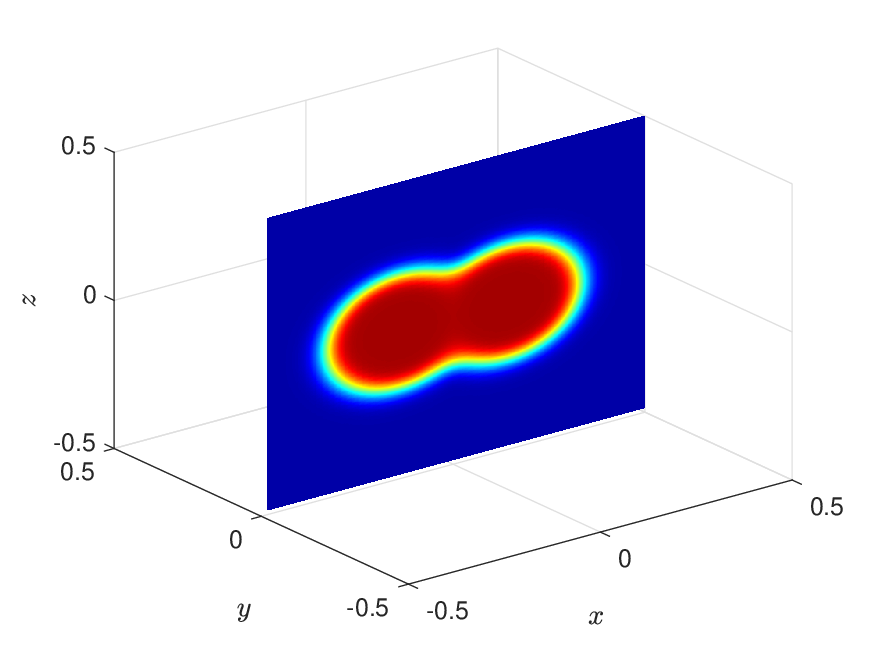}
\includegraphics[width=1.3in]{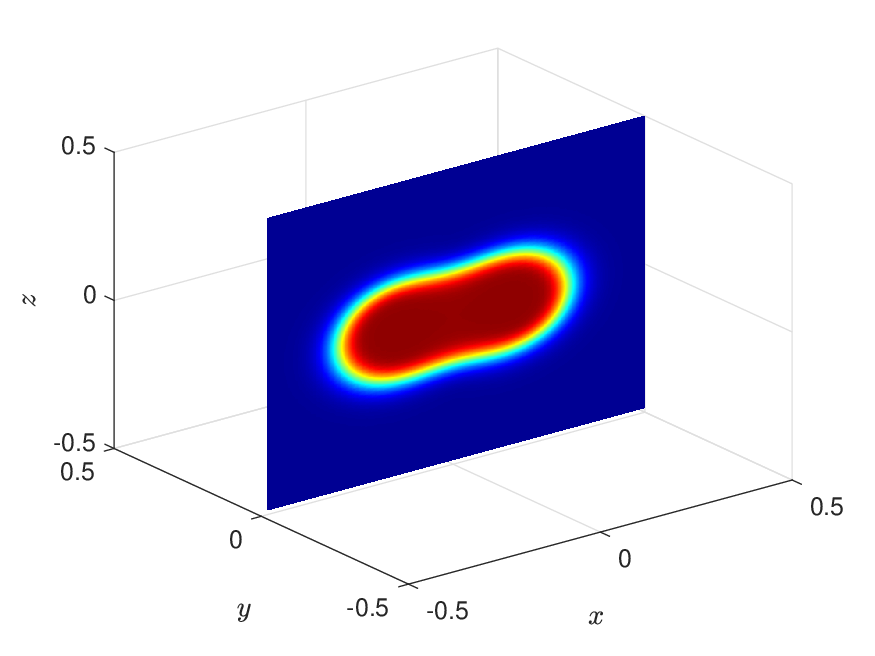}
\includegraphics[width=1.3in]{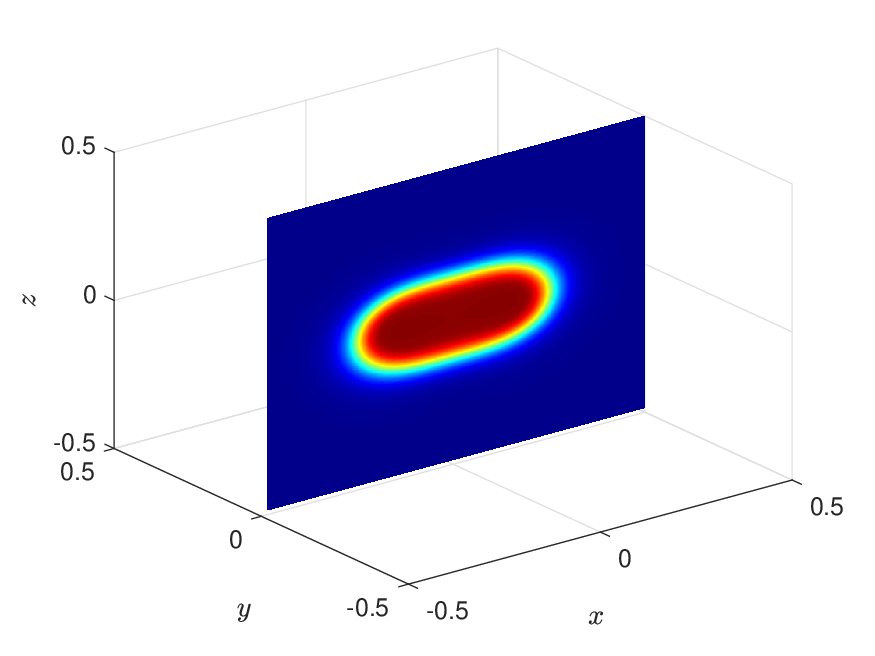}
\includegraphics[width=1.3in]{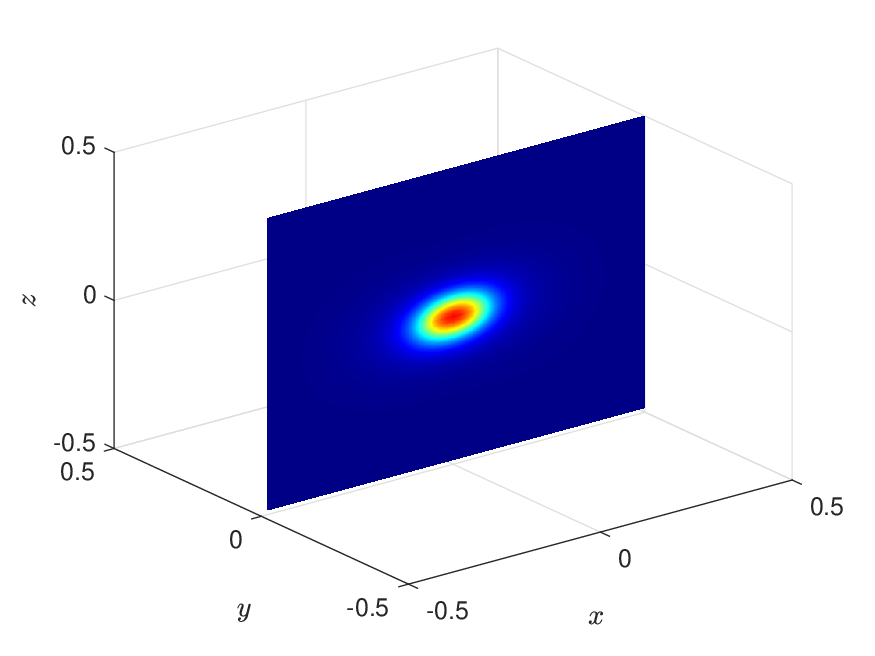}
\end{minipage}%
}%
\subfigure[slices ($z=0$)]
{
\begin{minipage}[t]{0.23\linewidth}
\centering
\includegraphics[width=1.3in]{u_z0_T0.eps}		
\includegraphics[width=1.3in]{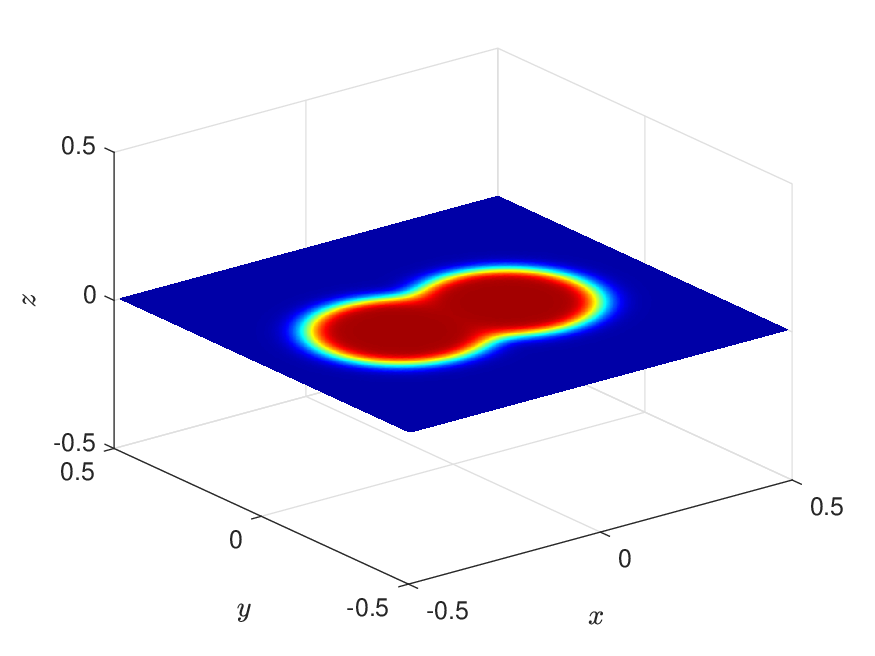}
\includegraphics[width=1.3in]{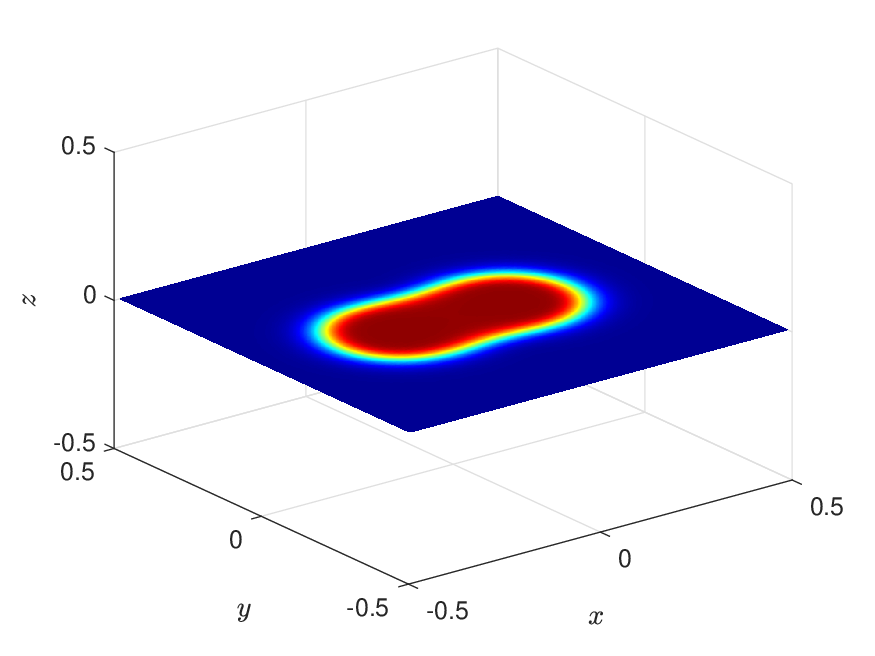}
\includegraphics[width=1.3in]{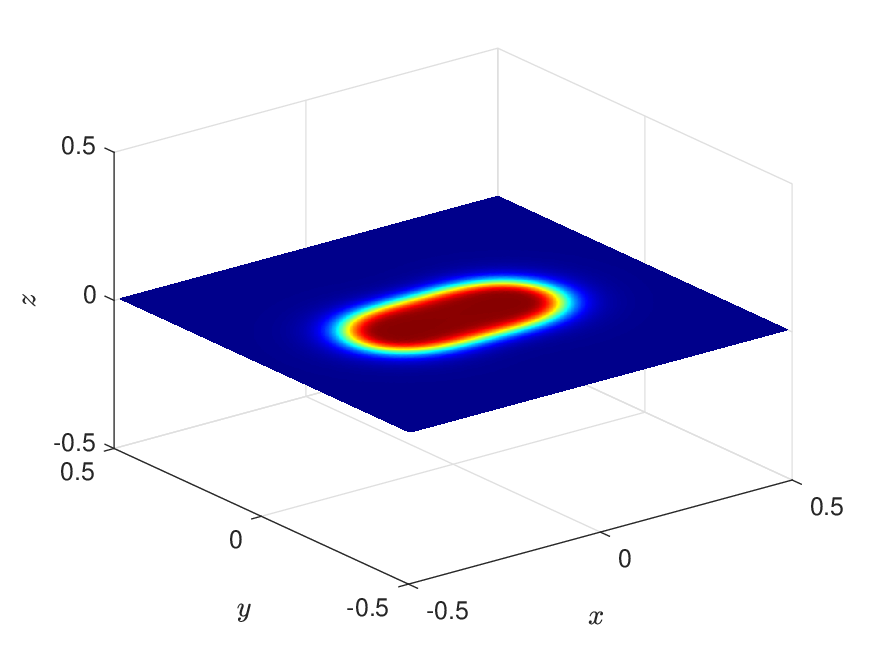}
\includegraphics[width=1.3in]{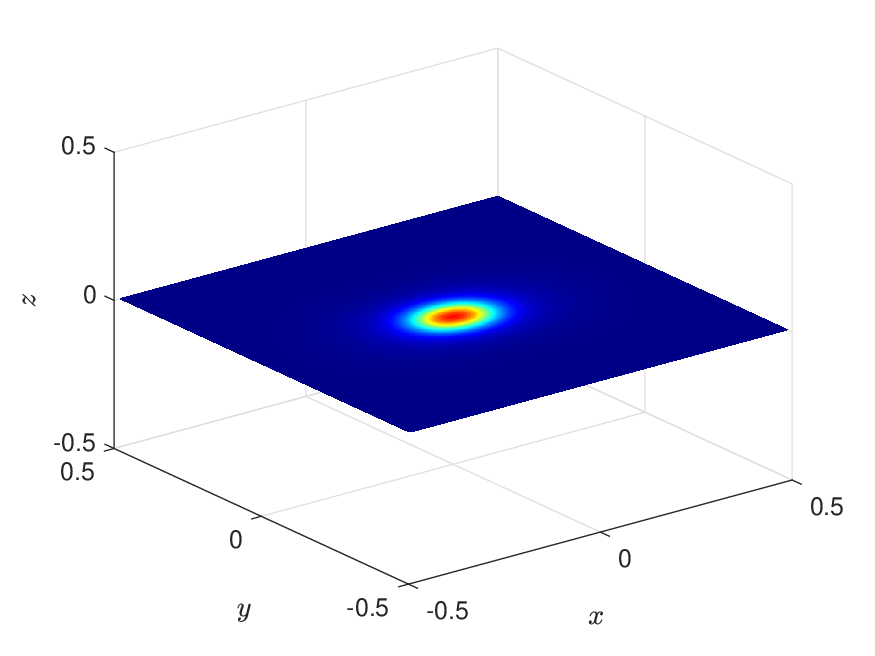}
\end{minipage}%
}%
\setlength{\abovecaptionskip}{0.0cm}
\setlength{\belowcaptionskip}{0.0cm}
\caption{\small Plots of the iso-surfaces (value $0$), and corresponding slices of the numerical solution $\phi$ at different time instants $ t = 0, 1, 5.02, 10.02, 20.02 $ (from top to bottom) obtained by the adaptive DsCN scheme: $\mm(\phi) = 1 - \phi^{2}$.} \label{figEx5_2}
\end{figure}
In this test, the adaptive parameters in \eqref{Alg1:adaptive} are selected as $ \tau_{\max} = 0.1$, $\tau_{\min} = 0.01 $ and $ \alpha = 10^{7} $. Spatial discretization is performed using a uniform mesh with $ 64 \times 64 \times 64 $ grid points. The iso-surfaces (value $0$) and corresponding slices of the numerical solution $\phi$ at different time instants are presented in Figure \ref{figEx5_1} for the constant mobility and in Figure \ref{figEx5_2} for the degenerate mobility. These results clearly demonstrate that: (i) the two initial bubbles gradually shrink and merge into one smaller single bubble; and (ii) the evolution process proceeds more slowly under the degenerate mobility setting than under the constant mobility, which is totally consistent with the two-dimensional case (see subsection \ref{sub:Eff_mobi}). Furthermore, as evidenced by the time evolution curves in Figure \ref{figEx5_3}, the scheme also preserves both the discrete energy stability and MBP for 3D simulation.

\begin{figure}[!t]
\vspace{-10pt}
\centering
\subfigure[maximum norm]
{
\includegraphics[width=0.4\textwidth]{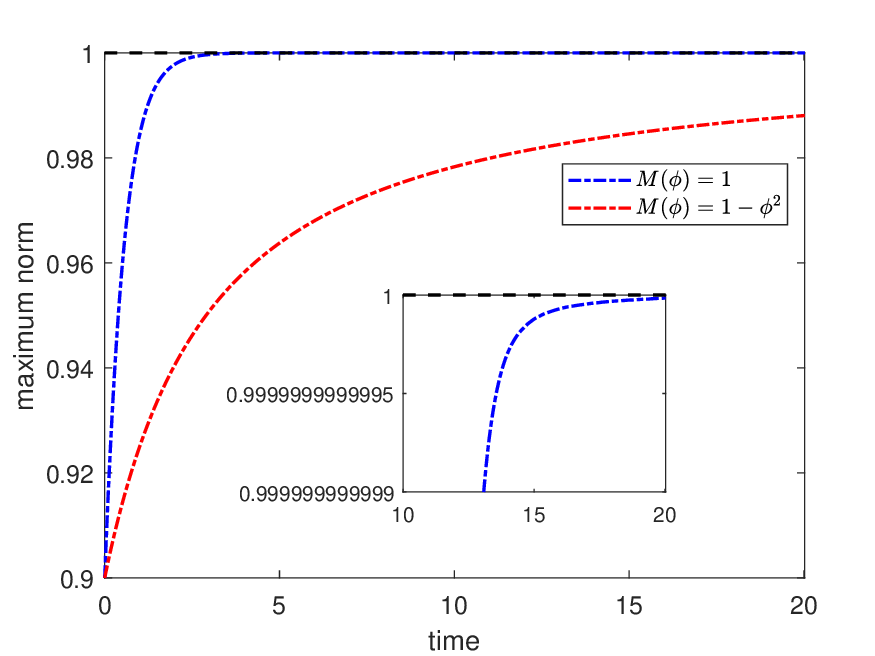}
}%
\subfigure[energy]
{
\includegraphics[width=0.4\textwidth]{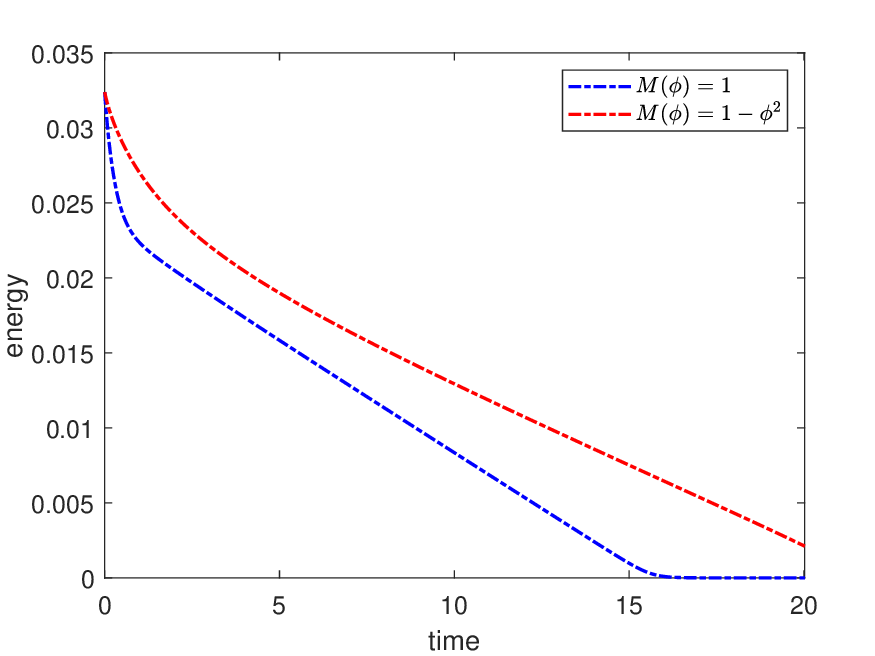}
}%
\setlength{\abovecaptionskip}{0.0cm}
\setlength{\belowcaptionskip}{0.0cm}
\caption{\small The maximum norm and energy of simulated solutions computed by the DsCN scheme for different mobilities.}
\label{figEx5_3}
\vspace{-8pt}
\end{figure}

\section{Concluding remarks}\label{sec:Conclu}
In this work, we introduce a dynamic stabilization term and put forward a first-order linear DsBE scheme for the Allen--Cahn equation with general mobility. We prove that this scheme unconditionally preserves the MBP and adheres to the original energy dissipation law. To obtain second-order, linear, energy-stable MBP-preserving schemes, we develop a novel prediction–correction framework that requires solving only a single linear system per time step, thereby reducing computational cost. Rigorous maximum-norm error estimates are established for both schemes. A notable distinction from prior studies \cite{CMS_2023_Cai,MOC_2023_Ju,ANM_2025_Hou} is that our theoretical results remain valid even in the presence of degenerate mobility, underscoring the robustness of the analysis. Finally, numerical experiments for the Allen--Cahn equation with various mobility laws corroborate the theoretical findings and demonstrate the strong performance of the proposed schemes. In particular, when coupled with an adaptive time-stepping strategy, the schemes efficiently resolve multiscale dynamics without compromising accuracy, making them well-suited for long-time simulations.


\bibliographystyle{amsplain}
\bibliography{Ref_MAC_CN}


\end{document}